

\documentclass[reqno]{amsart}


\usepackage{amsmath}
\usepackage{amssymb}
\usepackage{amsfonts}
\usepackage{amsthm}
\usepackage[foot]{amsaddr}
\usepackage{mathtools}
\mathtoolsset{%
}

\usepackage[utf8]{inputenc}
\usepackage[T1]{fontenc}


\usepackage[sf,mono=false]{libertine}



\usepackage{dsfont}



\usepackage[%
cal=cm,
]
{mathalfa}

\usepackage[dvipsnames,svgnames]{xcolor}
\colorlet{MyBlue}{DodgerBlue!75!Black}
\colorlet{MyGreen}{DarkGreen!85!Black}

\usepackage[font=small,labelfont=bf]{caption}
\usepackage{subfigure}
\usepackage{tikz}
\usetikzlibrary{calc}

\usepackage{acronym}
\usepackage{latexsym}
\usepackage{paralist}
\usepackage{wasysym}
\usepackage{xspace}

\usepackage[multiple]{footmisc}

\usepackage[sort&compress]{natbib}

\newcommand{\citegen}[2][]{\citeauthor{#2}'s \textup(\citeyear[#1]{#2}\textup)}

\usepackage{hyperref}
\hypersetup{
draft=false,
colorlinks=true,
linktocpage=true,
pdfstartview=FitH,
breaklinks=true,
pdfpagemode=UseNone,
pageanchor=true,
pdfpagemode=UseOutlines,
plainpages=false,
bookmarksnumbered,
bookmarksopen=false,
bookmarksopenlevel=1,
hypertexnames=true,
pdfhighlight=/O,
urlcolor=MyBlue!60!black,linkcolor=MyBlue!70!black,citecolor=DarkGreen!70!black, 
pdftitle={},
pdfauthor={},
pdfsubject={},
pdfkeywords={},
pdfcreator={pdfLaTeX},
pdfproducer={LaTeX with hyperref}
}


%

\numberwithin{equation}{section}  
\usepackage[sort&compress,capitalize,nameinlink]{cleveref}

\crefrangeformat{equation}{\upshape(#3#1#4)\textendash(#5#2#6)}




\newcommand{\R}{\mathbb{R}}


\DeclareMathOperator*{\argmax}{arg\,max}
\DeclareMathOperator*{\argmin}{arg\,min}
\DeclareMathOperator{\bd}{bd}

\DeclareMathOperator{\cl}{cl}
\DeclareMathOperator{\diag}{diag}

\DeclareMathOperator{\dist}{dist}
\DeclareMathOperator{\dom}{dom}

\DeclareMathOperator{\grad}{grad\hspace{-1pt}}

\DeclareMathOperator{\hess}{Hess}
\DeclareMathOperator{\im}{im}

\DeclareMathOperator{\intr}{int}

\DeclareMathOperator{\proj}{proj}

\DeclareMathOperator{\range}{range}

\DeclareMathOperator{\supp}{supp}

\newcommand{\argdot}{\mathopen{}\cdot\mathopen{}}

\newcommand{\bvec}{e}

\newcommand{\onerow}{\mathds{1}}
\newcommand{\onecol}{\mathbf{1}}

\newcommand{\dd}{\:d}

\newcommand{\from}{\colon}

\newcommand{\open}{U}
\newcommand{\orthant}{\clorthant^{\circ}}
\newcommand{\clorthant}{\mathcal{K}}
\newcommand{\pd}{\partial}

\newcommand{\simplex}{\Delta}

\newcommand{\wilde}{\widetilde}


\DeclarePairedDelimiter{\bracks}{[}{]}

\DeclarePairedDelimiter{\pospart}{[}{]_{+}}

\DeclarePairedDelimiter{\abs}{\lvert}{\rvert}
\DeclarePairedDelimiter{\norm}{\lVert}{\rVert}

\DeclarePairedDelimiterX{\braket}[2]{\langle}{\rangle}{#1\mathopen{}\delimsize\vert\mathopen{}#2}
\DeclarePairedDelimiterX{\product}[2]{\langle}{\rangle}{#1,#2}
\DeclarePairedDelimiterX{\setdef}[2]{\{}{\}}{#1:#2}


\newcommand{\txs}{\textstyle}
\newcommand{\textpar}[1]{\textup(#1\textup)}

\newcommand{\insum}{\sum\nolimits}

\usepackage[textwidth=30mm]{todonotes}
\usepackage{soul}
\setstcolor{red}
\sethlcolor{SkyBlue}

\newcommand{\ie}{i.e.,\xspace}
\newcommand{\eg}{e.g.,\xspace}


\theoremstyle{plain}
\newtheorem{theorem}{Theorem}

\newtheorem*{corollary*}{Corollary}
\newtheorem{lemma}[theorem]{Lemma}
\newtheorem{proposition}[theorem]{Proposition}

\newtheorem{observation}[theorem]{Observation}

\theoremstyle{definition}

\newtheorem*{definition*}{Definition}

\theoremstyle{remark}
\newtheorem{remark}{Remark}
\newtheorem*{remark*}{Remark}
\newtheorem{example}{Example}
\newtheorem*{example*}{Example}


\numberwithin{equation}{section}
\numberwithin{theorem}{section}
\numberwithin{remark}{section}
\numberwithin{example}{section}


\newcommand{\pure}{\alpha}
\newcommand{\purealt}{\beta}
\newcommand{\purealtalt}{\gamma}
\newcommand{\nPures}{n}
\newcommand{\pures}{\mathcal{A}}

\newcommand{\payv}{v}
\newcommand{\strat}{\mathcal{X}}
\newcommand{\game}{\mathcal{G}}
\newcommand{\gamefull}{\game(\pures,\payv)}

\newcommand{\eq}{x^{\ast}}
\newcommand{\bareq}{\bar x^{\ast}}
\newcommand{\olimit}{x^{\omega}}

\newcommand{\domain}{\mathcal{D}}

\newcommand{\dynfield}{V}
\newcommand{\pot}{f}

\newcommand{\Hop}{M}
\newcommand{\HopFcn}{M}

\newcommand{\hessmat}{H}
\newcommand{\zproj}{\Phi}

\newcommand{\eigvec}{u}
\newcommand{\ttop}{{\!\top\!}}

\newcommand{\eye}{I}
\newcommand{\identity}{\eye}
\newcommand{\diagidentity}{\diag(1,\dotsc,1)}

\DeclareMathOperator{\cost}{C}
\DeclareMathOperator{\gain}{G}

\newcommand{\scprod}{g}


\newcommand{\base}{x^{\ast}}
\newcommand{\notbase}{x}

\newcommand{\vecspace}{W}
\newcommand{\dspace}{\vecspace^{\ast}}

\newcommand{\breg}{D_{h}}

\newcommand{\legendre}{y}

\newcommand{\dkl}{D_{\textup{KL}}}
\newcommand{\deucl}{D_{\textup{Eucl}}}
\newcommand{\good}{\domain}

\newcommand{\unit}{\mathcal{U}}

\newcommand{\choice}{Q}
\newcommand{\widebar}{\bar}
\newcommand{\Lyap}{L}
\newcommand{\rate}{\phi}

\newcommand{\choicef}{Q_\clorthant}

\DeclareMathOperator{\tproj}{\Pi}

\DeclareMathOperator{\tcone}{TC}
\DeclareMathOperator{\tspace}{T}
\DeclareMathOperator{\ncone}{NC}

\newcommand{\zspace}{\R_{0}}
\newcommand{\metcone}{\textrm{Adm}_{g}}
\newcommand{\domg}{\dom g}

\newcommand{\normal}{n}
\newcommand{\intstrat}{\strat^{\circ}}

\DeclarePairedDelimiter{\class}{[}{]}

\newcommand{\wt}{\phi}

\DeclareMathOperator{\vspan}{span}

\newcommand{\undx}{z}
\newcommand{\hh}{h_{\Undx}}
\newcommand{\Undx}{\mathcal{Z}}
\newcommand{\intUndx}{\Undx^{\circ}}




\begin{document}
\allowdisplaybreaks


\title{Riemannian Game Dynamics}

\author[P.~Mertikopoulos]{Panayotis Mertikopoulos$^{\ast}$}
\address
{$^{\ast}$Univ. Grenoble Alpes, CNRS, Inria, LIG, F-38000 Grenoble, France.}
\email{\href{mailto:panayotis.mertikopoulos@imag.fr}{panayotis.mertikopoulos@imag.fr}}

\author[W.~H.~Sandholm]{William H. Sandholm$^{\S}$}
\address
{\quad$^{\S}$\hspace{.5pt}Department of Economics, University of Wisconsin, 1180 Observatory Drive, Madison WI 53706, USA.}
\email{\href{mailto:whs@ssc.wisc.edu}{whs@ssc.wisc.edu}}

\thanks{%
We thank Josef Hofbauer and Dai Zusai for helpful discussions and comments, and we thank Marciano Siniscalchi and an anonymous referee for very thoughtful reports.
Part of this work was carried out during the authors' visit to the Hausdorff Research Institute for Mathematics at the University of Bonn in the framework of the Trimester Program ``Stochastic Dynamics in Economics and Finance''.
PM is grateful for financial support from
the French National Research Agency (ANR) under grant no.~ANR\textendash GAGA\textendash13\textendash JS01\textendash 0004\textendash 01
and
the CNRS under grant no. PEPS\textendash REAL.net\textendash 2016.
WHS is grateful for financial support under NSF Grants SES\textendash1155135, SES\textendash1458992, and SES-1728853 and ARO Grant MSN201957.}


\newcommand{\acdef}[1]{\textit{\acl{#1}} \textup{(\acs{#1})}\acused{#1}}
\newcommand{\acdefp}[1]{\emph{\aclp{#1}} \textup(\acsp{#1}\textup)\acused{#1}}

\newacro{1SL}{one-sided Lipschitz}
\newacro{ESS}{evolutionarily stable state}
\newacro{GESS}{globally evolutionarily stable state}
\newacro{ODE}{ordinary differential equation}
\newacro{HR}{Hess\-i\-an Rie\-man\-ni\-an}
\newacro{KKT}{Ka\-rush\textendash Kuhn\textendash Tuc\-ker}
\newacro{RPS}{Rock-Paper-Scissors}
\newacro{MP}{Matching Pennies}
\newacro{KL}{Kull\-back\textendash Le\-ib\-ler}
\newacro{lsc}[l.s.c.]{lower semi-continuous}
\newacro{LHS}{left-hand side}
\newacro{RHS}{right-hand side}
\newacro{NE}{Nash equilibrium}
\newacroplural{NE}[NE]{Nash equilibria}

\begin{abstract}

We study a class of evolutionary game dynamics defined by balancing a \emph{gain} determined by the game's payoffs against a \emph{cost of motion} that captures the difficulty with which the population moves between states.
Costs of motion are represented by a Riemannian metric, \ie a state-dependent inner product on the set of population states.
The replicator dynamics and the (Euclidean) projection dynamics are the archetypal examples of the class we study.
Like these representative dynamics, all Riemannian game dynamics satisfy certain basic desiderata, including positive correlation and global convergence in potential games.
Moreover, when the underlying Riemannian metric satisfies a Hessian integrability condition, the resulting dynamics preserve many further properties of the replicator and projection dynamics.
We examine the close connections between Hessian game dynamics and reinforcement learning in normal form games, extending and elucidating a well-known link between the replicator dynamics and exponential reinforcement learning.
\end{abstract}

\maketitle



\section{Introduction}
\label{sec:introduction}

Viewed abstractly, evolutionary game dynamics assign to every population game a dynamical system on the game's set of population states.
Under most such dynamics, the vector of motion at a given population state depends only on payoffs and behavior at that state, implying that changes in aggregate behavior are determined by current strategic conditions.
Such dynamics may thus be viewed as state-dependent rules for transforming current payoffs into feasible directions of motion.

In this paper, we introduce a family of evolutionary game dynamics under which the vector of motion $z$ from any state $x$ is obtained by balancing two forces.
The first, the \emph{gain from motion}, is obtained by adding the products of the strategies' payoffs at $x$ with their rates of change under $z$.
This quantity is the measure of agreement between payoffs and motion used in the standard monotonicity condition for game dynamics.%
\footnote{See \cite{Fri91}, \cite{Swi93}, \cite{San01}, \cite{DemRit03}, and condition \eqref{eq:PC} below.}
The second, the \emph{cost of motion}, captures the difficulty with which the population moves from state $x$ along vector $z$;
different specifications of these
costs define different members of our family of dynamics.
These costs are usefully represented by means of a \emph{Riemannian metric}, a state-dependent inner product used to evaluate lengths of and angles between vectors of motion.
Accordingly, the dynamics studied here, defined by maximizing differences between gains and costs, are called \emph{Riemannian game dynamics}.

The two archetypal examples of Riemannian game dynamics are the replicator dynamics \citep{TJ78} and the (Euclidean) projection dynamics \citep{NZ97}, both derived from fairly simple structures.
First, the replicator dynamics are derived from the \emph{Shahshahani metric} \citep{Sha79}, under which the cost of increasing a strategy's relative frequency in the population is inversely proportional to said frequency.
Second, the projection dynamics are obtained by measuring the cost of motion
in the standard Euclidean fashion, independently of the population's current state.
Other Riemannian metrics can be used in applications where different strategies have clear affinities, allowing the presence and performance of one strategy to positively influence the use of similar alternatives.

The metric's boundary behavior is the source of a fundamental dichotomy that is best explained by looking at our two prototypical examples above.
Under the replicator dynamics:
\begin{inparaenum}%
[\textup(\itshape i\textup)]
\item
the law of motion for every game is continuous;
\item
the set of utilized strategies remains constant along every solution trajectory;
and
\item
the dynamics' rest points are the restricted equilibria of the game \textendash\ the states at which all strategies in use earn the same payoff.
\end{inparaenum}
In contrast, under the Euclidean projection dynamics:
\begin{inparaenum}%
[\textup(\itshape i\textup)]
\item
the law of motion is typically discontinuous at the boundary of the simplex;
\item
the set of utilized strategies may change infinitely often along the same solution trajectory;
and
\item
the dynamics' rest points are the \aclp{NE} of the underlying game.
\end{inparaenum}
Based on this behavior, we obtain a natural distinction between \emph{continuous} and \emph{discontinuous Riemannian dynamics}, each category sharing the boundary behavior of its prototype.
In Section \ref{sec:examples}, we introduce a variety of examples of Riemannian dynamics from both classes;
then, in \cref{sec:protocols}, we show how these and other Riemannian dynamics can be provided with microfoundations using suitably constructed revision protocols.


A basic aim of our analysis is to demonstrate that many basic properties of the replicator and Euclidean projection dynamics extend to our substantially more general setting.
In Section \ref{sec:analysis}, we show that Riemannian dynamics satisfy the basic desiderata for evolutionary game dynamics:
they heed a payoff monotonicity condition known as \emph{positive correlation}, and they converge globally
in the class of potential games.
In the latter context, Riemannian game dynamics also provide a broad generalization of \emph{Kimura's maximum principle} \citep{Kim58,Sha79}.
This principle states that when agents are matched to play a normal form common interest game, the replicator dynamics move in the direction of maximal increase in average payoffs, provided that lengths of displacement vectors are evaluated using the Shahshahani metric.
Extending this principle, we observe that Riemannian dynamics track the direction of
steepest ascent of potential
in any potential game,
provided
that displacements
are evaluated using the Riemannian metric at hand.

Obtaining further results on stability, convergence, and global behavior requires additional structure on our dynamics \textendash\ and hence on the underlying Riemannian metric.
This structure is provided by an \emph{integrability condition}.
In prior work on game dynamics, such conditions have been imposed on the \emph{vector fields} used to convert the strategies' payoffs into vectors of choice probabilities.%
\footnote{See \cite{HMC01b}, \cite{HS07}, and \cite{San10c}.}
By contrast, the integrability condition employed here is imposed on the \emph{matrix field} that defines a Riemannian metric,
requiring that it be expressible as the Hessian of a convex function.
We call this function the \emph{potential} of the metric, and we refer to the resulting dynamics as \emph{Hessian game dynamics}.%
\footnote{In the context of convex programming, gradient flows generated by \ac{HR} metrics of this sort have been explored at depth by \cite{BT03}, \cite{ABB04}, \cite{MS18}, and many others.
\cite{LM15} also examine the long-term rationality properties of a class of second-order, \emph{inertial} game dynamics derived from \ac{HR} metrics.}
Both the replicator dynamics and the Euclidean projection dynamics are members of this class.
As we explain in \cref{sec:HD}, Hessian dynamics are continuous when their potential function becomes infinitely steep at the boundary of the simplex,
leading to the distinction between \emph{continuous} and \emph{discontinuous} Hessian dynamics.

The key tool that we employ for the analysis of Hessian dynamics is the \emph{Bregman divergence} \citep{Bre67}, an asymmetric measure of the ``remoteness'' of a given population state from any fixed target state.%
\footnote{In the Shahshahani case, this boils down to the \acl{KL} divergence, which has seen wide use in the analysis of the replicator dynamics \citep{Wei95,HS98}.}
By using the Bregman divergence as a Lyapunov function, we prove global convergence to \acl{NE} in strictly contractive games and local stability of \aclp{ESS} under Hessian game dynamics.
We also show that certain distinctive properties of the replicator dynamics in normal form games extend to \emph{all} continuous Hessian dynamics \textendash\ in particular, the convergence of time averages of interior solutions to the set of Nash equilibria, and the existence of simple sufficient conditions for permanence.
Finally, we show that strictly dominated strategies are eliminated under continuous Hessian dynamics, a conclusion which does not extend to the discontinuous regime.%
\footnote{See \cite{SDL08} and Section \ref{sec:dominated}.}

\subsubsection*{Related work}

There are very close connections between the dynamics considered here and dynamics studied by \cite{HS90}, \cite{Hop99b}, and \cite{Har11}.
In order to have the machinery in place to make these connections clear, we postpone this discussion until Section \ref{sec:previous}.

There is a more surprising connection between Hessian dynamics and models of reinforcement learning in normal form games.
\cite{Rus99}, \cite{HSV09} and \cite{MM10} show that if players track the cumulative payoffs (or \emph{scores}) of their strategies and choose mixed strategies at each instant by applying the logit choice rule to these scores, the evolution of mixed strategies is described by the replicator dynamics.%
\footnote{For related results, see also \cite{BS97}, \cite{Pos97}, and \cite{Hop02}.}
Combining our analysis here with that
of
\cite{MS16}, we show that Hessian dynamics derived from a steep potential function also describe the evolution of mixed strategies under reinforcement learning. 
In addition to substantially generalizing existing results, our analysis provides an intuitive explanation for the tight links between the two processes. 
Section \ref{sec:RL} describes these and other connections between Hessian dynamics and reinforcement learning in detail.

\section{Population games and evolutionary dynamics}
\label{sec:prelims}


\subsection*{Notation}
\label{sec:notation}

Let $\pures = \{\pure_{1},\dotsc,\pure_{\nPures}\}$ be a finite set.
The real space spanned by $\pures$ will be denoted by $\R^{\pures}$ and we will write $\delta_{\pure\purealt}$ for the Kronecker deltas on $\pures$.
We will also write $\clorthant \equiv \R_{+}^{\pures}$ for the nonnegative orthant of $\R^{\pures}$, $\orthant \equiv \R_{++}^{\pures}$ for its interior (the positive orthant),
and
$\zspace^{\pures} = \setdef{z\in\R^{\pures}}{\insum_{\pure} z_{\pure} = 0}$ for the subspace of vectors whose components sum to zero.
Finally, in a slight abuse of notation, we will write $\R^{\supp(x)}= \setdef{z\in\R^{\pures}}{z_\pure = 0 \text{ whenever } x_\pure =0}$ for the set of vectors in $\R^{\pures}$ whose support is contained in the support of $x\in\R^{\pures}$.

\subsection{Population games}
\label{sec:games}

Throughout this paper we focus on games played by a population of nonatomic agents.
Our analysis extends to the multi-population setting without significant effort, but we focus on single-population games for simplicity and notational clarity. 

During play, each agent chooses an \emph{action} (or \emph{pure strategy}) from a finite set $\pures$, and their payoff is determined by their choice of action and by the proportions $x_{\pure}\in[0,1]$ of the population playing each action $\pure\in\pures$.
Collectively, these proportions define a \emph{population state} $x = (x_{\pure})_{\pure\in\pures}\in\R^{\pures}$,
and we write $\strat = \simplex(\pures)=\setdef{x\in\R^{\pures}_+}{\sum_\pure x_\pure =1}$ for the set of population states (or \emph{state space}) of the game.
The payoff to an agent playing $\pure\in\pures$ when the population state is $x\in\strat$ is given by an associated \emph{payoff function} $\payv_{\pure}\from \strat\to\R$, which we assume to be Lipschitz continuous.
Putting all this together, a \emph{population game} may be identified with a set of actions and their associated payoff functions, and will be denoted by $\game \equiv \game(\pures,\payv)$.

A population state $\eq\in\strat$ is a \acdef{NE} of a population game $\game$ if
\begin{equation}
\label{eq:Nash}
\tag{NE}
\payv_{\pure}(\eq)
	\geq \payv_{\purealt}(\eq)
	\quad
	\text{for all $\pure\in\supp(\eq)$ and for all $\purealt\in\pures$.}
\end{equation}
If $\eq$ satisfies \eqref{eq:Nash} and is \emph{pure} (i.e. $\eq = \bvec_{\pure}$ for some $\pure\in\pures$), it is called a \emph{pure \acl{NE}} of $\game$;
if, in addition, \eqref{eq:Nash} holds as a strict inequality for all $\purealt\notin\supp(\eq)$, $\eq$ is said to be a \emph{strict equilibrium} of $\game$.

A \emph{restriction} of a game $\game$ is a population game $\game' \equiv \game'(\pures',\payv')$ that is defined by a subset $\pures'\subseteq\pures$ of the original game's action set and by payoff functions $\payv_{\pure}$ obtained by restricting the original payoff functions to the reduced state space $\strat' =\simplex(\pures')$ of $\game'$.
If $x\in\strat$ is a \acl{NE} of some restriction of $\game$, it will be called a \emph{restricted equilibrium};
as such, $x \in \strat$ is a restricted equilibrium of $\game$ if all strategies in its support earn equal payoffs.

\begin{example}
[Matching in normal form games]
\label{ex:matching}
The simplest example of a population game is obtained by uniformly matching a population of agents to play a two-player symmetric normal form game with payoff matrix $A = (A_{\pure\purealt})_{\pure,\purealt=1}^{\nPures}$.
Aggregating over all matches, the payoff to an $\pure$-strategist when the population is at state $x\in\strat$ is $\payv_{\pure}(x) = \insum_{\purealt\in\pures} A_{\pure\purealt} x_{\purealt}$.
\end{example}


\begin{example}
[Potential games]
\label{ex:potential}
A population game $\game$ is called a \emph{potential game} \citep{San01,MS96} if there exists a \emph{potential function} $\pot$ defined on a neighborhood of $\strat$ such that
\begin{equation}
\label{eq:potential}
\frac{\pd\pot}{\pd x_{\pure}}
	=\payv_{\pure}(x)
	\quad
	\text{ for all $\pure\in\pures$ and all $x\in\strat$.}
\end{equation}
\end{example}

\begin{example}
[Contractive games]
\label{ex:contract}
A population game $\game$ is called (\emph{weakly}) \emph{contractive} \citep{HS09} if
\begin{equation}
\label{eq:contract}
\sum_{\pure\in\pures} (\payv_{\pure}(x') - \payv_{\pure}(x))  (x_{\pure}' - x_{\pure})
	\leq 0
	\quad
	\text{ for all $x, x' \in \strat$.}
\end{equation}
If \eqref{eq:contract} binds only when $x = x'$, $\game$ is called \emph{strictly contractive},
whereas if \eqref{eq:contract} binds for all $x,x'\in\strat$, $\game$ is called \emph{conservative}.%
\footnote{\cite{HS09} use the name \emph{stable games} instead of contractive, but \cite{San15} proselytizes for the terms employed here.
In convex analysis, condition \eqref{eq:contract} is called \emph{monotonicity}.}
\end{example}

\subsection{Evolutionary dynamics}
\label{sec:ED}

The term \emph{evolutionary dynamics} refers to rules that assign to each population game $\game$ a dynamical system on its state space $\strat$.
This is usually done by mapping each game to a \emph{law of motion}, i.e. a differential equation of the form
\begin{equation}
\label{eq:ED}
\tag{D}
\dot x
	= \dynfield(x).
\end{equation}
In most cases, the \emph{motion field} $\dynfield(x)$ of \eqref{eq:ED} is defined by introducing a mapping $(x,\pi)\mapsto \wilde\dynfield(x,\pi)$ from state/payoff pairs to vectors, and then specifying that $\dynfield(x) \equiv \wilde\dynfield(x,\payv(x))$.
In what follows, we will focus exclusively on such dynamics.

To ensure that solutions to \eqref{eq:ED} remain in $\strat$ for all $t\geq0$, $\dynfield(x)$ should not point outward from $\strat$;
formally, $\dynfield(x)$ should lie in the \emph{tangent cone} of $\strat$ at $x$, defined here as
\begin{equation}
\label{eq:tcone-simplex}
\tcone_{\strat}(x)
	= \setdef{z\in\zspace^{\pures}}{z_\pure \geq 0 \text{ whenever } x_{\pure} = 0}.
\end{equation}
Under many evolutionary dynamics (including the replicator dynamics and other imitative dynamics), the support of $x(t)$ remains invariant under \eqref{eq:ED}, implying in turn that the interior of each face of $\strat$ remains invariant under \eqref{eq:ED}.
When this is the case, $\dynfield(x)$ actually lies in the \emph{tangent space} to $\strat$ at $x$, defined as
\begin{equation}
\label{eq:tspace-simplex}
\tspace_{\strat}(x)
	= \setdef{z\in\zspace^{\pures}}{z_\pure = 0\text{ whenever }x_\pure=0}
	\subseteq \tcone_{\strat}(x).
\end{equation}
Clearly, for every interior state $x\in\intstrat$, we have $\tspace_{\strat}(x) = \tcone_{\strat}(x) = \zspace^{\pures}$.

A basic monotonicity criterion linking \eqref{eq:ED} with the underlying game requires \emph{positive correlation} between the strategies' payoffs and growth rates.
Concretely, this means that
\begin{equation}
\label{eq:PC}
\tag{PC}
\sum_{\pure\in\pures} \payv_\pure(x) \dynfield_\pure(x)
	\geq 0
	\quad
	\text{for all $x \in \strat$,}
\end{equation}
with equality only if $\dynfield(x)=0$.%
\footnote{This and closely related conditions are considered by \cite{Fri91}, \cite{Swi93}, \cite{San01}, and \cite{DemRit03}.}
If \eqref{eq:ED} satisfies \eqref{eq:PC}, every \acl{NE} of $\game$ is a rest point of \eqref{eq:ED}.
For a detailed discussion, see \cite{San10}.

\smallskip

We provide two prototypical examples of evolutionary dynamics below:

\begin{example}
[The replicator dynamics]
\label{ex:Rep}
The quintessential evolutionary game dynamics are the \emph{replicator dynamics} of \cite{TJ78}:
\begin{equation}
\label{eq:RD}
\tag{RD}
\dot x_{\pure}
	= x_{\pure} \left[ \payv_{\pure}(x) - \insum_{\purealt\in\pures} x_{\purealt} \payv_{\purealt}(x) \right].
\end{equation}
\end{example}

\begin{example}
[The  Euclidean projection dynamics]
\label{ex:proj}
The other fundamental example we consider is the \emph{Euclidean projection dynamics} of \cite{NZ97} (see also \citealp{Fri91}, and \citealp{LS08}).
These are defined by
\begin{equation}
\label{eq:PD}
\tag{PD}
\dot x = \argmin_{z \in \tcone_{\strat}(x)} \norm{\payv(x) - z}_{2}^{2},
\end{equation}
where $\norm{z}_{2}=(\sum_{\pure} z_{\pure}^{2})^{1/2}$ denotes the ordinary Euclidean norm on $\R^{\pures}$.
Geometrically, the dynamics \eqref{eq:PD} are defined by taking the Euclidean projection of the payoff field $\payv(x)$ onto the tangent cone $\tcone_{\strat}(x)$.
Since $\tcone_{\strat}(x) = \zspace^{\pures}$ on the interior $\intstrat$ of the simplex, we obtain the simple formula
\begin{equation}
\label{eq:Friedman}
\dot x_{\pure}
	= \payv_{\pure}(x) -\frac{1}{\abs{\pures}} \sum_{\purealt\in\pures} \payv_{\purealt}(x),
\end{equation}
valid for all interior $x\in\intstrat$.
For an explicit formula on the boundary of $\strat$, see \cref{ex:PD-full}.
\end{example}

\subsection{Antecedents}
\label{sec:previous}

The class of dynamics studied here is a substantial generalization of both the replicator dynamics and the projection dynamics.
We now describe works from an assortment of fields that are antecedents of our approach.

The replicator equation \eqref{eq:RD} for common interest games
is a basic model from population genetics \citep{SS83}.
The \emph{fundamental theorem of natural selection}, attributed to \cite{Fis30}, states that natural selection among genes increases overall population fitness.
\cite{Kim58} introduced a corresponding maximum principle showing that population fitness increases at a maximum rate under \eqref{eq:RD}, provided that one imposes a certain nonlinear constraint on the set of feasible changes in population frequencies (see \cref{rem:Kimura} in \cref{sec:interior}).
Later, \cite{Sha79} and \cite{Aki79} put Kimura's maximum principle on a firm mathematical footing using tools from differential geometry \textendash\ specifically, by introducing a suitable Riemannian metric (see \cref{sec:metrics}).
The derivation of the replicator dynamics in the latter papers provides a basic instance of the geometric construction of Riemannian dynamics developed in \cref{sec:geometry}, while our construction based on balancing gains and costs can be viewed as
an extension of
Kimura's analysis (cf.~\cref{rem:Kimura}).

\cite{HS90} model natural selection in populations of animals whose traits are represented by elements of a continuous set.
They assume that all members of the population share the same trait $x$, except for an infinitesimal group of mutants whose traits differ infinitesimally from $x$. 
The evolution of the preponderant trait $x$ follows a gradient-like process, moving in the direction that agrees with the play of the most successful local mutants.
To obtain variations on this process, \cite{HS90} use a Riemannian metric to define the size and shape of the neighborhood of local mutants.
When the trait space is $\strat$ and the fitness of mutant $y$ takes the linear form $\sum_{\pure} y_{\pure} \payv_{\pure}(x)$, they showed that the evolution of $x$ on the interior of $\strat$ is given by
\begin{equation}
\label{eq:Rie-coordsIntro}
\dot x_{\pure}
	= \sum_{\purealt\in\pures} \left[
	g_{\pure\purealt}^{-1}(x)
	- \frac{\sum_{\purealtalt} g_{\pure\purealtalt}^{-1}(x)  \sum_{\purealtalt} g_{\purealtalt\purealt}^{-1}(x)}{\sum_{\purealtalt,\kappa} g_{\purealtalt\kappa}^{-1}(x)}
	\right]
	\payv_{\purealt}(x),
\end{equation}
where $g(x)$ is a field of symmetric positive definite matrices that defines the Riemannian metric in question (see \cref{sec:metrics}).
\cite{HS90} then observed that under the Shahshahani metric, the system \eqref{eq:Rie-coordsIntro} boils down to the replicator dynamics \eqref{eq:RD}.
As we shall see, \eqref{eq:Rie-coordsIntro} describes the dynamics studied in this paper at all states $x\in\strat$ in what we call the \emph{minimal-rank} case (cf. \cref{sec:boundary}).

In the course of analyzing perturbed best response dynamics \citep{FL98} and variants of fictitious play \citep{Bro51}, \cite{Hop99b} introduced a class of game dynamics that are defined on the interior of $\strat$ as
\begin{equation}
\label{eq:Hopkins}
\dot x_{\pure}
	= \sum_{\purealt\in\pures} \HopFcn_{\pure\purealt}(x)\payv_{\purealt}(x).
\end{equation}
Here $\HopFcn(x)$ is a smoothly-varying field of symmetric matrices that are positive definite on $\zspace^{\pures}$ and map constant vectors to $0$.
\cite{Hop99b} showed that the linearization of these dynamics agrees with that of perturbed best response dynamics up to a positive affine transformation.
As a result, the local stability of rest points of \eqref{eq:Hopkins} agrees with that of the corresponding rest points of perturbed best response dynamics with sufficiently small noise levels. 
As we show in \cref{sec:Hopkins}, the dynamics \eqref{eq:Rie-coordsIntro} satisfy Hopkins' conditions;
conversely, all dynamics satisfying Hopkins' conditions can be expressed in the form \eqref{eq:Rie-coordsIntro}.
Thus, on the interior of $\strat$, the dynamics of \cite{Hop99b} are equivalent to the dynamics studied here (\cref{prop:HopDRD}).

More recently, \cite{Har11} used ideas from information geometry to define generalizations of the replicator dynamics, and employed concepts from Riemannian geometry to state and prove certain properties of the induced dynamics.
Ignoring boundary issues, these dynamics are an important special case of ours \textendash\ specifically, the class of separable dynamics that we introduce in \cref{ex:separable}.

Finally, we note here that there is a surprising and deep connection between the Hessian subclass of Riemannian game dynamics and a model of reinforcement learning recently examined by \cite{MS16}.
We explore this relation in detail in \cref{sec:RL}.

\section{Riemannian game dynamics}
\label{sec:dynamics}

\subsection{Gains, costs, and dynamics}
\label{sec:gains-costs}

We now define the dynamics we study as balancing a \emph{gain from motion}, determined from the game's payoffs, against a \emph{cost of motion}, a new primitive that
captures the difficulty of motion along a given direction from a given state.
To streamline our presentation, we focus below on interior states $x\in\intstrat\equiv\intr(\strat)$, postponing the treatment of boundary states until the machinery needed to handle them is in place.

Given a population game $\game(\pures,\payv)$, the \emph{gain from motion} from state $x\in\strat$ along $z \in \R^{\pures}$ is defined as
\begin{equation}
\label{eq:gain}
\gain^\payv(z;x) = \sum_{\pure\in\pures} \payv_{\pure}(x) z_{\pure},
\end{equation}
In words, the gain of motion measures the agreement between payoffs and vectors of motion as in the standard monotonicity criterion \eqref{eq:PC}.  
For an alternative interpretation, recall that the defining property \eqref{eq:potential} of a potential game with potential function $\pot$ can be expressed as
\begin{equation}
\label{eq:pot-rate}
\sum_{\pure\in\pures} \frac{\pd\pot}{\pd x_{\pure}} z_{\pure}
	= \sum_{\pure\in\pures} \payv_{\pure}(x) z_{\pure}\quad
	\text{for all $z \in \R^\pures$ and all $x\in\strat$.}
\end{equation}
The \acl{LHS} of \eqref{eq:pot-rate} is the rate of change in the value of potential as the state moves away from $x$ along $z$.
Viewed in this light, the gain $\gain^\payv(z;x)$ extends the notion of ``the rate of increase in potential'' to games that do not admit a potential function.%
\footnote{The logic here is similar to the original motivation for the definition of contractive games, which extends the idea of a game with a concave potential function to games that do not admit a potential \citep{HS09}.
The gain \eqref{eq:gain} is referred to as the ``aggregate gross gain'' by \cite{Zus18} in his general analysis of Lyapunov functions for contractive games and evolutionarily stable strategies.}
In particular, the gain captures the alignment between the direction of motion $z$ and the payoffs at state $x$;
it is also linearly homogeneous in $z$, so it grows linearly as one increases the speed of motion in a fixed direction.  

By contrast, the \emph{cost of motion} $\cost(z;x)$ is a primitive that represents the intrinsic difficulty of moving from state $x$ along a given displacement vector $z$.
For concreteness, we assume that the costs of motion are positive, smoothly varying with the population state $x$, and quadratic in $z$.
It is convenient to define costs $\cost(z;x)$ for states $x$ in the positive orthant $\orthant\equiv \R_{++}^{\pures}$ and for displacement vectors $z$ in $\R^{\pures}$.%
\footnote{We can interpret $\orthant$ as the set of population states that could arise if the population size were allowed to vary.  We could instead define costs only for states in $\intstrat$ and displacement vectors in $\zspace^{\pures}$, at the price of additional abstraction: see \cref{rem:DomOfg} below.}
Then since costs are positive and quadratic in $z$, the cost function 
can be expressed as
\begin{equation}
\label{eq:cost}
\cost(x;z)
	= \frac{1}{2} z^{\ttop} g(x) z,\quad
	\text{for all $z \in \R^\pures$ and all $x\in\orthant$.}
\end{equation}
where $g$ is a smooth assignment of symmetric positive definite matrices $g(x)$ to states $x \in \orthant$.

To use the above to define the dynamics at interior population states, we posit that the vector of motion from state $x\in \intstrat$ maximizes the difference between the gain of motion $\gain^\payv(x;z)$ and the cost of motion $\cost(x;z)$, subject to feasibility:
\begin{equation}
\label{eq:RGD-cost}
\dot x
	= \argmax_{z\in\zspace^{\pures}} \, \bracks*{\gain^\payv(z;x) - \cost(z;x)}.
\end{equation}
We refer to the dynamics \eqref{eq:RGD-cost} as \emph{Riemannian game dynamics}, for reasons that we will soon make clear.
Before doing so, we show how the leading examples of these dynamics are derived from the ansatz \eqref{eq:RGD-cost} through suitable choices of the cost function $\cost(x;z)$:

\begin{example}
\label{ex:cost-Eucl}
A straightforward, state-independent choice for the cost of motion is
\begin{equation}
\label{eq:cost-Eucl}
\cost(x;z)
	= \frac{1}{2} \sum_{\pure\in\pures} z_{\pure}^{2}.
\end{equation}
To solve the resulting maximization problem in \eqref{eq:RGD-cost}, consider the Lagrangian
\begin{equation}
\Lambda(z,\mu;x)
	= \sum_{\pure\in\pures} \bracks*{\payv_{\pure}(x) z_{\pure} - \frac{1}{2} z_{\pure}^{2} - \mu z_{\pure}},
\end{equation}
where the last term is associated with the motion feasibility constraint $\sum_{\pure\in\pures} z_{\pure} = 0$.
A direct differentiation gives the optimality condition $z_{\pure} = \payv_{\pure}(x) - \mu$, and
the feasibility constraint
yields $\mu = \abs{\pures}^{-1} \sum_{\pure\in\pures} \payv_{\pure}(x)$.
Substituting back into in \eqref{eq:RGD-cost} yields
\begin{equation}
\label{eq:PD-interior}
\dot x_{\pure}
	= \payv_{\pure}(x) - \frac{1}{\abs{\pures}} \sum_{\pure\in\pures} \payv_{\pure}(x).
\end{equation}
As we discussed in \cref{sec:prelims} (cf.~\cref{ex:proj}), the system \eqref{eq:PD-interior} describes the (Euclidean) projection dynamics of \cite{NZ97} on $\intstrat$.
\end{example}

\begin{example}
\label{ex:cost-Shah}
For a basic state-dependent choice for the cost of motion, let
\begin{equation}
\label{eq:cost-Shah}
\cost(x;z)
	= \frac{1}{2} \sum_{\pure\in\pures} \frac{z_{\pure}^{2}}{x_{\pure}}
\end{equation}
The Lagrangian for the maximization problem in \eqref{eq:RGD-cost} is now
\begin{equation}
\Lambda(z,\mu;x)
	= \sum_{\pure\in\pures} \bracks*{\payv_{\pure}(x) z_{\pure} - \frac{z_{\pure}^{2}}{2x_{\pure}} - \mu z_{\pure}}.
\end{equation}
Differentiating now yields the optimality condition $z_{\pure} = x_{\pure} \payv_{\pure}(x) - \mu x_{\pure}$, and feasibility implies that $\mu = \sum_{\purealt\in\pures} x_{\purealt} \payv_{\purealt}(x)$.
Substituting in \eqref{eq:RGD-cost}, we obtain 
\begin{equation}
\label{eq:RD-interior}
\dot x_{\pure}
	= x_{\pure} \bracks*{\payv_{\pure}(x) - \insum_{\purealt\in\pures} x_{\purealt} \payv_{\purealt}(x)}.
\end{equation}
The system \eqref{eq:RD-interior} defines the replicator dynamics of \cite{TJ78} 
(cf.~\cref{ex:Rep}).
Although the derivation above assumed that $x$ is interior, the expression \eqref{eq:RD-interior} actually describes the replicator dynamics on all of $\strat$;
we explain why this is so in \cref{sec:boundary}.
\end{example}

\subsection{Costs of motion and Riemannian metrics}
\label{sec:metrics}

We now proceed with a reinterpretation of the costs of motion using notions from geometry. 
The fundamental notion here is that of a \emph{Riemannian metric}, a position-dependent variant of the ordinary (Euclidean) scalar product between vectors.%
\footnote{To be clear, a Riemannian metric is not a metric in the sense of measuring distances between points in a metric space, but it induces such a distance function in a canonical way.  For a comprehensive introduction to this topic, see the masterful account of \cite{Lee97,Lee03}.} 

To start, we recall  that a \emph{scalar product} on a subspace  $\vecspace$ of $\R^{\pures}$ is a bilinear pairing $\product{\argdot}{\argdot}\from \vecspace\times \vecspace\to \R$ which satisfies the following for all $w,w^\prime\in\vecspace$:
\begin{enumerate}
\item
\emph{Symmetry:}
$\product{w}{w^\prime} = \product{w^\prime}{w}$.
\item
\emph{Positive definiteness:}
$\product{w}{w} \geq 0$, with equality if and only if $w=0$.
\end{enumerate}
The \emph{norm} of a vector $w\in \vecspace$ is then defined as
\begin{equation}
\label{eq:norm}
\norm{w}
	= \product{w}{w}^{1/2}.
\end{equation}

When $\vecspace=\R^{\pures}$, 
the definition above becomes most transparent by
writing $w = \insum_{\pure} w_{\pure} \bvec_{\pure}$ and $w' = \insum_{\purealt} w_{\purealt}' \bvec_{\purealt}$ in the standard basis $\{\bvec_\pure\}_{\pure\in\pures}$ of $\R^{\pures}$.
Since $\product{\argdot}{\argdot}$ is positive definite and bilinear, there exists a positive-definite matrix $\scprod = \left( \scprod_{\pure\purealt} \right)_{\pure,\purealt\in\pures}$ such that 
\begin{subequations}
\label{eq:coords}
\begin{flalign}
\label{eq:product-coords}
\product{w}{w'}
	&= \sum_{\pure,\purealt\in\pures} w_{\pure}  \scprod_{\pure\purealt} w_{\purealt}'
	= w^{\ttop} {\scprod} w'
\intertext{and}
\label{eq:norm-coords}
\norm{w}^{2}
	&= \sum_{\pure,\purealt\in\pures} w_{\pure} \scprod_{\pure\purealt} w_{\purealt}
	= w^{\ttop} {\scprod} w.
\end{flalign}
\end{subequations}
The matrix $\scprod$ is known as the \emph{metric tensor} of $\product{\argdot}{\argdot}$ and its components are $\scprod_{\pure\purealt} = \product{\bvec_{\pure}}{\bvec_{\purealt}}$.
Clearly, a scalar product is represented uniquely by its metric tensor and vice versa, so we will move freely between the two representations in what follows.

With all this in mind, a \emph{Riemannian metric} on an open set $\open$ of $\R^{\pures}$ is a $C^{1}$-smooth assignment of scalar products $\product{\argdot}{\argdot}_{x}$ to each $x\in\open$ \textendash\ or, equivalently, as a smooth field $g(x)$ of symmetric positive-definite matrices on $\open$.
In other words, a Riemannian metric  prescribes a way of measuring lengths of and angles between displacement vectors at each $x\in\open$.

The similarity in notation between the above and the definition of costs of motion is not a coincidence.
Looking back at \eqref{eq:RGD-cost}, we see that costs of motion and Riemannian metrics are both defined by means of a $C^{1}$-smooth field of symmetric positive-definite matrices,
with costs and norms being related via
\begin{equation}
\label{eq:metric2cost}
\cost(x;z) = \frac{1}{2} z^{\ttop} g(x) z
	= \frac{1}{2} \norm{z}_{x}^{2}.
\end{equation}
We summarize this connection as follows:

\begin{observation}
Specifying a cost function on $\orthant$ is equivalent to endowing $\orthant$ with a Riemannian metric.
\end{observation}

\begin{remark}
\label{rem:DomOfg}
Defining costs of motion and Riemannian metrics on the positive orthant $\orthant$ allows us to work in standard coordinates,
and simplifies passing from one to the other.
That being said, we could equally well have taken a more parsimonious approach
by defining costs of motion $\cost(x;z)$ only for states $x \in \intstrat$ and feasible displacement vectors $z \in \zspace^{\pures}$, and similarly working with Riemannian metrics 
$\product{\argdot}{\argdot}_{x}$ on $\zspace^{\pures}$ for each $x \in \intstrat$.
In this approach, the equivalence between cost functions and Riemannian metrics can be derived from a standard bijection between quadratic forms and bilinear forms  \cite[see \eg][p.~433]{FIS02}, but at the cost of an extra degree of abstraction.  
\end{remark}

Before proceeding, it is instructive to recast our previous examples in terms of Riemannian metrics:

\begin{example}
\label{ex:metric-Eucl}
The \emph{Euclidean metric} is defined by choosing $g(x)$ to be the identity matrix:
\begin{equation}
\label{eq:metric-Eucl}
g(x)
	= \identity = \diagidentity
	\quad
	\text{for all $x\in\orthant$}.
\end{equation}
This metric corresponds to the cost function $\cost(z;x) = \frac{1}{2} \sum_{\pure} z_{\pure}^{2}$ of \cref{ex:cost-Eucl}, and yields the standard expressions $\product{w}{w'}_{x} = w^{\ttop}w'$ and $\norm{w}_{x} = \sqrt{ w^{\ttop} w}$, all independent of $x$.
\end{example}

\begin{example}
\label{ex:metric-Shah}
The \emph{Shahshahani metric} 
is defined as
\begin{equation}
\label{eq:metric-Shah}
g(x)
	= \diag(1/x_{1},\dotsc,1/x_{n})
	\quad
	\text{for all $x\in\orthant$}.
\end{equation}
This metric corresponds to the cost function $\cost(z;x) = \frac{1}{2} \sum_{\pure} z_{\pure}^{2} / x_{\pure}$ of \cref{ex:cost-Shah}, and yields the Shahshahani inner product $\product{w}{w'}_{x} = \insum_{\pure} w_{\pure} w_{\pure}'/x_{\pure}$.
In contrast to its Euclidean counterpart, the Shahshahani metric is state-dependent:
For instance, since $\norm{\bvec_{\pure}}_{x} = x_{\pure}^{-1/2}$, the set of vectors at $x$ with Shahshahani norm $1$ is squeezed toward the $x_{\pure}$ axis as $x_{\pure}$ becomes small (cf.~\cref{fig:balls-Shah}).
\end{example}

\begin{figure}[t]
\subfigure
[Euclidean unit balls ($p = 0$)]
{\label{fig:balls-Eucl}%
\includegraphics[width=.48\textwidth]{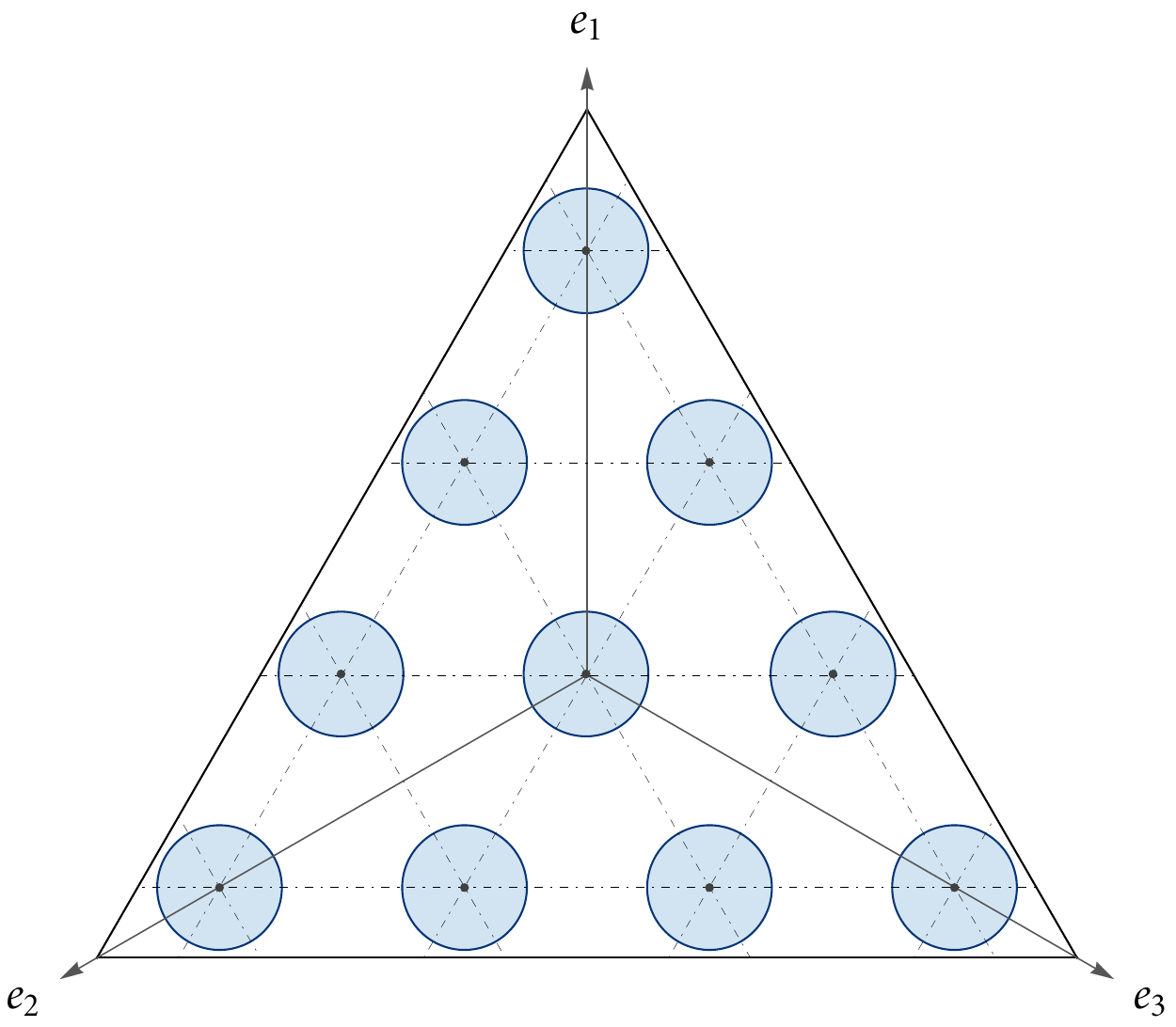}%
} 
\hfill
\subfigure
[Shahshahani unit balls ($p = 1$)]
{\label{fig:balls-Shah}%
\includegraphics[width=.48\textwidth]{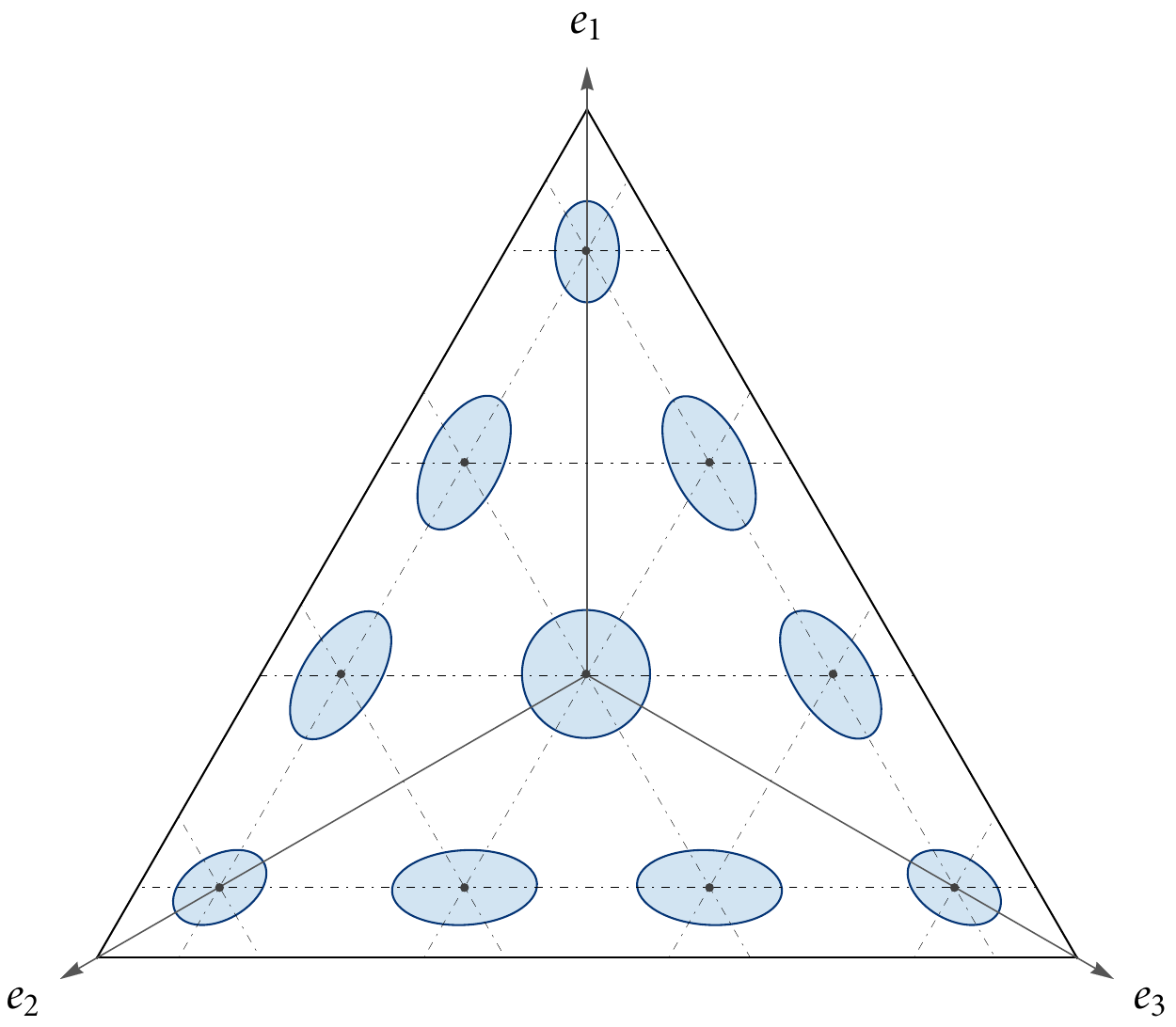}%
} 
\subfigure
[$p$-Shahshahani unit balls  ($p = 2$)]
{\label{fig:balls-pShah}%
\includegraphics[width=.48\textwidth]{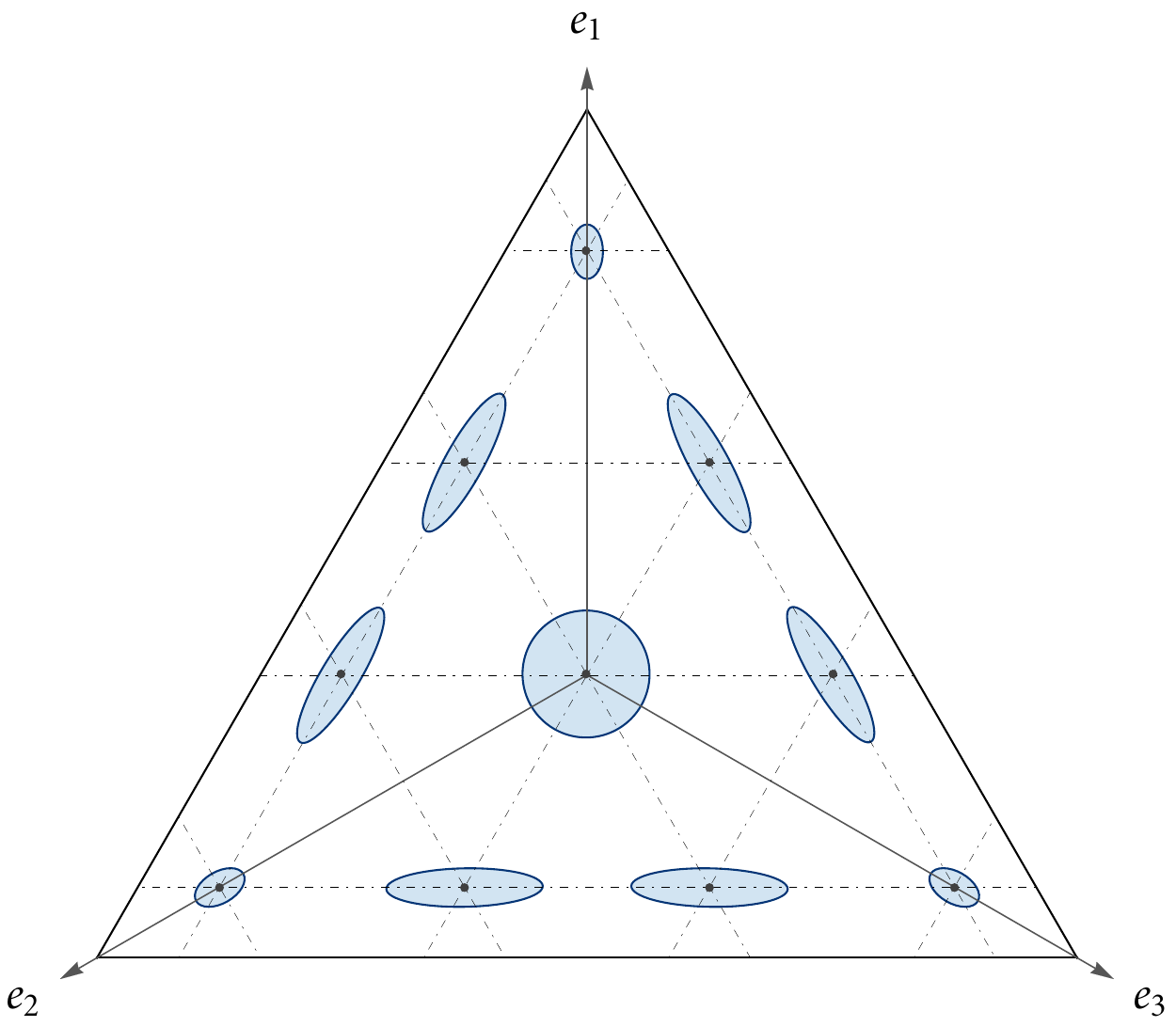}%
} 
\hfill
\subfigure
[Nested unit balls ($\pures_{1} = \{2,3\}$, $s =3$)]
{\label{fig:balls-nested}%
\includegraphics[width=.48\textwidth]{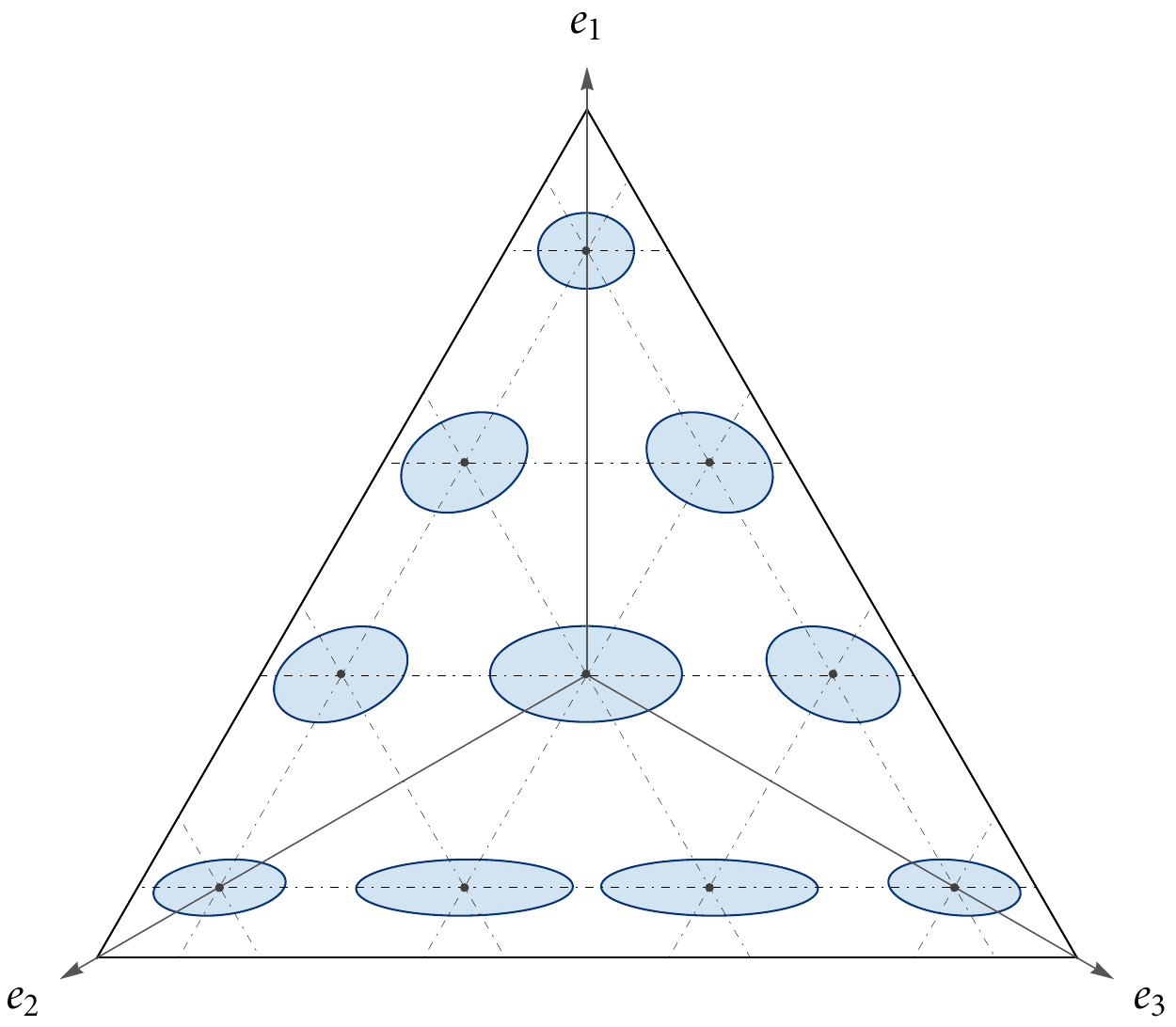}%
} 
\caption{Unit balls on the $3$-simplex under the metrics of \crefrange{ex:metric-Eucl}{ex:metric-nested}.
For each base point $x$ shown, the shaded regions comprise all tangent vectors $z$ based at $x$ that  satisfy $\norm{z}_{x}^2 \leq 1$.
}
\label{fig:balls}
\end{figure}

We now present two further classes of metrics to which we return in \cref{sec:examples}:

\begin{example}
\label{ex:metric-pShah}
For $p\ge 0$, the \emph{p-Shahshahani metric} is defined as
\begin{equation}
\label{eq:metric-pShah}
g(x)
	= \diag(1/x_{1}^{p},\dotsc,1/x_{\nPures}^{p})
	\quad
	\text{for all $x\in\orthant$}.
\end{equation}
This definition includes the Euclidean metric ($p = 0$) and the standard Shahshahani metric ($p = 1$) as special cases,
and corresponds to the cost function 
\begin{equation}
\label{eq:cost-Shahp}
\cost(x;z)
	= \frac{1}{2} \sum_{\pure\in\pures} \frac{z_{\pure}^{2}}{x^p_{\pure}}.
\end{equation}
Since $\frac1{x^p_{\pure}}/\frac1{x^p_{\purealt}} = (x_\purealt/x_\pure)^p$, specifying larger values of $p$ means raising the relative cost of changes in the use of rare strategies.  For instance, if strategy $\pure$ is half as prevalent in the population as strategy $\purealt$,
then changes in the use of $\pure$ cost $2^p$ times as much as changes in the use of $\purealt$.

\cref{fig:balls-Eucl,fig:balls-Shah,fig:balls-pShah} illustrate the effects of increasing the value of $p$ on costs of motion: when $x_\alpha$ is small, increasing $p$ increases the cost of moving toward and away from the $x_\alpha = 0$ boundary relative to the cost of moving along this boundary.
\end{example}

\begin{example}
\label{ex:metric-nested}
Let $\pures_{1},\dotsc,\pures_{m}$ be a partition of $\pures$ into $m$ groups of intrinsically similar strategies, let $\class{\pure}$ denote the group containing strategy $\pure$,
let $x_{\class{\pure}} = \sum_{\purealt\in\class{\pure}} x_{\purealt}$ denote the population share of all strategies that are ``similar'' to $\pure$ in the above partition, and let $s>0$ be a parameter representing the ``strength'' of the similarity relation.
The \emph{nested Shahshahani metric} is then defined as
\begin{equation}
\label{eq:metric-nested}
g_{\pure\purealt}(x)
	= \begin{cases}
	 \frac{\delta_{\pure\purealt}}{x_{\pure}} + s \frac{1}{x_{\class{\pure}}}
		&\quad
		\text{if $\purealt\in\class{\pure}$},
		\\
		0
		&\quad
		\text{otherwise}.
	\end{cases}
\end{equation}
While the full expression for the cost function corresponding to the metric \eqref{eq:metric-nested} is cumbersome, the cost of motion
along the basic directions $\bvec_{\purealt} - \bvec_{\pure}$ takes a fairly simple form, namely
\begin{equation}\label{eq:NRNorm}
\cost(\bvec_{\purealt} - \bvec_{\pure};x)
	= \begin{cases}
	\frac{1}{2} \!\left(\frac{1}{x_{\pure}} + \frac{1}{x_{\purealt}}\right) 
		&\quad
		\text{if $\purealt\in\class{\pure}$},
		\\
	\frac{1}{2} \!\left(\frac{1}{x_{\pure}} + \frac{1}{x_{\purealt}}\right) + \frac{1}{2}s \!\left(\frac{1}{x_{\class{\pure}}} +\frac{1}{x_{\class{\purealt}}} \right)
		&\quad
		\text{otherwise}.
	\end{cases}
\end{equation}
Under \eqref{eq:NRNorm}, switches between strategies in the same group take the same form as under the Shahshahani cost function from \cref{ex:cost-Shah,ex:metric-Shah}.
On the other hand, switches between strategies in different groups are more costly, with the additional costs being inversely proportional to population shares of the groups and proportional to the strength of the similarity relation.

\cref{fig:balls-nested} illustrates the costs of motion \eqref{eq:NRNorm} from various states in $\strat$ for partition $\pures_{1} = \{1\}$, $\pures_{2}= \{2, 3\}$ and  similarity strength $s= 3$.  The unit balls near the $x_1=0$ boundary are elongated along that boundary to a greater extent than the balls near the other boundaries.  This reflects the fact that, mutatis mutandis, a unit of cost buys more motion between strategies $2$ and $3$ than between the other pairs of strategies.
\end{example}

\subsection{Derivation of the dynamics: the interior case}
\label{sec:interior}

With the above machinery at hand, we can provide an explicit description of the game dynamics under study on the interior $\intstrat$ of $\strat$.
To do so, fix a population game $\game \equiv \game(\pures,\payv)$ and a Riemannian metric $g$ on $\orthant$.
Then,
by \eqref{eq:metric2cost},
the associated Riemannian game dynamics are
\begin{equation}
\label{eq:RGD-metric}
\dot x
	= \argmax_{z\in\zspace^{\pures}} \, \bracks*{\gain^\payv(z;x) - \cost(z;x)}
	= \argmax_{z\in\zspace^{\pures}} \, \bracks*{\sum_{\pure\in\pures} \payv_{\pure}(x)z_{\pure} - \tfrac{1}{2} \norm{z}_{x}^{2}}.
\end{equation}

As in \cref{ex:cost-Eucl,ex:cost-Shah}, to obtain an explicit expression for the vector of motion that solves the maximization problem \eqref{eq:RGD-metric}, consider the Lagrangian
\begin{equation}
\label{eq:Lagrangian}
\Lambda(z,\mu;x)
	= \sum_{\pure\in\pures}\payv_{\pure}(x)z_{\pure} - \tfrac{1}{2}\norm{z}^2_{x}
	- \mu \sum_{\pure\in\pures} z_{\pure}.
\end{equation}
Then, interpreting  $(\payv_{\pure}(x))_{\pure\in\pures}$ as a row vector (see \cref{sec:geometry}) and writing $\onecol = (1,\dotsc,1)^{\ttop}$ for the column vector of ones in the standard basis of $\R^{\pures}$, a simple differentiation yields the first-order optimality condition
\begin{equation}
\label{eq:KKT}
\payv(x)
	= z^{\ttop} g(x) + \mu\onecol^{\ttop}.
\end{equation}
Thus, after rearranging, we get
\begin{equation}
z
	= g^{-1}(x) \bracks{\payv(x)^{\ttop} - \mu\onecol},
\end{equation}
where $g^{-1}(x)$ denotes the inverse of the matrix $g(x)$.
Using the constraint $\sum_{\pure} z_{\pure} = 0$ to solve for $\mu$ and substituting in \eqref{eq:RGD-metric}, some easy algebra leads to the explicit expression
\begin{equation}
\dot x
	= g^{-1}(x) \bracks*{\payv(x)^{\ttop} - \frac{\payv(x) g^{-1}(x) \onecol}{\onecol^{\ttop} g^{-1}(x)\onecol} \, \onecol}.
\end{equation}
Thus, if we set
\begin{equation}
\label{eq:sharp-vecs}
\payv^{\sharp}(x)
	= g^{-1}(x) \payv(x)^{\ttop}
	\quad
	\text{and}
	\quad
\normal(x)
	= g^{-1}(x) \onecol,
\end{equation}
we obtain
\begin{equation}
\label{eq:RGD-sharp}
\dot x
	= \payv^{\sharp}(x) - \frac{\product{\payv^{\sharp}(x)}{\normal(x)}_{x}}{\norm{\normal(x)}^{2}_{x}}\, \normal(x)
	= \payv^\sharp(x) - \frac{\sum_{\pure\in\pures} \payv^{\sharp}_{\pure}(x)}{\sum_{\pure\in\pures} \normal_{\pure}(x)}\,\normal(x)
\end{equation}

We now revisit our two archetypal examples in the light of the explicit expression \eqref{eq:RGD-sharp}:

\begin{example}
If $g(x) = \identity$ is the Euclidean metric, we get $\payv^{\sharp}(x) = \payv(x)^{\ttop}$ and $\normal(x) = \onecol$, so \eqref{eq:RGD-sharp} immediately boils down to \eqref{eq:PD-interior}.
Therefore, when the cost of motion is defined using Euclidean lengths, the components of the displacement vector $\dot x$ equal those of $\payv(x)$ up to a constant that ensures that $\dot x \in \zspace^{\pures}$.
\end{example}

\begin{example}
If $g(x) = \diag(1/x_{1},\dotsc,1/x_{n})$ is the Shahshahani metric of \cref{ex:metric-Shah}, we readily get $\payv_{\pure}^{\sharp}(x) = x_{\pure} \payv_{\pure}(x)$ and $\normal_{\pure}(x) = x_{\pure}$, so \eqref{eq:RGD-sharp} boils down to the replicator dynamics \eqref{eq:RD-interior}.
Thus, when costs are defined using the
Shahshahani norm of the population displacement vector,
changes in the use of rare strategies are more costly than changes in the use of common ones.
As a consequence, the initial term of $\dot x_{\pure}$ is proportional to both the payoff  $\payv_{\pure}(x)$ of strategy $\pure$ and to the mass $x_{\pure}$ of agents playing strategy $\pure$.
The second term ensures that $\dot x \in \zspace^{\pures}$, but here the normalization for strategy $\pure$ is itself proportional to $x_{\pure}$.
\end{example}

\begin{remark}
\label{rem:Kimura}
The derivation above is closely related to  \citegen{Kim58} derivation of  the replicator dynamics in \emph{common interest games}, \ie games in which $\payv(x) = (Ax)^{\ttop}$ for some symmetric matrix $A$.
Such games admit the potential function $\pot(x) = \frac{1}{2} x^{\top}\!Ax$  (cf.~\cref{eq:potential}), which reports one-half of the population's average payoff. 
Using somewhat different language, \cite{Kim58} proposed the population dynamics 
\begin{equation}
\label{eq:RGD-Kimura}
\dot x
	= \argmax\setdef*{\insum_{\pure} \frac{\pd\pot}{\pd x_{\pure}} z_{\pure}}{z \in \tcone_{\strat}(x) \text{ and } \norm{z}^2_{x} = \sigma^2_{\payv}(x)}.
\end{equation}
Here $\insum_{\pure} \frac{\pd\pot}{\pd x_{\pure}} z_{\pure}$ is the rate of change of potential along $z$, $\norm{\argdot}_{x}^{2}$ is  the Shahshahani norm
and
$\sigma_{\payv}^{2}(x) = \sum_{\pure} x_{\pure} \bracks{\payv_{\pure}(x) - \sum_{\purealt} x_{\purealt} \payv_{\purealt}(x)}^{2}$ denotes the variance in the population's payoffs at state $x$.  
It is easy to verify that \eqref{eq:RGD-Kimura} boils down to the replicator dynamics for the potential game ${\payv}(x) = (Ax)^{\ttop}$.
\end{remark}

\subsection{The boundary case}
\label{sec:boundary}

We now turn to an important dichotomy that arises when extending the definition of the dynamics \eqref{eq:RGD-metric} to the boundary of $\strat$.
To begin, recall from \eqref{eq:tspace-simplex} that the \emph{tangent space} $\tspace_{\clorthant}(x)$ to the nonnegative orthant $\clorthant$ at $x$ is the linear subspace
\begin{equation}
\label{eq:tspace-orth}
\tspace_{\clorthant}(x)
	= \setdef{z\in\R^{\pures}}{z_{\pure} = 0\text{ whenever }x_{\pure}=0}
	= \R^{\supp(x)}.
\end{equation}
We then say that a Riemannian metric $g$ on $\orthant$ is \emph{extendable} to $\clorthant$ if the map $x \mapsto g^{-1}(x)$ on $\orthant$ admits a (necessarily unique) $C^{1}$-smooth extension to $\clorthant$ which we denote by $g^{\sharp}$ (so $g^{\sharp}(x) \equiv g^{-1}(x)$ for all $x\in\orthant$), and which satisfies
\begin{equation}
\label{eq:extension}
\tspace_{\clorthant}(x)
	\subseteq \im g^{\sharp}(x)\;\text{ for all }x\in\clorthant.
\end{equation}
In the above, $\im g^{\sharp}(x)$ is the image (column space) of $g^{\sharp}(x)$;
we henceforth call this set the \emph{domain} of $g$ at $x$ and denote it by $\domg(x)$.
\cref{prop:extension} in \cref{app:geometry} shows that if $g$ is extendable in the sense of \eqref{eq:extension}, then the field of scalar products  associated with $g$ also admits a unique continuous extension from $\orthant$ to $\clorthant$, with $\product{\argdot}{\argdot}_{x}$ defined on $\domg(x)$.

In what follows, we focus on two basic forms of extendability.
First,
if $\domg(x) = \R^\pures$ for all $x \in \clorthant$, we say that $g$ is \emph{full-rank extendable};
instead,
if $\domg(x) = \tspace_{\clorthant}(x)$ for all $x \in \clorthant$, we say that $g$ is \emph{minimal-rank extendable}.
We henceforth use the term ``extendable'' to refer to these two cases exclusively.

\begin{example}
\label{ex:Eucl-ext}
The Euclidean metric has $g^{\sharp}(x) = g^{-1}(x) = \identity$ for all $x\in\clorthant$, so it is full-rank extendable by default.
\end{example}

\begin{example}
\label{ex:Shah-ext}
The Shahshahani metric has $g^{\sharp}(x) = g^{-1}(x) = \diag(x_{1},\dotsc,x_{n})$, so $\domg(x) =\tspace_{\clorthant}(x) = \R^{\supp(x)}$ for all $x\in\clorthant$.
Thus, the Shahshahani metric is minimal-rank extendable, and the induced scalar product on $\domg(x) = \R^{\supp(x)}$ is
\begin{equation}
\product{w}{w'}_{x}
	= \insum_{\pure\in\supp(x)} w_{\pure} w_{\pure}' / x_{\pure}
	\quad
	\text{for all }w,w' \in \tspace_{\clorthant}(x).
\end{equation}
\end{example}

\begin{remark}
Intuitively, minimal-rank extendable metrics partition $\clorthant$ into the relative interiors of each of its faces (including $\orthant$ itself).
We will see that under the dynamics generated by such metrics, the relative interior of each face of $\strat$ is an invariant set.
\end{remark}

To extend the definition of the dynamics to the boundary $\bd(\strat)$ of $\strat$, we introduce the cone of \emph{$g$-admissible} vectors
\begin{equation}
\metcone(x)
	= \tcone_{\strat}(x)\cap\domg(x),
\end{equation}
This cone, which comprises all tangent vectors $z\in\tcone_{\strat}(x)$ that also lie in $\domg(x)$, specifies the possible directions of motion at a given state $x \in \strat$.
In particular, when $x \in \intstrat$ is interior, we have $\tcone_{\strat}(x) = \tspace_{\strat}(x) = \zspace^{\pures}$ and $\domg(x) = \R^{\pures}$.
Thus, the $g$-admissible set is the hyperplane
\begin{equation}
\label{eq:ProjDomInt}
\metcone(x)
	= \zspace^{\pures} \cap \R^{\pures}
	= \zspace^{\pures},
\end{equation}
as anticipated in \cref{eq:RGD-metric}.
Further instances of $g$-admissible cones are depicted in \cref{fig:cones}.


\begin{figure}
\footnotesize

\subfigure
[Full-rank extendability.]
{\label{fig:cones-Eucl}
\begin{tikzpicture}
[>=stealth,
vecstyle/.style = {->, line width=.25pt},
edgestyle/.style={-, line width=.5pt, black},
nodestyle/.style={circle, fill=Black,inner sep = .5pt},
plotstyle/.style={color=DarkGreen!80!Cyan,thick}]

\def\unit{.2\textwidth}
\def\costhirty{0.8660256}
\def\cosfortyfive{0.7071068}
\def\veclength{.5}

\coordinate (O) at (0,0);

\coordinate (e2) at (-\costhirty*\unit,-\unit/2);
\coordinate (e3) at (\costhirty*\unit,-\unit/2);
\coordinate (e1) at (0,\unit);

\coordinate (test1) at (-.05*\unit,.4*\unit);
\coordinate (test2) at (0,-\unit/2);

\node [nodestyle] (O) at (O) {};
\node [nodestyle, label = {[label distance=3] 210:{$\bvec_{2}$}}] (e2) at (e2) {};
\node [nodestyle, label = {[label distance=3] -30:{$\bvec_{3}$}}] (e3) at (e3) {};
\node [nodestyle, label = {[label distance=5] 90:{$\bvec_{1}$}}] (e1) at (e1) {};

\filldraw [MyBlue!15] (test1) circle (1.1*\veclength);
\filldraw [MyBlue!15] ($(test2) + 1.1*\veclength*(1,0)$) arc [start angle =0, end angle = 180, radius = 1.1*\veclength];

\node [nodestyle] (test1) at (test1) {};
\node [nodestyle] (test2) at (test2) {};

\draw [line width=.25pt,black!50] (O.center) to (e2.center);
\draw [line width=.25pt,black!50] (O.center) to (e3.center);
\draw [line width=.25pt,black!50] (O.center) to (e1.center);

\draw [vecstyle] (e2.center) to ($1.1*(e2)$);
\draw [vecstyle] (e3.center) to ($1.1*(e3)$);
\draw [vecstyle] (e1.center) to ($1.1*(e1)$);

\draw [edgestyle] (e2.center) to (e3.center);
\draw [edgestyle] (e3.center) to (e1.center);
\draw [edgestyle] (e1.center) to (e2.center);

\draw [vecstyle, black] (test1.center) to ($(test1) + \veclength*(0,1)$);
\draw [vecstyle, black] (test1.center) to ($(test1) - \veclength*(0,1)$);
\draw [vecstyle, black] (test1.center) to ($(test1) + \veclength*(\costhirty,1/2)$);
\draw [vecstyle, black] (test1.center) to ($(test1) - \veclength*(\costhirty,1/2)$);
\draw [vecstyle, black] (test1.center) to ($(test1) + \veclength*(-\costhirty,1/2)$);
\draw [vecstyle, black] (test1.center) to ($(test1) - \veclength*(-\costhirty,1/2)$);

\draw [vecstyle, black] (test2.center) to ($(test2) + \veclength*(1,0)$);
\draw [vecstyle, black] (test2.center) to ($(test2) + \veclength*(\cosfortyfive,\cosfortyfive)$);
\draw [vecstyle, black] (test2.center) to ($(test2) + \veclength*(0,1)$);
\draw [vecstyle, black] (test2.center) to ($(test2) + \veclength*(-\cosfortyfive,\cosfortyfive)$);
\draw [vecstyle, black] (test2.center) to ($(test2) - \veclength*(1,0)$);

\node [inner sep = 1pt] (label) at ($(O) - \veclength*(1.25,.5)$) {$\metcone(x)$};
\draw [vecstyle,MyBlue, bend left] (label.150) to ($(test1) + .9*\veclength*(-1/2,-\costhirty)$);
\draw [vecstyle,MyBlue, bend right] (label.-150) to ($(test2) + .9*\veclength*(-\costhirty,1/2)$);

\end{tikzpicture}
} 
\hfill
\subfigure
[Minimal-rank extendability..]
{\label{fig:cones-Shah}
\begin{tikzpicture}
[>=stealth,
vecstyle/.style = {->, line width=.25pt, black},
edgestyle/.style={-, line width=.5pt, black},
nodestyle/.style={circle, fill=Black,inner sep = .5pt},
plotstyle/.style={color=DarkGreen!80!Cyan,thick}]

\def\unit{.2\textwidth}
\def\costhirty{0.8660256}
\def\cosfortyfive{0.7071068}
\def\veclength{.5}

\coordinate (O) at (0,0);

\coordinate (e2) at (-\costhirty*\unit,-\unit/2);
\coordinate (e3) at (\costhirty*\unit,-\unit/2);
\coordinate (e1) at (0,\unit);

\coordinate (test1) at (-.05*\unit,.4*\unit);
\coordinate (test2) at (0,-\unit/2);

\node [nodestyle] (O) at (O) {};
\node [nodestyle, label = {[label distance=3] 210:{$\bvec_{2}$}}] (e2) at (e2) {};
\node [nodestyle, label = {[label distance=3] -30:{$\bvec_{3}$}}] (e3) at (e3) {};
\node [nodestyle, label = {[label distance=5] 90:{$\bvec_{1}$}}] (e1) at (e1) {};

\filldraw [MyBlue!15] (test1) circle (1.1*\veclength);

\node [nodestyle] (test1) at (test1) {};
\node [nodestyle] (test2) at (test2) {};

\draw [line width=.25pt,black!50] (O.center) to (e2.center);
\draw [line width=.25pt,black!50] (O.center) to (e3.center);
\draw [line width=.25pt,black!50] (O.center) to (e1.center);

\draw [vecstyle] (e2.center) to ($1.1*(e2)$);
\draw [vecstyle] (e3.center) to ($1.1*(e3)$);
\draw [vecstyle] (e1.center) to ($1.1*(e1)$);

\draw [edgestyle] (e2.center) to (e3.center);
\draw [edgestyle] (e3.center) to (e1.center);
\draw [edgestyle] (e1.center) to (e2.center);
\draw [MyBlue, line width = .75pt] ($(e2) - 1.05*\veclength*(1,0)$) to ($(e3) + 1.05*\veclength*(1,0)$);

\draw [vecstyle, black] (test1.center) to ($(test1) + \veclength*(0,1)$);
\draw [vecstyle, black] (test1.center) to ($(test1) - \veclength*(0,1)$);
\draw [vecstyle, black] (test1.center) to ($(test1) + \veclength*(\costhirty,1/2)$);
\draw [vecstyle, black] (test1.center) to ($(test1) - \veclength*(\costhirty,1/2)$);
\draw [vecstyle, black] (test1.center) to ($(test1) + \veclength*(-\costhirty,1/2)$);
\draw [vecstyle, black] (test1.center) to ($(test1) - \veclength*(-\costhirty,1/2)$);

\draw [vecstyle, black] (test2.center) to ($(test2) + \veclength*(1,0)$);
\draw [vecstyle, black] (test2.center) to ($(test2) - \veclength*(1,0)$);

\node [inner sep = 1pt] (label) at ($(O) - \veclength*(1.25,.5)$) {$\metcone(x)$};
\draw [vecstyle,MyBlue, bend left] (label.150) to ($(test1) + .9*\veclength*(-1/2,-\costhirty)$);
\draw [vecstyle,MyBlue] (label.-150) to ($(test2) + 1.75*\veclength*(-\costhirty,0)$);
\end{tikzpicture}
}
\label{fig:cones}
\caption{Admissible sets under the Euclidean and Shahshahani metrics.
For $x\in\intstrat$, we have $\metcone(x) = \tcone_{\strat}(x)$.
For $x\in\bd(\strat)$, we still have $\metcone(x) = \tcone_{\strat}(x)$ in the Euclidean case,
but the Shahshahani metric can only be extended to the tangent space  
$\metcone(x) = \tspace_{\strat}(x)$.}
\end{figure}


The restriction to $\domg(x)$ is needed because the norm  $\norm{z}_{x}^{2}$ is only defined for $z\in\domg(x)$.
When $g$ is extendable, the only case in which  $\domg(x)$ is not all of  $\R^{\pures}$ occurs when $x \in \bd(\strat)$ and $g$ is minimal-rank extendable, in which case $\domg(x) = \tspace_{\strat}(x) = \R^{\supp(x)}$ (cf.~\cref{ex:Shah-ext}).

\subsection{Riemannian game dynamics}
\label{sec:RieDDef}

With all this at hand, we are finally in a position to extend the definition of the dynamics to all of $\strat$.
Concretely, building on \eqref{eq:RGD-metric}, the \emph{Riemannian game dynamics} induced by an extendable $g$ are
\begin{equation}
\label{eq:RGD}
\tag{RmD}
\dot x
	= \argmax_{z\in\metcone(x)} \, \bracks*{\gain^\payv(z;x) - \cost(z;x)}
	= \argmax_{z\in\metcone(x)} \, \bracks*{\sum_{\pure\in\pures}\payv_{\pure}(x)z_{\pure} - \tfrac{1}{2} \norm{z}_{x}^{2}}
\end{equation}

Equation \eqref{eq:RGD-sharp} showed that \eqref{eq:RGD} can be expressed at interior states as
\begin{subequations}
\label{eq:RGD-coords}
\begin{equation}
\label{eq:RGD-coords1}
\dot x
	= \payv^{\sharp}(x) - \frac{\product{\payv^{\sharp}(x)}{\normal(x)}_{x}}{\norm{\normal(x)}^{2}_{x}}\, \normal(x)
	= \payv^\sharp(x) - \frac{\sum_{\pure\in\pures} \payv^{\sharp}_{\pure}(x)}{\sum_{\pure\in\pures} \normal_{\pure}(x)}\,\normal(x)
\end{equation}
where $\payv^{\sharp}(x) = g^{\sharp}(x) \payv(x)^{\ttop}$ and $\normal(x) = g^{\sharp}(x)\onecol$.
After a slight rearrangement, we can also express the dynamics as a linear transformation of payoffs $\payv(x)$:
\begin{equation}
\label{eq:RGD-coords2}
\dot x_{\pure}
	= \sum_{\purealt\in\pures} \left[ g^\sharp_{\pure\purealt}(x) - \frac{\normal_{\pure}(x) \normal_{\purealt}(x)}{\insum_{\gamma} \normal_{\gamma}(x)} \right] \payv_{\purealt}(x).
\end{equation}
\end{subequations}
Equation \eqref{eq:RGDMP} in \cref{app:dynamics} provides a concise third expression for the dynamics on $\intstrat$ in terms of a pseudoinverse matrix. 

If $g$ is minimal-rank extendable, \cref{prop:RGDMinFormula} in \cref{app:geometry} shows that \eqref{eq:RGD-coords} holds for all $x\in\strat$, provided that one uses $g^{\sharp}(x)$ in the definition \eqref{eq:sharp-vecs} of $\payv^{\sharp}(x)$ and $\normal(x)$.
\cref{prop:RGDMinFormula} also shows that, in this case, one need only take the sums in the formulas \eqref{eq:RGD-coords} over the strategies in the support of $x$. 

If instead $g$ is full-rank extendable, extending \eqref{eq:RGD-coords} to boundary states requires solving a convex program whose inequality constraints may be active.
For this reason, coordinate formulas for \eqref{eq:RGD} may depend on the support of $x$ \textendash\ and, indeed, \eqref{eq:RGD} may fail to be continuous at the boundary of $\strat$ (see \cref{ex:PD-full} below).
With this in mind, it will be convenient to call the dynamics generated by minimal-rank extendable metrics \emph{continuous Riemannian dynamics}, and those generated by full-rank extendable metrics \emph{discontinuous Riemannian dynamics}.

\subsection{Geometric derivation of the dynamics}
\label{sec:geometry}

In \eqref{eq:RGD}, the dynamics' vector of motion from $x$ is defined to maximize the difference between the gain $\gain^\payv(z;x) = \sum_{\pure\in\pures}\payv_{\pure}(x)z_{\pure}$ 
 and the cost of motion $\cost(z;x) = \frac{1}{2}\norm{z}_{x}^{2}$ over the set of admissible vectors $z\in\metcone(x)$.
We now show how these dynamics can be derived using a purely geometric approach, generalizing \citegen{Sha79} derivation of the replicator dynamics in common interest games, and \citegen{NZ97} definition of the Euclidean projection dynamics.
In what follows, we rely on some basic ideas from Riemannian geometry;
for a comprehensive treatment, we refer again to \cite{Lee97}.

To start, we introduce ideas about duality that explain our convention of writing payoffs using row vectors and the notations $\payv^\sharp(x)$ and $\normal(x)$ from \cref{sec:interior}.
As in \cref{sec:metrics}, let $W$ is a subspace of $\R^\pures$. 
A linear functional $\omega\from\vecspace\to\R$ acting on vectors $w \in W$ is called a \emph{covector}, and the space $\dspace$ of such functionals is called the \emph{dual space} of $\vecspace$.
We write $\braket{\omega}{w}$ for the action of a covector $\omega \in \dspace$ on a vector $w \in \vecspace$;
to emphasize this pairing,
the elements of $\vecspace$ and $\dspace$ are also referred to as \emph{primal} and \emph{dual} \emph{vectors} respectively.
When $W=\R^\pures$, we use the standard basis of $\R^\pures$ to write everything in matrix notation, and distinguish vectors and covectors by writing
primal vectors $w\in\R^\pures$ as column vectors
and
dual vectors $\omega\in(\R^\pures)^\ast$ as row vectors.
The action $\braket{\omega}{w}$ of $\omega$ on $w$ is then given by the matrix product $\omega\,w = \insum_{\pure} \omega_{\pure} w_{\pure}$.

After mild manipulations, the definitions of \acl{NE} \eqref{eq:Nash}, positive correlation \eqref{eq:PC}, potential games \eqref{eq:potential} and contractive games \eqref{eq:contract} can be expressed in the form $\insum_{\pure} \payv_{\pure}(x) z_{\pure}$, where $z$ is a tangent vector.
Put differently, the payoff ``vector'' $(\payv_{\pure}(x))_{\pure \in \pures}$ acts as a linear functional on displacement vectors, and so should be regarded as a \emph{covector}.  This is why we represent payoffs $\payv(x)$ in matrix notation as row vectors.

\begin{example}
\label{ex:potential2}
The defining property \eqref{eq:potential} of potential games can be expressed as
\begin{equation}
\label{eq:potential2}
\braket{D\pot(x)}{z}
	= \braket{\payv(x)}{z}
	\quad
	\text{for all $z \in \R^{\pures}$ and all $x\in\strat$.}
\end{equation}
On the \acl{LHS}, $D\pot(x)$ denotes the derivative of $\pot$ at $x$, a linear functional that acts on tangent vectors $z\in\R^{\pures}$ to yield the \emph{directional derivative} $\pot'(x;z)$.
Thus, \eqref{eq:potential2} can be expressed as an equality between covectors, viz.
$\payv(x) = D\pot(x)$.
\end{example}

Returning to our derivation of game dynamics, our aim in what follows is to find a vector field $x \mapsto \dynfield(x) \in \tcone(x)$ that agrees to the greatest possible extent with the payoff covector field $x \mapsto \payv(x)$, where this  ``agreement'' is defined in terms of the Riemannian metric $g$. The derivation requires two steps:
\begin{inparaenum}%
[\itshape i\upshape)]
\item
using a canonical transformation to convert the covector field into a vector field;
and
\item
projecting this field onto the cone of admissible vectors of motion.
\end{inparaenum}

For the first step, fix a Riemannian metric $g$ on $\orthant$ that is extendable to $\clorthant$ as defined in \cref{sec:boundary}.
The \emph{primal equivalent} of a covector $\omega \in(\R^\pures)^\ast$ at $x\in\clorthant$ is the (necessarily unique) vector $\omega^\sharp \in \domg(x)$ such that
\begin{equation}
\label{eq:sharp}
\braket{\omega}{w}
	=\product{\omega^{\sharp}}{w}_x	
	\quad
	\text{for all $w\in\domg(x)$}.
\end{equation}
In matrix notation, it is easy to verify that 
\begin{equation}
\label{eq:sharp-coords}
\omega^\sharp
	= g^{\sharp}(x) \omega^{\ttop},
\end{equation}
in agreement with the definition $\payv^\sharp(x) = g^{\sharp}(x) \payv(x)^{\ttop}$ from \eqref{eq:sharp-vecs}. 

For the second step, we transform each vector $\payv^\sharp(x)$ into a $g$-admissible vector by projecting it onto $\metcone(x)$.
Specifically, for all $x \in \strat$ and $w\in\domg(x)$, the \emph{projection} of $w$ at $x$ is defined as
\begin{equation}
\label{eq:tproj}
\tproj_{x}(w)= \argmin_{z\in\metcone(x)} \norm{w - z}_{x}.
\end{equation}
The induced Riemannian dynamics are then defined as
\begin{equation}
\label{eq:RGD-proj}
\dot x
	= \tproj_{x}(\payv^{\sharp}(x)).
\end{equation}

When $x \in \intstrat$ is interior, we have $\metcone(x)=\zspace^{\pures}$ by default, so $\tproj_{x}(w)$ is simply the orthogonal projection of $w\in\domg(x) = \R^{\pures}$ onto $\zspace^{\pures}$ with respect to $g$.
Accordingly, $\tproj_{x}(w)$ can be computed by finding a normal vector to $\zspace^{\pures}$ and subtracting this vector's contribution to $w$ (as in the first step of the Gram-Schmidt orthonormalization process).
To carry this out, observe that $\insum_{\pure} z_{\pure} = 0$ for all $z\in\zspace^{\pures}$, so the vector $\normal(x) = g^{\sharp}(x)\onecol$ defined in \eqref{eq:sharp-vecs} satisfies
\begin{equation}
\label{eq:CheckNormal}
\product{\normal(x)}{z}_{x}
	= \normal(x)^{\ttop} g(x) z
	= \onecol^{\ttop} g^{\sharp}(x) g(x) z
	= 0
	\quad
	\text{for all $z\in\zspace^{\pures}$}.
\end{equation}
This shows that $\normal(x)$ is a \emph{normal vector} to $\tspace_{\strat}(x)$ with respect to $g(x)$.
Thus, for all $x\in\intstrat$, we can express the \acl{RHS} of \eqref{eq:RGD-proj} as
\begin{flalign}
\label{eq:tproj2}
\tproj_{x}(\payv^{\sharp}(x))
	&= \payv^{\sharp}(x) - \proj_{\normal(x)} \payv^{\sharp}(x)
	= \payv^{\sharp}(x) - \frac{\product{\normal(x)}{\payv^{\sharp}(x)}_{x}}{\norm{\normal(x)}_{x}^{2}} \normal(x),
\end{flalign}
in agreement with \eqref{eq:RGD-sharp}.

More generally, for any state  $x \in \strat$ we have
\begin{flalign}
\label{eq:GeoRGDComp}
\tproj_{x}(\payv^{\sharp}(x))
	&= \argmin_{z\in\metcone(x)} \norm{\payv^{\sharp}(x) - z}_{x}
	\notag\\
	&= \argmin_{z\in\metcone(x)}
	\bracks[\Big]{\norm{\payv^{\sharp}(x)}^2_{x} +\norm{z}^2_{x} -2\product{\payv^{\sharp}(x)}{z}_{x}}
	\notag\\
	&= \argmax_{z\in\metcone(x)}
	\bracks[\Big]{\product{\payv^{\sharp}(x)}{z}_{x}-\tfrac12\norm{z}^2_{x} -\tfrac12\norm{\payv^{\sharp}(x)}^2_{x}}
	\notag\\
	&= \argmax_{z\in\metcone(x)}
	\bracks[\Big]{\braket{\payv(x)}{z} -\tfrac12\norm{z}^2_{x}}.
\end{flalign}
The dynamics \eqref{eq:RGD-proj} and \eqref{eq:RGD} are therefore identical.
We will take advantage of this geometric representation of \eqref{eq:RGD} freely in what follows.

\begin{remark}
In addition to building on Kimura's and Shahshahani's derivations of the replicator dynamics, the dual representations of Riemannian game dynamics have a close analogue in a class of game dynamics called \emph{target projection dynamics}  \citep{FBMTG94,San05}.
These dynamics are defined on $\strat$ as
\begin{equation}
\label{eq:TPJ}
\dot x
	= \argmin_{x'\in\strat} \norm{\payv(x) - x'}_{2}^{2} - x,
\end{equation}
Using a version of \eqref{eq:GeoRGDComp}, \cite{TV09} showed that \eqref{eq:TPJ} can also be expressed as 
\begin{equation}\label{eq:TPJ2}
\dot x
	= \argmax_{x'\in\strat}
	\bracks[\Big]{\braket{\payv(x)}{x'} - \tfrac{1}{2} \norm{x' - x}_{2}^{2}} - x.
\end{equation}
\end{remark}

\section{Examples}
\label{sec:examples}

We now present a variety of examples of Riemannian game dynamics.
We start by
extending the interior expressions \eqref{eq:PD-interior} and \eqref{eq:RD-interior} for our
two prototypical dynamics
to allow for boundary states:

\begin{example}
[Replicator dynamics revisited]
\label{ex:RD-full}
Let $g$ be the Shahshahani metric, so $g^\sharp_{\pure\purealt}(x) = \delta_{\pure\purealt}x_{\purealt}$, $\normal_{\pure}(x) = x_{\pure}$, and $\payv^{\sharp}_{\pure}(x)=x_{\pure} \payv_{\pure}(x)$.
Since $g$ is minimal-rank extendable, \eqref{eq:RGD-coords} yields the (continuous) Riemannian dynamics
\begin{equation}
\tag{\ref*{eq:RD}}
\dot x_{\pure}
	= x_{\pure} \left[ \payv_{\pure}(x) - \insum_{\purealt} x_{\purealt} \payv_{\purealt}(x) \right],
\end{equation}
which are the replicator dynamics of \cite{TJ78}.
The dynamics' continuity is reflected in the fact that the formula \eqref{eq:RD} is valid throughout $\strat$.
\end{example}

\begin{example}
[Projection dynamics revisited]
\label{ex:PD-full}
Let $g$ be the Euclidean metric.
Starting from formulation \eqref{eq:RGD-proj}, \cite{LS08} derived the following representation of the associated (discontinuous) Riemannian dynamics:
\begin{equation}
\label{eq:PDFormula}
\dot x_{\pure}
	= \begin{cases}
	\payv_{\pure}(x) - \abs{\pures(x)}^{-1} \sum_{\purealt\in\pures(x)} \payv_{\purealt}(x)
		&\quad
		\text{if $\pure\in\pures(x)$,}
		\\
	0
		&\quad
		\text{otherwise,}
	\end{cases}
\end{equation}
where $\pures(x)$ is a subset of $\pures$ that maximizes the average $\abs{\pures'}^{-1} \sum_{\purealt\in\pures'} \payv_{\purealt}(x)$ over all subsets $\pures'\subset\pures$ that contain $\supp(x)$.
These are the projection dynamics \eqref{eq:PD} of \cite{NZ97}.
The discontinuity of \eqref{eq:PD} is reflected in the appearance of $\supp(x)$ in \eqref{eq:PDFormula} via the definition of $\pures(x)$.
\end{example}

\begin{remark}
The dynamics \eqref{eq:RD} and \eqref{eq:PD} highlight an important qualitative difference between Shahshahani and Euclidean projections, which is representative of continuous and discontinuous Riemannian dynamics respectively.
The replicator dynamics \eqref{eq:RD} comprise a Lipschitz continuous dynamical system on $\strat$ which preserves the face structure of $\strat$, in that the relative interior of each face of $\strat$ remains invariant.
By contrast, the projection dynamics \eqref{eq:PD} may fail to be continuous at the boundary of $\strat$.
Thus, the relevant notion of a solution to \eqref{eq:PD} is that of a \emph{Carathéodory solution}, which allows for kinks at a measure zero set of times.
As a result, solutions of \eqref{eq:PD} may leave and re-enter the relative interior of any face of $\strat$ in perpetuity.
\end{remark}

The next example generalizes the previous two:

\begin{example}
[The $p$-replicator dynamics]
\label{ex:pReplicator}

For $p \geq 0$,
 let $g_{\pure\purealt}(x) = \delta_{\pure\purealt}x_{\purealt}^{-p}$ denote the $p$-Shahshahani metric introduced in \cref{ex:metric-pShah}.
We then have
$g_{\pure\purealt}^{\sharp}(x) = \delta_{\pure\purealt} x_{\purealt}^{p}$,
$\normal_{\pure}(x) = x_{\pure}^{p}$,
and $\payv_{\pure}^{\sharp}(x) = x_{\pure}^{p} \payv_{\pure}(x)$.
Thus, \eqref{eq:RGD-coords} yields the \emph{$p$-replicator dynamics}
\begin{equation}
\label{eq:pRep}
\dot x_{\pure}
	= x_{\pure}^p\!
	\left(
	\payv_{\pure}(x) -\frac{\sum_{\purealt\in\pures} x_{\purealt}^p \payv_{\purealt}(x)}{\sum_{\purealt\in\pures} x_{\purealt}^p }
	\right),
\end{equation}
valid for all interior $x\in\intstrat$.

These dynamics were first defined by \cite{Har11}.
Since $g$ is minimal-rank extendable if and only if $p > 0$, the dynamics are defined throughout $\strat$ via \cref{eq:pRep} for precisely these values of $p$.
However, the dynamics are only Lipschitz continuous for $p\geq1$; see~\cref{ex:HR} and \cref{sec:wp} below.
Three values of $p$ are worth highlighting:
\begin{enumerate}
\item
For $p=0$, we obtain the projection dynamics \eqref{eq:PD}.

\item
For $p=1$, we obtain the replicator dynamics \eqref{eq:RD}.

\item
For $p=2$, we obtain the \emph{log-barrier dynamics}, a system first examined by \cite{BL89} in the context of convex programming.%
\footnote{The reason for this name is explained in \cref{ex:HR}; see especially \cref{eq:h1h2}.}
\end{enumerate}

In the above dynamics, the value of $p$ parametrizes the costs of changes in the use of less common strategies.
This is illustrated in \cref{fig:portraits}, which presents a collection of $p$-replicator phase portraits 
in standard \acl{RPS}: 
\begin{equation}
\label{eq:RPS}
A =
	\left(
	\begin{array}{ccc}
	0	& -1	& 1
	\\
	1	& 0	& -1
	\\
	-1	& 1	& 0
	\end{array}
	\right).
\end{equation}
When $p = 0$, displacement costs are independent of the current state; thus the circular form of the payoffs \eqref{eq:RPS} generates circular closed orbits, subject to feasibility constraints (\cref{fig:portraits-Eucl}).
As $p$ increases, the costs of motion for uncommon strategies become more important relative to the game's payoffs (\cref{fig:portraits-rep,fig:portraits-qrep}).
As a direct consequence, the closed orbits of the dynamics are ``flattened'' near each face of the simplex, and are
ultimately reshaped into a nearly triangular form (\cref{fig:portraits-pprep}).%
\footnote{That all of these dynamics feature closed orbits is not coincidental \textendash\ see \cref{prop:conservative}.}

\cref{fig:portraits} also illustrates a basic dichotomy between continuous and discontinuous Riemannian dynamics.
In the discontinuous regime ($p = 0$), there is a unique forward solution from every initial condition in $\strat$.
However, solutions may enter and leave the boundary of $\strat$, and solutions from different initial conditions can merge in finite time.
In the smooth regime ($p\geq1$), solutions exist and are unique in forward and backward time, and the support of the state remains fixed along each solution trajectory.
Existence and uniqueness of solutions is treated formally in \cref{sec:wp}.
\end{example}

\begin{figure}[t]
\centering
\subfigure[$p=0$ (projection)]
{\includegraphics[width=.45\textwidth]{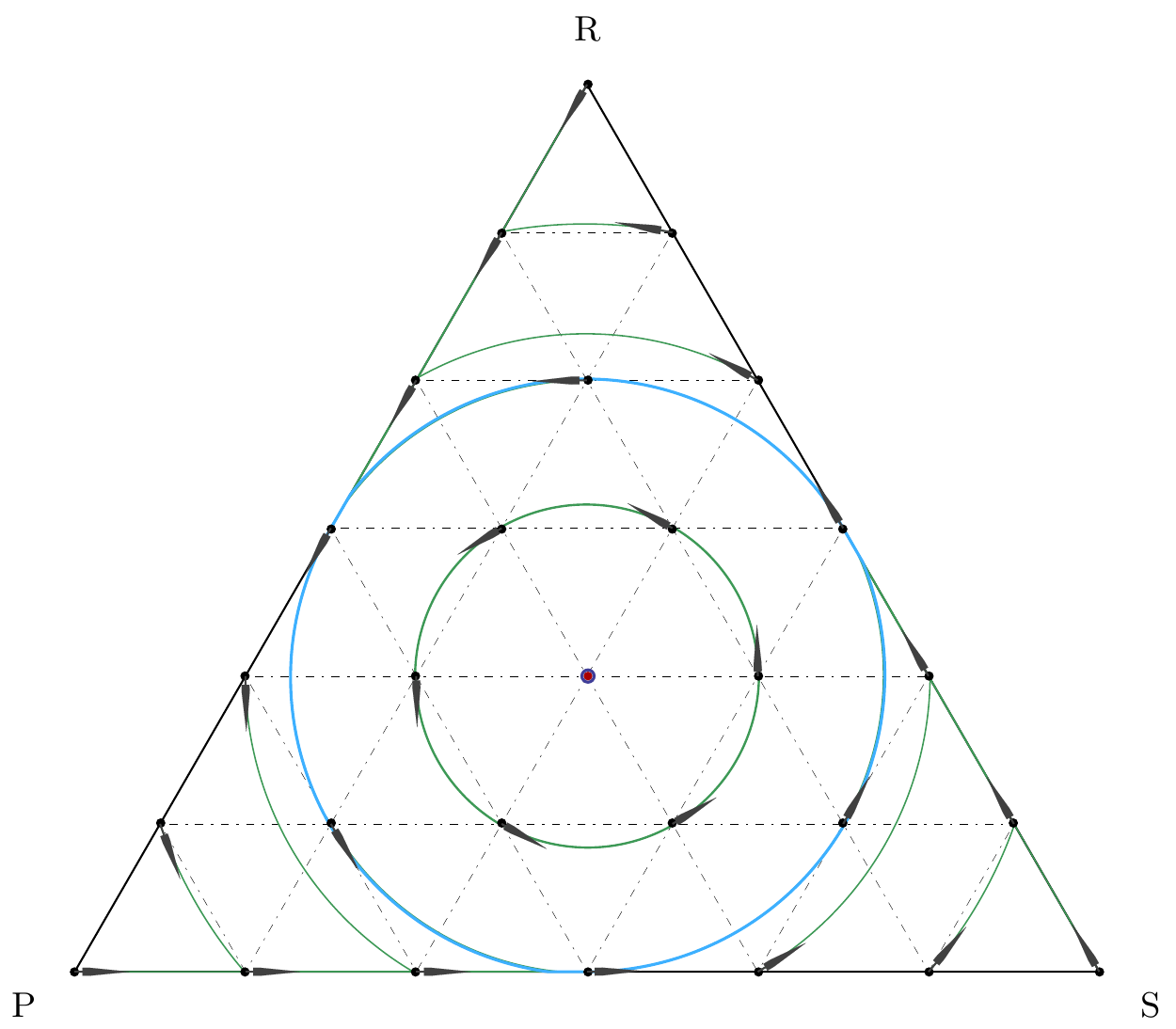}
\label{fig:portraits-Eucl}} 
\hfill
\subfigure[$p=1$ (replicator)]
{\includegraphics[width=.45\textwidth]{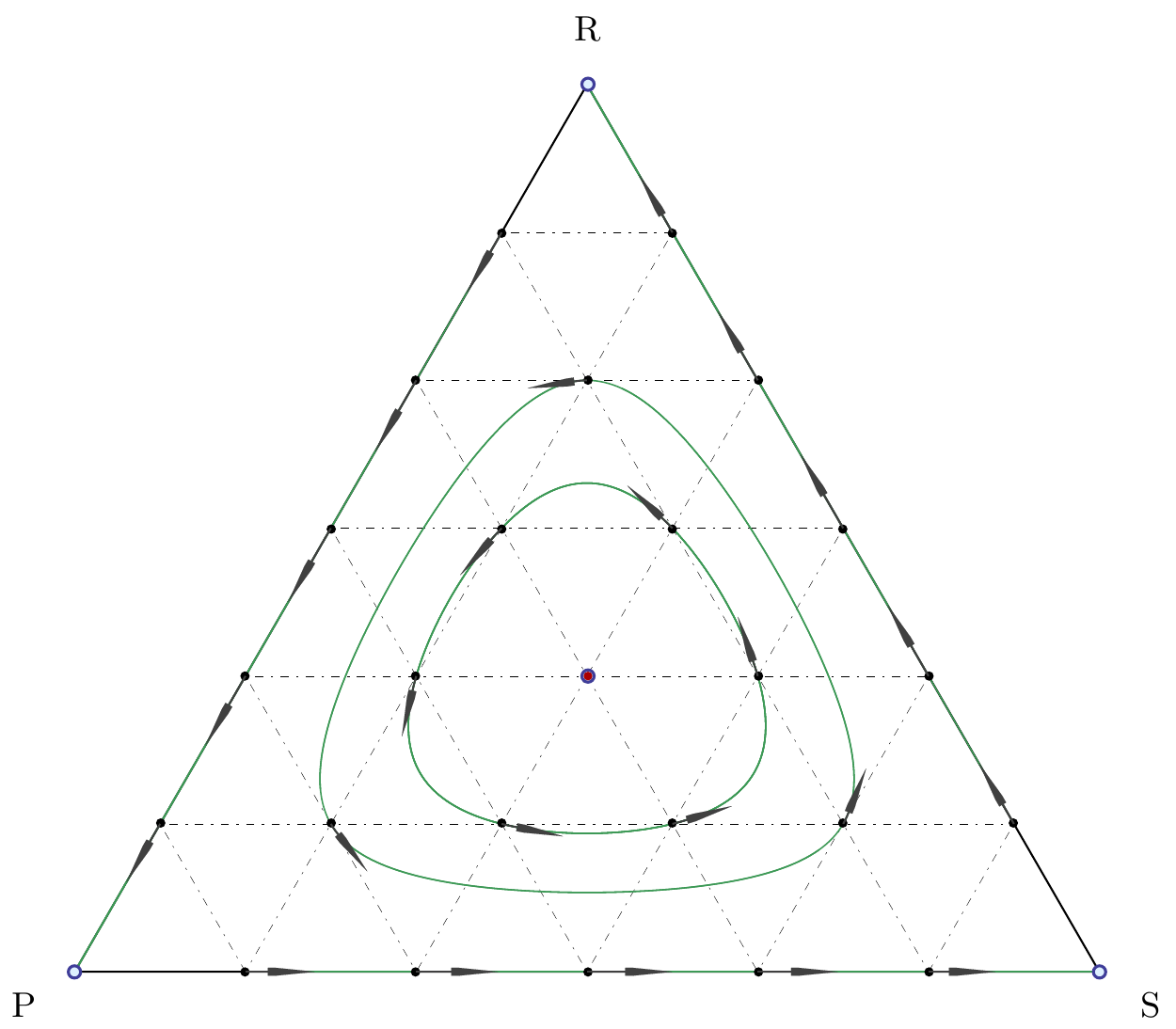}
\label{fig:portraits-rep}}
\\
\subfigure[$p=3/2$]
{\includegraphics[width=.45\textwidth]{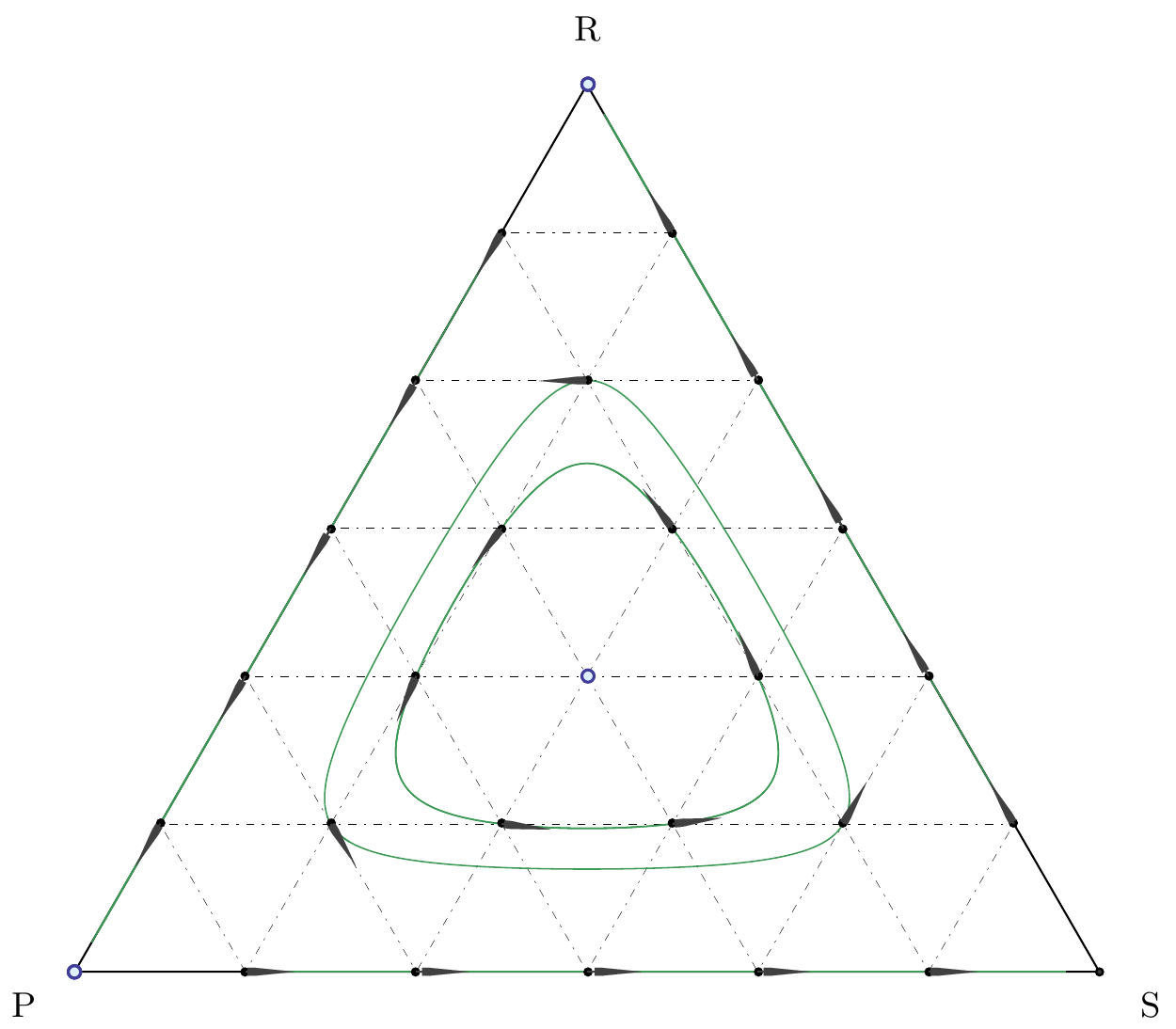}
\label{fig:portraits-qrep}}
\hfill
\subfigure[$p=5$]
{\includegraphics[width=.45\textwidth]{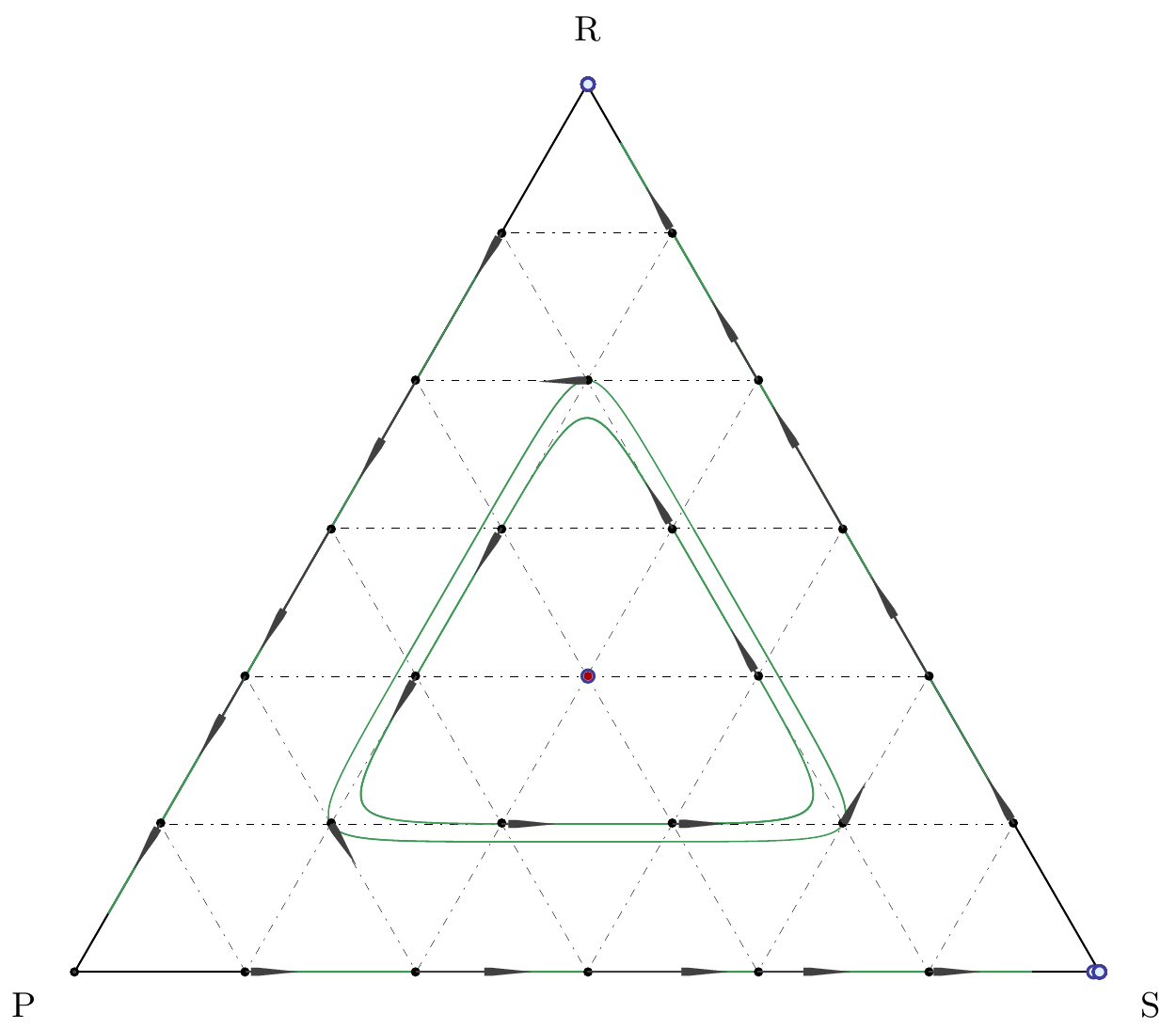}
\label{fig:portraits-pprep}}%
\caption{\small
Phase portraits of the $p$-replicator dynamics in standard \acl{RPS}.
As $p \in [0, \infty)$ increases, the shape of the closed orbits changes from circular to triangular.
When $p = 0$, solutions enter and leave the boundary of the simplex, but forward solutions exist and are unique.
For $p \geq 1$, forward and backward solutions exist, are unique, and their support is constant.
}
\label{fig:portraits}
\end{figure}

\begin{example}[Separable metrics and their dynamics]
\label{ex:separable}
A Riemannian metric $g$ on $\orthant$ is called \emph{separable} if its metric tensor is of the form
\begin{equation}
\label{eq:separable}
g(x)
	= \diag(1/\wt(x_1),\dotsc,1/\wt(x_n)),
\end{equation}
where $\wt \from [0, \infty) \to [0, \infty)$ is a continuous \emph{weighting function}  that is strictly positive on $(0, \infty)$.
For such metrics, we readily get
\begin{equation}
\label{eq:sharp-separable}
g^{\sharp}(x)
	= \diag(\wt(x_1),\dotsc,\wt(x_n)),
\end{equation}
so $g$ is minimal-rank extendable if $\lim_{z \to 0^{+}}\phi(z) = 0$ and full-rank extendable otherwise.

When \eqref{eq:RGD-coords} applies, the dynamics induced by $g$ take the form 
\begin{equation}
\label{eq:SepDyn2}
\dot x_{\pure}
	= \wt(x_{\pure})
	\bracks*{\payv_{\pure}(x) - \frac{\sum_{\purealt} \wt(x_{\purealt}) \payv_{\purealt}(x)}{\sum_{\purealt} \wt(x_{\purealt})}}.
\end{equation}
Ignoring the dynamics' behavior at the boundary, \eqref{eq:SepDyn2} was studied by \cite{Har11} under the name \emph{escort replicator dynamics}, and was further examined by \cite{MS16} and \cite{BM17} in the context of game-theoretic learning (see \cref{sec:RL}).  It is clear that the construction above generalizes immediately to allow different weighting functions for different strategies.
\end{example}

Moving beyond the separable case, Riemannian dynamics can also capture the effects of intrinsic relationships among the game's strategies.  

\begin{example}[Nested replicator dynamics]
\label{ex:nested}
In \cref{ex:metric-nested}, we defined the nested Shahshani metric as 
\begin{equation}
g_{\pure\purealt}(x)
	= \begin{cases}
	\frac{\delta_{\pure\purealt}}{x_{\pure}} + s \frac{1}{x_{\class{\pure}}}
		&\quad
		\text{if $\purealt\in\class{\pure}$},
		\\
	0
		&\quad
		\text{otherwise},
	\end{cases}
\end{equation}
where $\pures_{1},\dotsc,\pures_{m}$ is a partition of $\pures$ into groups 
of intrinsically similar strategies,
$\class{\pure}$ denotes the group containing strategy $\pure$,
$x_{\class{\pure}} = \sum_{\purealt\in\class{\alpha}} x_{\purealt}$,
and $s$ is a positive constant.
A straightforward calculation shows that
\begin{equation}
\label{eq:sharp-nested}
g_{\pure\purealt}^{\sharp}(x)
	= \begin{cases}
	 x_{\pure} \delta_{\pure\purealt} - \frac{s}{1+s} \frac{x_{\pure} x_{\purealt}}{x_{\class{\pure}}}
		&\quad
		\text{if $\purealt\in\class{\pure}$},
		\\
	0
		&\quad
		\text{otherwise}.
	\end{cases}
\end{equation}
It is evident from \eqref{eq:sharp-nested} that the metric $g$ is minimal-rank extendable.
Applying \eqref{eq:RGD-coords}, we find that $g$ generates the \emph{nested replicator dynamics}:
\begin{equation}
\label{eq:NRD}
\tag{NRD}
\dot x_{\pure}
	= x_{\pure} \bracks*{
	 \frac{s}{1+s} \left(\payv_{\pure}(x) - \frac{1}{x_{\class{\pure}}} \sum_{\purealt\in\class{\pure}} x_{\purealt} \payv_{\purealt}(x) \right)	
	 +\frac{1}{1+s}
	\left(\payv_{\pure}(x) - \sum_{\purealt\in\pures} x_{\purealt} \payv_{\purealt}(x) \right)
	}
\end{equation}
if $x_\pure>0$ and $\dot x_{\pure} = 0$ otherwise.

The imitative dynamics \eqref{eq:NRD} were introduced by \cite{MSNRD} to model settings in which agents assess strategies using two distinct procedures: at rate $\frac{s}{1+s}$, an agent only compares the payoff of his current strategy $\alpha$ to those of strategies in group $[\alpha]$; at rate $\frac{1}{1+s}$, they compare the payoff of their current strategy to that of all other strategies.

\cref{fig:nested} presents phase diagrams of the dynamics \eqref{eq:NRD} with $s = 3$ in the standard Rock-Paper-Scissors game.  The two panels illustrate the consequences of two similarity groupings.  In each case, the longest ``side'' of each closed orbit corresponds to the pair of similar strategies, which are switched between more easily than the remaining pairs of dissimilar strategies.
\end{example}

\begin{figure}[t]
\centering
\subfigure
[$\pures_1 =\{\mathrm{R},\mathrm{P}\}$, $\pures_2 =\{\mathrm{S}\}$ ]
{\includegraphics[width=.45\textwidth]{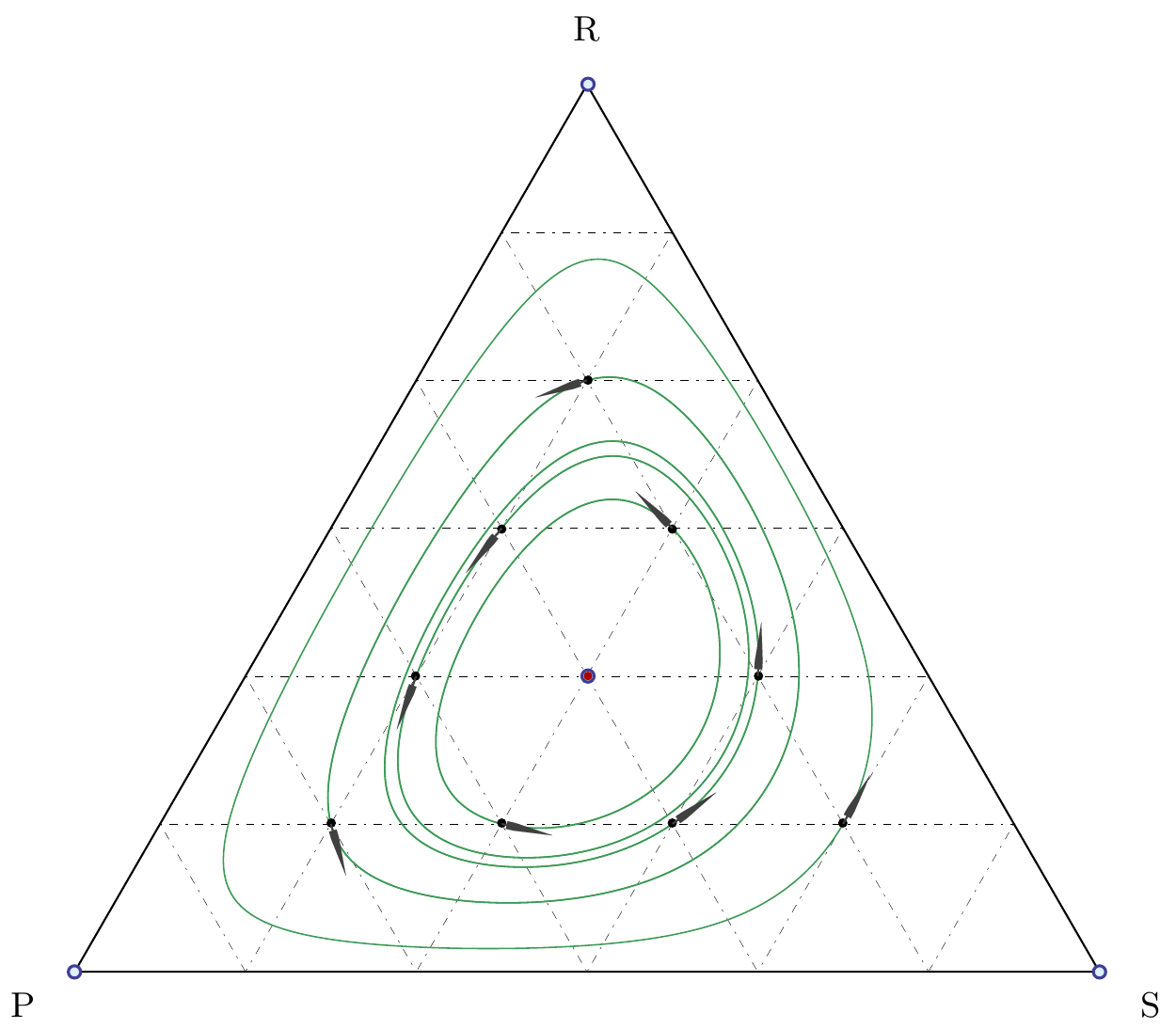}}
\hfill
\subfigure
[$\pures_1 =\{\mathrm{R},\mathrm{S}\}$, $\pures_2 =\{\mathrm{P}\}$]
{\includegraphics[width=.45\textwidth]{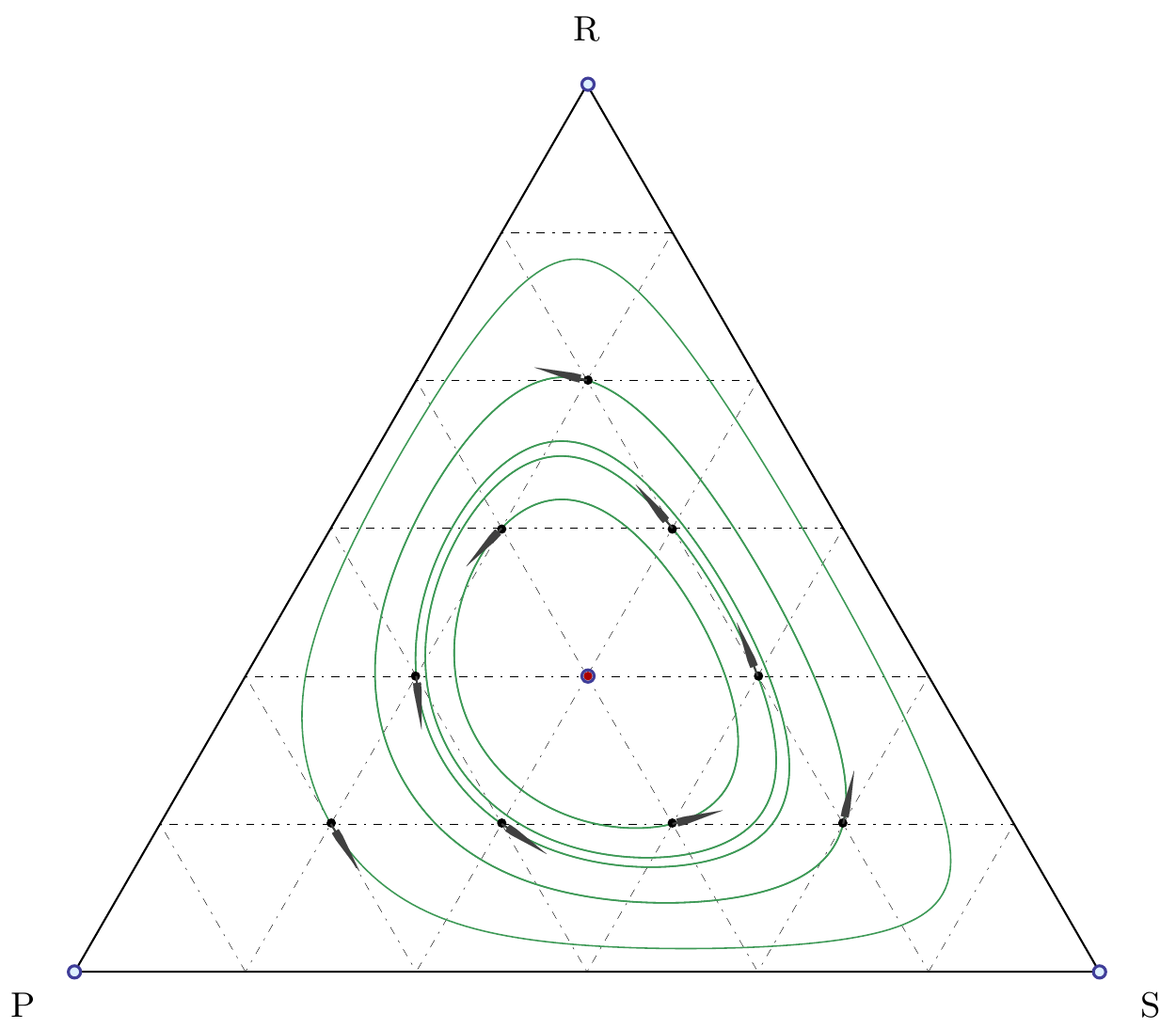}}
\caption{Phase portraits of the nested replicator dynamics in standard \acl{RPS} with $s = 3$ for two similarity groupings.}
\label{fig:nested}
\end{figure}

The following class of dynamics incorporates all of our previous examples. It is examined at depth in Sections \ref{sec:HD} and \ref{sec:RL}:

\begin{example}
[\acl{HR} metrics and their dynamics]
\label{ex:HR}
A generalization of the above class of examples can be obtained by considering Riemannian metrics that are defined as Hessians of convex functions.%
\footnote{For the origins of the idea in geometry, see \cite{Dui01} and references therein;
for applications to convex programming, see \cite{BT03} and \cite{ABB04}.}
To that end, let $h\from\clorthant\to\R$ be a continuous function on $\clorthant$ such that
\begin{enumerate}
[\textup(\itshape i\textup)]
\item
$h$ is $C^{3}$-smooth on every positive suborthant of $\clorthant$.
\item
$\hess h(x)$ is positive definite for all $x\in\orthant$.
\end{enumerate}
Then, $h$ induces a natural Riemannian metric on $\orthant$ defined as
\begin{equation}
\label{eq:HR}
g = \hess h ,
\end{equation}
or, in components:
\begin{equation}
g_{\pure\purealt}(x)
	= \frac{\pd^{2} h(x)}{\pd x_{\pure} \pd x_{\purealt}}.
\end{equation}
When this is the case, we say that $g$ is a \acdef{HR} metric and we refer to $h$ as the \emph{metric potential} of $g$.

As an example, the metric \eqref{eq:metric-nested} that generates the nested dynamics \eqref{eq:NRD} is an \ac{HR} metric with potential
\begin{equation}
\label{eq:NRPotential}
h(x)
	= \sum_{\pure\in\pures} x_{\pure} \left(
	\log x_{\pure} + s \log x_{\class{\pure}}
	\right).
\end{equation}
Moreover, every separable metric of the form \eqref{eq:separable} is an \ac{HR} metric with potential
\begin{equation}
\label{eq:decomposable}
h(x)
	= \sum_{\pure\in\pures} \theta(x_{\pure})
\end{equation}
for some smooth function $\theta\from[0,+\infty)\to\R$ with $1/\theta''(z) = \wt(z)$.
In particular, for $p\notin\{1,2\}$, the $p$-replicator dynamics are generated by the potential
\begin{equation}
\label{eq:hp}
h_{p}(x) = \sum_{\pure\in\pures} \theta_{p}(x_{\pure})
	\quad
	\text{with}
	\quad
	\theta_p(z)
= \tfrac{1}{(p-1)(p-2)}z^{2-p},
\end{equation}
and for $p=1$ and $p=2$, the corresponding potential functions are
\begin{equation}
\label{eq:h1h2}
h_1(x)
	= \sum_{\pure\in\pures} x_{\pure} \log x_{\pure}
	\quad
	\text{and}
	\quad
h_{2}(x)
	= -\sum_{\pure\in\pures} \log x_{\pure},
\end{equation}
respectively.%
\footnote{It is possible to define the potential $h_{p}$ for all values of $p$ using a single formula.
Let $\theta_p(z) = (z^{2-p} + p(p-2)z - (p-1)^{2})/((p-1)(p-2))$ when $p \ne \{1, 2\}$, and define $\theta_1$ and $\theta_2$ by analytic continuation.
Linear and constant terms do not affect the resulting metric, and the explicit formulas for $\theta_1$ and $\theta_2$ follow from the fact that $\lim_{a \to 0} (z^{a} - 1)/a = \log z$.}
The values $p = 0$, $p =1$, and $p =2$ partition the class of $p$-replicator dynamics into seven cases whose properties we summarize in \cref{tab:pReplicator}.%
\footnote{When $p\geq2$, the potential $h_{p}$ becomes infinite on the boundary of $\clorthant$, violating a standing assumption for $h$;
we address this technicality in \cref{rem:GenHess}.
Also, the (negative) \emph{Tsallis entropy} \citep{Tsa88} mentioned in \cref{tab:pReplicator} is defined as $S_{q}(x) = (q-1)^{-1} \sum_{\pure} (x_{\pure}^{q} - x_{\pure})$ for $q\in(0,1)$.}


\begin{table}[tbp]
\centering
\renewcommand{\arraystretch}{1.4}
\small
\begin{tabular}{cccc}
\hline
\noalign{\vspace{1pt}}
$p $ 
	&\textsc{name}
	&\textsc{regularity}
	&\textsc{potential}
	\\
\hline
\hline
$0$ &projection
	&discontinuous
	&quadratic
	\\
\hline
$(0,1)$
	&------
	&not Lipschitz
	&power law
	\\
\hline
$1$	
	&replicator
	&smooth
	&Gibbs entropy
	\\
\hline
$(1,2)$
	&------
	&smooth
	&Tsallis entropy
	\\
	\hline
$2$
	&log-barrier	
	&smooth
	&logarithmic
	\\
\hline
$(2,\infty)$
	&------
	&smooth
	&inverse power law
	\\
\hline
\end{tabular}
\vspace{2ex}
\caption{\small
Regularity of the $p$-replicator dynamics and behavior of the metric potential function $h_p$.
}
\label{tab:pReplicator}
\end{table}

Definition \eqref{eq:HR} is an integrability condition on the matrix field $g$.
As with vector fields on simply connected domains, this can be characterized by a symmetry condition on the derivatives of $g$,%
\footnote{This characterization follows from the integrability condition for ordinary vector fields (i.e. symmetry of the Jacobian matrix) and the symmetry of $g(x)$.}
namely
\begin{equation}
\label{eq:integrability}
\frac{\partial g_{\pure\gamma}}{\partial x_\purealt}
	=\frac{\partial g_{\purealt\gamma}}{\partial x_\pure}
	\quad
	\text{for all $\pure,\purealt,\gamma\in\pures$}.
\end{equation}
Conditions \eqref{eq:HR} and \eqref{eq:integrability} differ fundamentally from integrability conditions appearing in previous work on game dynamics, which are imposed on the \emph{vector fields} that define the dynamics.%
\footnote{See \cite{HMC01b}, \cite{HS09}, and \cite{San10c, San14}.}
In \cref{sec:HD}, we show that this integrability property provides important theoretical tools for the analysis of the induced Riemannian dynamics,
which we call \emph{Hessian game dynamics}.
\end{example}

\section{Microfoundations via revision protocols}
\label{sec:protocols}

To provide microfoundations for deterministic game dynamics \eqref{eq:ED}, one typically specifies a stochastic revision process that induces \eqref{eq:ED} in the so-called ``mean field'' limit.
To do so, suppose that agents in the population are recurrently chosen at random and given the opportunity to switch strategies.
What agents do when facing such opportunities is described by a \emph{revision protocol} $\rho$ whose components $\rho_{\pure\purealt}(x,\pi)$ describe the rates at which $\pure$-strategists who have received revision opportunities switch to strategy $\purealt$, as a function of the current population state $x$ and payoff vector $\pi$.%
\footnote{\cite{Wei95} and \cite{BW96} introduce revision protocols for imitative dynamics.
\cite{San10,San15} extends this approach to more general classes of dynamics.}

Together, a population game $\game\equiv\game(\pures,\payv)$ and a revision protocol $\rho$ induce the \emph{mean dynamics}:
\begin{equation}
\label{eq:MD}
\tag{MD}
\dot x_{\pure}
	= \sum_{\purealt \ne \pure}
	\bracks[\big]{x_{\purealt} \rho_{\purealt\pure}(x, \payv(x)) - x_{\pure} \rho_{\pure\purealt}(x, \payv(x))},
\end{equation}
which describe the rate of change in the use of each strategy $\pure$ as the difference between inflows into $\pure$ from other strategies and outflows from $\pure$ to other strategies.
For a fixed protocol $\rho$, \eqref{eq:MD} can be viewed as a map from population games $\payv$ to laws of motion on $\strat$, as described in Section \ref{sec:ED}.%
\footnote{Solutions to \eqref{eq:MD} may further be viewed as approximations to the sample paths of stochastic evolutionary models generated by the game $\game$ and protocol $\rho$:
for a comprehensive treatment, see \cite{BW03} and \cite{RS13}.}

The prototype for this construction is, again, the replicator dynamics \eqref{eq:RD}.
Three well-known protocols that generate \eqref{eq:RD} are:
\begin{subequations}
\label{eq:RDF}
\begin{align}
\label{eq:RDF1}
\rho_{\pure\purealt}(x,\pi)
	&=x_{\purealt} \pi_{\purealt},
	\\
\label{eq:RDF2}
\rho_{\pure\purealt}(x,\pi)
	&=-x_{\purealt} \pi_{\pure},
	\\
\label{eq:RDF3}
\rho_{\pure\purealt}(x,\pi)
	&=x_{\purealt} \left[ \pi_{\purealt} -\pi_{\pure} \right]_{+},
\end{align}
\end{subequations}
with $\pi$ assumed nonnegative in \eqref{eq:RDF1} and nonpositive in \eqref{eq:RDF2}.%
\footnote{Since the replicator dynamics (and all Riemannian game dynamics) are invariant to equal shifts in all strategies' payoffs, these assumptions about payoffs are innocuous.}
The $x_{\purealt}$ appearing in the right-hand sides allows us to interpret \crefrange{eq:RDF1}{eq:RDF3} as imitative protocols, with a revising agent picking a candidate strategy by observing the choice and the payoff of a randomly chosen opponent.
The protocols differ in how payoffs determine the rates at which switches are consummated.
Protocols \eqref{eq:RDF1} and \eqref{eq:RDF2}, due to \cite{Wei95} and \cite{BW96}, are respectively called \emph{imitation of success} and \emph{imitation driven by dissatisfaction}.
In the former, imitation rates increase linearly in the opponent's payoff;
in the latter, imitation rates decrease linearly in the revising agent's own payoff.
Protocol \eqref{eq:RDF3} is due to \cite{Hel92} and \cite{Sch98}, and is called \emph{pairwise proportional imitation}.
Under \eqref{eq:RDF3}, a revising agent only considers switching if the opponent's payoff is higher than their own, and then does so at a rate proportional to the payoff difference.
Substituting any of these protocols into \eqref{eq:MD} and rearranging yields the replicator dynamics \eqref{eq:RD}.

We now show that the revision protocols from this example can be generalized to cover wider ranges of Riemannian game dynamics, focusing again on
interior population states:%
\footnote{Under minimal-rank extendible metrics, the result to follow also applies on the boundary. Handling boundary states under full-rank extendable metrics requires modifications of the sort described in \cite{LS08}, a direction we do not pursue here.}

\begin{proposition}
\label{prop:RieFound}
Let $g$ be an extendable Riemannian metric such that $g^{\sharp}(x)$ is nonnegative for all $x\in\intstrat$.
Then up to a change of speed, the following protocols generate \eqref{eq:RGD} as their mean dynamics on $\intstrat$:
\begin{subequations}
\label{eq:RGDF}
\begin{flalign}
\label{eq:RGDF1}
\rho_{\pure\purealt}(x,\pi)
	&= \frac{(g^{\sharp}(x)\onecol)_{\pure}}{x_{\pure}} \, (\pi g^{\sharp}(x))_{\purealt},
	\\
\label{eq:RGDF2}
\rho_{\pure\purealt}(x,\pi)
	&= -\frac{(\pi g^{\sharp}(x))_{\pure}}{x_{\pure}} \, (g^{\sharp}(x)\onecol)_{\purealt},
\intertext{where $\pi$ is assumed nonnegative in \eqref{eq:RGDF1} and nonpositive in \eqref{eq:RGDF2}.
In addition, if $g(x)$ is diagonal, the dynamics \eqref{eq:RGD} are also generated \textpar{up to a change of speed} by the protocol}
\label{eq:RGDF3}
\rho_{\pure\purealt}(x,\pi)
	&= \frac{g_{\pure\pure}^{\sharp}(x)}{x_{\pure}} \, g_{\purealt\purealt}^{\sharp}(x) [ \pi_{\purealt} -\pi_{\pure} ]_{+}.
\end{flalign}
\end{subequations}
\end{proposition}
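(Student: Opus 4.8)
The goal is to verify that substituting each of the three protocols in \eqref{eq:RGDF} into the mean dynamics \eqref{eq:MD} yields \eqref{eq:RGDNC} on the interior $\intstrat$. The master computation is to evaluate
\[
\dot x_\alpha = \insum_{\beta}\bigl(\urp_{\beta\alpha}(x,\payv(x)) - \urp_{\alpha\beta}(x,\payv(x))\bigr)
\]
for each protocol and match it against $\payv^\sharp_\alpha(x)\insum_\gamma \normal_\gamma(x) - \normal_\alpha(x)\insum_\gamma \payv^\sharp_\gamma(x)$. Throughout I will use the identities $\payv^\sharp_\alpha(x) = (\payv(x)\, g^\sharp(x))_\alpha = \insum_\beta g^\sharp_{\alpha\beta}(x)\payv_\beta(x)$ and $\normal_\alpha(x) = (\onerow\, g^\sharp(x))_\alpha = \insum_\beta g^\sharp_{\alpha\beta}(x)$, which are exactly the quantities appearing in the protocols, together with the symmetry $g^\sharp_{\alpha\beta}(x) = g^\sharp_{\beta\alpha}(x)$ of the metric tensor.

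\textbf{Protocols \eqref{eq:RGDF1} and \eqref{eq:RGDF2}.} For \eqref{eq:RGDF1}, write $\urp_{\alpha\beta}(x,\pi) = \normal_\alpha(x)\,\pi^\sharp_\beta$ with $\pi^\sharp = \pi\, g^\sharp(x)$. Then
\[
\insum_\beta \urp_{\beta\alpha}(x,\payv(x)) = \payv^\sharp_\alpha(x)\insum_\beta \normal_\beta(x),
\qquad
\insum_\beta \urp_{\alpha\beta}(x,\payv(x)) = \normal_\alpha(x)\insum_\beta \payv^\sharp_\beta(x),
\]
and subtracting gives \eqref{eq:RGDNC} immediately. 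Protocol \eqref{eq:RGDF2} is handled identically after noting $\urp_{\alpha\beta}(x,\pi) = -\pi^\sharp_\alpha \normal_\beta(x)$, so that $\insum_\beta(\urp_{\beta\alpha} - \urp_{\alpha\beta})$ produces the same two terms with the same signs; the hypothesis that $\pi$ be nonpositive is only there to keep the switch rates nonnegative and does not enter the algebra. Nonnegativity of $g^\sharp(x)$ guarantees $\normal_\alpha(x)\ge 0$ and, under the sign assumption on $\pi$, that $\urp_{\alpha\beta}\ge 0$, so these are legitimate revision protocols.

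\textbf{Protocol \eqref{eq:RGDF3}.} Here one uses diagonality: $g^\sharp_{\alpha\beta}(x) = g^\sharp_{\alpha\alpha}(x)\delta_{\alpha\beta}$, so $\normal_\alpha(x) = g^\sharp_{\alpha\alpha}(x)$ and $\payv^\sharp_\alpha(x) = g^\sharp_{\alpha\alpha}(x)\payv_\alpha(x)$. Writing $\urp_{\alpha\beta}(x,\pi) = \normal_\alpha(x)\normal_\beta(x)[\pi_\beta - \pi_\alpha]_+$, the mean dynamics become
\[
\dot x_\alpha = \normal_\alpha(x)\insum_\beta \normal_\beta(x)\Bigl([\payv_\alpha(x) - \payv_\beta(x)]_+ - [\payv_\beta(x) - \payv_\alpha(x)]_+\Bigr)
= \normal_\alpha(x)\insum_\beta \normal_\beta(x)\bigl(\payv_\alpha(x) - \payv_\beta(x)\bigr),
\]
using the elementary identity $[u]_+ - [-u]_+ = u$. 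Expanding the last expression gives $\normal_\alpha(x)\payv_\alpha(x)\insum_\beta\normal_\beta(x) - \normal_\alpha(x)\insum_\beta \normal_\beta(x)\payv_\beta(x) = \payv^\sharp_\alpha(x)\insum_\gamma\normal_\gamma(x) - \normal_\alpha(x)\insum_\gamma \payv^\sharp_\gamma(x)$, which is again \eqref{eq:RGDNC}. The \emph{main obstacle} — really the only subtlety — is bookkeeping: making sure the "$[\,\cdot\,]_+$" cancellation is applied correctly and that in protocols \eqref{eq:RGDF1}–\eqref{eq:RGDF2} the inflow and outflow sums are paired with the right factors, which is where the symmetry of $g^\sharp$ is implicitly invoked. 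No deeper input is needed; this is a direct substitution argument.
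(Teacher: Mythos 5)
Your proof is correct and takes exactly the paper's approach: the paper's own proof is a one-line instruction to substitute the protocols \eqref{eq:RGDF} into the mean dynamics \eqref{eq:MD} and invoke the identities $\normal(x) = (\onerow\, g^{\sharp}(x))^{\top}$ and $\payv^{\sharp}(x) = (\payv(x)\, g^{\sharp}(x))^{\top}$, which is precisely the computation you spell out. Your added details \textendash\ the $[u]_{+}-[-u]_{+}=u$ cancellation for \eqref{eq:RGDF3} and the observation that the sign assumptions on $\pi$ serve only to keep the switch rates nonnegative \textendash\ are points the paper leaves implicit, and they are handled correctly.
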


\begin{proof}
Substitute \eqref{eq:RGDF1}--\eqref{eq:RGDF3} with $\pi =\payv(x)$, $\payv^{\sharp}(x) = (\payv(x) g^{\sharp}(x))^{\hspace{-1pt}\top}$, and $\normal(x) 
= (\onerow g^{\sharp}(x))^{\hspace{-1pt}\top}$ into \eqref{eq:MD} to obtain
\begin{equation}
\label{eq:SymRmd}
\dot x
	= \payv^\sharp(x)\,\sum_{\purealt\in\pures} \normal_{\purealt}(x)
	- \normal(x)\,\sum_{\purealt\in\pures} \payv^{\sharp}_{\purealt}(x).
\end{equation}
Changing the speed at state $x$ by dividing the right-hand side of \eqref{eq:SymRmd} by $s(x) 
=\insum_{\purealt} \normal_{\purealt}(x)$ yields form \eqref{eq:RGD-coords1} of \eqref{eq:RGD}.
\end{proof}

After a change of speed, the Riemannian dynamics \eqref{eq:RGD} take the symmetric form \eqref{eq:SymRmd}, and this symmetry is a source of the appealing properties of the dynamics established below.
By contrast, the random assignment of revision opportunities implies that, under the mean dynamics \eqref{eq:MD}, the outflow rate from each strategy $\pure$ to other strategies is proportional to the popularity $x_{\pure}$ of the original strategy, resulting in an expression that is not symmetric.
The factor $x_{\pure}$ appearing in the denominators in \eqref{eq:RGDF} also lets us recover the symmetric expression \eqref{eq:SymRmd} from \eqref{eq:MD}. 

The asymmetric treatment of current and candidate strategies under \eqref{eq:RGDF} is illustrated by our running examples:

\begin{example}[$p$-replicator dynamics]
Since $p$-replicator dynamics are generated by the Riemannian metric $g(x) = \diag(1/x_1^{p},\dotsc,1/x_n^{p})$, \eqref{eq:RGDF3} implies that these dynamics are induced by the revision protocols
\begin{equation}
\label{eq:pRepRP}
\rho_{\pure\purealt}(x,\pi)
	= x_{\pure}^{p-1} x_{\purealt}^{p} \, \pospart{\pi_{\purealt} -\pi_{\pure}}.
\end{equation}
When $p=1$, we have $x_{\pure}^{p-1} =1$ and $x_{\purealt}^p = x_{\purealt}$, so \eqref{eq:pRepRP} boils down to the pairwise proportional imitation protocol \eqref{eq:RGDF3} and induces the replicator dynamics \eqref{eq:RD}.
When $p = 0$, we have $x_{\pure}^{p-1} =x_{\pure}^{-1}$ and $x_{\purealt}^p = 1$, so \eqref{eq:pRepRP} gives
\begin{equation}
\label{eq:ProjRP}
\rho_{\pure\purealt}(x,\pi)
	= x_{\pure}^{-1} \, \pospart{\pi_{\purealt} -\pi_{\pure}},
\end{equation}
and induces the projection dynamics \eqref{eq:PD} on $\intstrat$. Protocol \eqref{eq:ProjRP} was introduced by \cite{LS08}, who interpret it as a model of ``revision driven by insecurity'': agents playing rare strategies are particularly likely to consider revising, while candidate strategies are chosen without regard for their current levels of use.
\end{example}

While the revision protocols \eqref{eq:RGDF} are capable of generating many Riemannian dynamics \eqref{eq:RGD}, one can sometimes construct simpler protocols that take advantage of the structure of smaller classes of Riemannian dynamics.
For the microfoundations of the nested replicator dynamics \eqref{eq:NRD} and extensions thereof, we refer the reader to \cite{MSNRD}.

\section{General properties}
\label{sec:analysis}

In this section, we derive some general results for \eqref{eq:RGD}.
In \cref{sec:wp} we state a basic but technically challenging result on the existence and uniqueness of solutions.
In \cref{sec:basic} we show that the dynamics exhibit positive correlation with the game's payoffs, and we characterize the dynamics' rest points as either restricted equilibria or \aclp{NE}.
Finally, in \cref{sec:GCPG} we study the global behavior of the dynamics in potential games.

\subsection{Existence and uniqueness of solutions}
\label{sec:wp}

To illustrate the possibilities for existence and uniqueness of solutions, it is useful to start with a simple example.
Specifically, consider the $p$-replicator dynamics of \cref{ex:pReplicator} for a $2$-strategy game with action set $\pures = \{1,2\}$ and payoff functions $\payv_{1}(x) = 1$, $\payv_{2}(x) = 0$.

When $p=1$, we obtain the toy replicator equation
\begin{equation}
\dot x_{1}
	= x_{1}(1-x_{1}).
\end{equation}
Solutions to this equation exist and are unique for all $t \in (-\infty, \infty)$, and the support of $x(t)$ is invariant.
The pure states $0$ and $1$ are both rest points, and it is easy to check that the unique solution with initial condition $x_{1}(0) = a \in (0,1)$ is $x_{1}(t) = a / [a + (1 - a) e^{-t}]$.

When $p=0$, we obtain the Euclidean projection dynamics
\begin{equation}\label{eq:ToyEPD}
\dot x_{1}
	= \begin{cases}
	1/2
		&\quad
		\text{if $x_{1} < 1$}
		\\
	0
		&\quad
		\text{if $x_{1} = 1$}.
	\end{cases}
\end{equation}
For every initial condition $x_{1}(0) \in [0,1]$, this equation admits the unique forward solution $x_{1}(t) =x_{1}(0)+ t/2$ for $t \in [0, 2(1-x_{1}(0)))$ and $x_{1}(t)=1$ thereafter.
Evidently, the support of $x(t)$ is not invariant;
also, backward solutions are not defined for all time, and solutions are not smooth in $t$ when $x_{1}=1$ is reached.

Finally, when $p=1/2$, we obtain the differential equation
\begin{equation}
\label{eq:non-Lipschitz}
\dot x_{1}
	= \frac{\sqrt{x_{1} (1-x_{1})}}{\sqrt{x_{1}} + \sqrt{1-x_{1}}}.
\end{equation}
Although this equation admits forward (and backward) solutions from every initial condition, these are no longer unique.
Starting at $x_{1}(0) = 0$, we have
the stationary solution $x_{1}(t) = 0$ for $t \in[0, \infty)$;
furthermore, one can verify by a direct \textendash\ albeit tedious \textendash\ calculation that there is another solution, namely $x_{1}(t) = \frac{1}{2} + \frac{t-2}{4} \sqrt{1 + t - t^{2}/4}$ for $t \in [0, 4)$ and $x_{1}(t) = 1$ thereafter.
Additional solutions may linger at $x_{1} = 0$ before emulating the previous solution trajectory.

The differences in behavior in the three cases above can be traced back to the properties of the underlying Riemannian metrics.
First, the replicator dynamics are generated by the Shahshahani metric, which is minimal-rank extendable to all of $\strat$.
In this case the induced dynamics \eqref{eq:RGD} are Lipschitz continuous, so existence and uniqueness of solutions is guaranteed by the Picard\textendash Lindelöf theorem (along with an argument to account for $\strat$ being closed).
Moreover, the support of $x(t)$ is constant, and solutions exist in both forward and backward time \cite[Theorems 4.A.5 and 5.4.7]{San10}.

On the other hand, the Euclidean projection dynamics \eqref{eq:PD} are generated by a full-rank extendable metric.
In such cases, the induced dynamics \eqref{eq:RGD} are typically discontinuous, so the relevant solution notion is that of a \emph{Carathéodory solution}, an absolutely continuous trajectory that satisfies \eqref{eq:RGD} for almost all $t\geq0$.
In the case of \eqref{eq:PD}, \cite{LS08} showed that every initial condition admits a unique Carathéodory forward solution;
however, different solution orbits can merge in finite time, as illustrated in the previous example and in \cref{fig:portraits-Eucl}.

The following proposition shows that this behavior of \eqref{eq:RD} and \eqref{eq:PD} is representative of the minimal-rank and full-rank extendable cases respectively:

\begin{proposition}
\label{prop:wp}
Let $g$ be an extendable Riemannian metric.
\begin{enumerate}
[\textup(i\textup)]
\item
\label{itm:wp-min}
If $g$ is minimal-rank extendable, \eqref{eq:RGD} admits a unique global solution from every initial condition in $\strat$;
moreover, each solution has constant support.

\item
\label{itm:wp-full}
If $g$ is full-rank extendable, \eqref{eq:RGD} admits a unique forward Carathéodory solution from every initial condition in $\strat$.
\end{enumerate}
\end{proposition}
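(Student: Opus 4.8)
The plan is to treat the two cases with a common template: pin down a set on which the vector field $\dynfield(x)=\tproj_{x}(\payv^{\sharp}(x))$ of \eqref{eq:RGD} is Lipschitz, use the tangent-projection analysis of Section~\ref{sec:proj} to control its behaviour at $\bd(\strat)$, and then close the argument with Picard--Lindelöf in the minimal-rank case and with a Gronwall estimate built on a one-sided Lipschitz bound in the full-rank case.

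For part (i), I would first invoke Proposition~\ref{prop:extension}: minimal-rank extendability makes $g^{\sharp}_{\alpha\beta}(x)$ vanish as soon as $\alpha$ or $\beta$ leaves $\supp(x)$, so the coordinate formula \eqref{eq:RGD-coords2} holds verbatim at \emph{every} $x\in\strat$, not just on $\intstrat$. The point to check is that the bracketed matrix in \eqref{eq:RGD-coords2} is $C^{1}$ on all of $\strat$, which follows because $g^{\sharp}$ is $C^{1}$ on $\clorthant$ and $\insum_{\gamma}\normal_{\gamma}(x)=\insum_{\gamma,\kappa}g^{\sharp}_{\gamma\kappa}(x)>0$ for every $x\in\strat$ (the restriction of $g^{\sharp}(x)$ to $\supp(x)$ being positive definite). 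Since $\payv$ is Lipschitz, $\dynfield$ is then Lipschitz on the compact set $\strat$; and \eqref{eq:ProjDomBdMin} gives $\dynfield(x)\in\tspace_{\strat}(x)$, i.e.\ $\dynfield_{\alpha}(x)=0$ whenever $x_{\alpha}=0$, so $\dynfield$ is tangent to every face of $\strat$. Extending $\dynfield$ to a Lipschitz field on a neighbourhood of $\strat$ in $\aff(\strat)$, Picard--Lindelöf yields a unique maximal solution from each $x_{0}\in\strat$; Nagumo's theorem applied to $\pm\dynfield$ confines it to $\strat$ forward and backward, and compactness makes it global. Constancy of the support is a short uniqueness argument: the face spanned by $\supp(x_{0})$ is invariant, so $\supp(x(t))\subseteq\supp(x_{0})$, and were the solution to reach a proper subface, invariance of that subface together with uniqueness would force $\supp(x(0))$ to have been contained in it --- a contradiction.

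For part (ii), the obstacle is that $\dynfield$ is genuinely discontinuous on $\bd(\strat)$, where $\tcone_{\strat}(x)$ jumps. I would record that, by \eqref{eq:ProjDomBdMax}, $\metcone(x)=\tcone_{\strat}(x)$ and that on the relative interior of each face $\dynfield$ is Lipschitz --- there it is the $\product{\argdot}{\argdot}_{x}$-projection of the Lipschitz map $\payv^{\sharp}$ onto a \emph{fixed} closed convex cone, and such projections depend Lipschitz-continuously on the base point since $g\in C^{1}$ is uniformly positive-definite on the compact set $\strat$ --- so globally $\dynfield$ is bounded, piecewise Lipschitz over the finitely many faces, and valued in $\tcone_{\strat}(x)$. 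I would then split into uniqueness and existence. For \emph{uniqueness}, I would aim for a one-sided Lipschitz estimate: writing the Moreau decomposition of $\payv^{\sharp}(x)$ along $\tcone_{\strat}(x)$ relative to $\product{\argdot}{\argdot}_{x}$ as $\payv^{\sharp}(x)=\dynfield(x)+\nu(x)$ (so $\product{\nu(x)}{z}_{x}\le0$ for all $z\in\tcone_{\strat}(x)$, in particular for $z=x'-x$ and, with roles swapped, $\product{\nu(x')}{x-x'}_{x'}\le0$), the Lipschitz continuity of $\payv^{\sharp}=g^{\sharp}\payv$ together with these cone inequalities should give $\product{\dynfield(x)-\dynfield(x')}{x-x'}_{x}\le L\norm{x-x'}^{2}$. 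Applying this along two forward solutions to $\phi(t)=\product{x(t)-x'(t)}{x(t)-x'(t)}_{x(t)}$, and noting that the extra term produced by $\tfrac{d}{dt}\,g(x(t))$ is $O(\phi)$ since $g\in C^{1}$ and $\dynfield$ is bounded on $\strat$, yields $\dot\phi\le C\phi$ and hence $\phi\equiv0$ by Gronwall whenever $\phi(0)=0$. For \emph{existence}, I would work inside $\aff(\strat)$ and recast \eqref{eq:RGD} as the perturbed sweeping process $\dot x\in g^{\sharp}(x)\bigl(\payv(x)-\ncone_{\strat}(x)\bigr)$ with $x(0)\in\strat$, where $\ncone_{\strat}(x)$ is the polar of $\tcone_{\strat}(x)$; invoke the existence theory for such inclusions over a fixed compact convex polytope with Lipschitz data; and verify that the resulting absolutely continuous trajectory --- which at almost every instant sits in the relative interior of a face, where $\dynfield$ is single-valued --- is a Carathéodory solution of \eqref{eq:RGD}; $\strat$-invariance then makes it global in forward time.

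I expect the real work to sit in part (ii), on two fronts. First, the inner product is state-dependent, which is precisely what forces the $\tfrac{d}{dt}\,g(x(t))$ correction in the Gronwall step; this term is absent in the Euclidean case $g=\delta$, for which part (ii) reduces to the result of \cite{LS08}. Second, moving rigorously between the differential-inclusion formulation and the projected-ODE \eqref{eq:RGD} across the face structure of $\strat$ is delicate (a Filippov solution of the inclusion must be shown to be a bona fide Carathéodory solution of the original equation), and is exactly the point at which the technical result on discontinuous dynamics developed in Appendix~\ref{app:dynamics} is meant to be used.
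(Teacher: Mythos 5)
Your proposal is correct and follows essentially the same route as the paper: part (i) via Lipschitz continuity of the minimal-rank coordinate formula together with Picard--Lindelöf and face invariance, and part (ii) via differential-inclusion existence theory plus a metric-weighted one-sided Lipschitz bound obtained from Moreau's decomposition and the $C^{1}$ regularity of $g$, closed by a Gronwall-type estimate on $\norm{x-x'}_{x}^{2}$ \textendash\ exactly the content of Lemmas \ref{lem:1SL} and \ref{lem:Gronwall} in Appendix \ref{app:proofs}. The only slip is your closing attribution: the machinery of Appendix \ref{app:dynamics} (Proposition \ref{prop:omega}) is used for the Lyapunov/convergence analysis of Section \ref{sec:GCPG}, not for Proposition \ref{prop:wp}, whose existence step instead rests on the Aubin\textendash Cellina theory for discontinuous equations as summarized in \cite{LS08}.
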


\cref{prop:wp} justifies the terminology \emph{continuous} and \emph{discontinuous} that we introduced in \cref{sec:RieDDef} to refer to dynamics induced by minimal-rank and full-rank metrics.
The nontrivial part of \cref{prop:wp} is the proof of part \eqref{itm:wp-full}:
despite an apparent similarity, this result is considerably harder than the corresponding result of \cite{LS08} for \eqref{eq:PD}, so we relegate its proof to \cref{app:proofs}.
The main reason for this difficulty is that known 
uniqueness proofs for projected differential equations depend crucially on the Riemannian metric being constant throughout the dynamics' state space, an assumption that obviously fails here.

Of course, as can be seen from the continuous \textendash\ but not \emph{Lipschitz} continuous \textendash\ system \eqref{eq:non-Lipschitz}, \eqref{eq:RGD} may fail to admit unique solutions from initial conditions at the boundary of $\strat$ if the underlying metric does not admit a \emph{Lipschitz} continuous extension to the boundary of $\strat$.
To avoid the resulting complications, we do not consider dynamics that are continuous but not Lipschitz continuous in the rest of the paper.

\subsection{Basic properties}
\label{sec:basic}

We now establish some basic relationships between \eqref{eq:RGD} and the payoffs of the underlying game.
We first show that \eqref{eq:RGD} respects positive correlation:

\begin{proposition}
\label{prop:PC}
The dynamics \eqref{eq:RGD} satisfy \eqref{eq:PC}.
\end{proposition}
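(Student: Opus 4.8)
The plan is to show that $\braket{\payv(x)}{\dynfield(x)} \ge 0$ where $\dynfield(x) = \tproj_x(\payv^\sharp(x))$, with equality only when $\dynfield(x)=0$. The key observation is that the tangent projection $\tproj_x$ is, by definition \eqref{eq:tproj}, the closest-point projection (in the $g$-norm at $x$) of $\payv^\sharp(x)$ onto the closed convex cone $\metcone(x) = \tcone_\strat(x)\cap\domg(x)$. So I would first invoke the standard variational characterization of projection onto a closed convex cone $K$ in an inner product space: if $w^\ast = \Pi_K(w)$, then $\product{w - w^\ast}{w^\ast} = 0$ and $\product{w - w^\ast}{z}\le 0$ for all $z\in K$; in particular $\product{w}{w^\ast} = \product{w^\ast}{w^\ast} = \norm{w^\ast}^2 \ge 0$. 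Applying this with $w = \payv^\sharp(x)$, $K = \metcone(x)$, and $w^\ast = \dynfield(x)$ gives $\product{\payv^\sharp(x)}{\dynfield(x)}_x = \norm{\dynfield(x)}_x^2 \ge 0$.

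The second step is to translate the $g$-inner product back into the primal-dual pairing. By the defining property of the sharp operation \eqref{eq:sharp}, we have $\braket{\payv(x)}{w} = \product{\payv^\sharp(x)}{w}_x$ for all $w\in\domg(x)$; since $\dynfield(x)\in\metcone(x)\subseteq\domg(x)$, this applies with $w = \dynfield(x)$, yielding
\begin{equation*}
\braket{\payv(x)}{\dynfield(x)}
	= \product{\payv^\sharp(x)}{\dynfield(x)}_x
	= \norm{\dynfield(x)}_x^2
	\geq 0,
\end{equation*}
which is exactly \eqref{eq:PC-2}. Moreover, equality holds if and only if $\norm{\dynfield(x)}_x = 0$, i.e.\ $\dynfield(x) = 0$, which is precisely the strictness requirement in the definition of positive correlation. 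One should note that $\norm{\argdot}_x$ is a genuine norm on $\domg(x)$ by Proposition \ref{prop:extension} (the induced scalar product on $\domg(x)$ is positive-definite), so vanishing of the norm indeed forces the vector to be zero.

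I do not anticipate a serious obstacle here: the only mild subtlety is making sure the sharp identity \eqref{eq:sharp} and the projection-onto-cone argument are stated on the correct ambient space, namely $\domg(x)$ rather than all of $\R^\act$, since in the minimal-rank boundary case $\domg(x) = \R^{\supp(x)}$ is a proper subspace. But $\payv^\sharp(x)\in\im g^\sharp(x) = \domg(x)$ by construction, $\metcone(x)\subseteq\domg(x)$ by definition, and $\product{\argdot}{\argdot}_x$ is an honest inner product on $\domg(x)$ by Proposition \ref{prop:extension}, so all the pieces live in the same finite-dimensional inner product space and the classical projection theory applies verbatim. This also covers both the continuous and discontinuous regimes uniformly, since the argument never uses continuity of $x\mapsto\dynfield(x)$ — only the pointwise projection characterization.
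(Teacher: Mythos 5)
Your proof is correct and follows essentially the same route as the paper's: write $\braket{\payv(x)}{\dynfield(x)} = \product{\payv^{\sharp}(x)}{\tproj_{x}(\payv^{\sharp}(x))}_{x}$ via the sharp identity on $\domg(x)$ and then bound this by $\norm{\dynfield(x)}_{x}^{2}$ using the variational (Moreau-type) characterization of the metric projection, with strictness from positive-definiteness of $\product{\argdot}{\argdot}_{x}$ on $\domg(x)$. The only difference is cosmetic: the paper splits into three cases (interior, minimal-rank boundary, full-rank boundary), treating the first two as orthogonal projections onto subspaces and invoking Moreau's decomposition only in the third, whereas you handle all cases uniformly through the closed-convex-cone complementarity on $\metcone(x)$, which is a clean and valid consolidation.
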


\begin{proof}
Let $\dynfield(x) = \tproj_{x}(\payv^{\sharp}(x))$.
We then claim that
\begin{equation}
\label{eq:PC-derivation}
\braket{\payv(x)}{\dynfield(x)}
	= \product{\payv^{\sharp}(x)}{\dynfield(x)}_x
	\geq \product{\tproj_{x}(\payv^{\sharp}(x))}{\dynfield(x)}_x
	= \norm{\dynfield(x)}^{2}_x
	\geq0,
\end{equation}
with equality if and only if $\dynfield(x) = 0$.
The only step in \eqref{eq:PC-derivation} needing justification is the first inequality.
For this step, we split the analysis into three cases.
First, if $x \in \intstrat$, the inequality binds because $\tproj_{x}$ orthogonally projects $ \R^{\pures}$ onto $\zspace^{\pures} = \metcone(x) $, which contains $\dynfield(x)$.
Second, if $x \in \bd(\strat)$ and $g$ is minimal-rank extendable, then $\payv^{\sharp}(x) \in \R^{\supp(x)}$, so the inequality binds because $\tproj_{x}$ projects $\R^{\supp(x)}$ orthogonally onto $\zspace^{\pures} \cap \R^{\supp(x)} = \metcone(x) $, which contains $\dynfield(x)$.
Finally, if $x \in \bd(\strat)$ and $g$ is full-rank extendable, $\tproj_{x}$ is the closest point projection of $\R^{\pures}$ onto the tangent cone $\tcone_{\strat}(x)$.
Hence, by Moreau's decomposition theorem \citep{HUL01}, we infer that $\payv^{\sharp}(x) - \tproj_{x}(\payv^{\sharp}(x))$ lies in the normal cone 
\begin{equation}
\label{eq:ncone}
\ncone_{\strat}(x)
	= \setdef{w\in\R^{\pures}}{\product{w}{z}_{x}\leq 0 \text{ for all } z\in \tcone_{\strat}(x)}.
\end{equation} 
Since $\dynfield(x)\in\metcone(x)= \tcone_{\strat}(x)$, the first inequality in \eqref{eq:PC-derivation} is immediate.
\end{proof}

\cref{prop:PC} is not particularly surprising:
after all, the basic postulate behind \eqref{eq:RGD} is that the dynamics' vector of motion is the closest feasible approximation to the game's payoff field, with the notion of closeness determined by the underlying Riemannian metric
(or, equivalently, cost function).
As we show below, this alignment can be exploited further to characterize the dynamics' rest points.

To that end, recall that one of the main attributes of the Euclidean projection dynamics \eqref{eq:PD} is \emph{Nash stationarity}:
\begin{equation}
\label{eq:NS}
\tag{NS}
\text{$\eq\in\strat$ is a rest point if and only if it is a \acl{NE}.}
\end{equation}
This property does not hold under the replicator dynamics:
for instance, every pure state of $\strat$ is stationary under \eqref{eq:RD}.
In this case, \eqref{eq:NS} is replaced by the notion of \emph{restricted stationarity}:%
\footnote{Recall here that $x^\ast$ is a \emph{restricted equilibrium} if all strategies in its support earn equal payoffs.}
\begin{equation}
\label{eq:RS}
\tag{RS}
\text{$\eq\in\strat$ is a rest point if and only if it is a restricted equilibrium.}
\end{equation}

Our next result shows that this difference between the projection and the replicator dynamics is representative of the discontinuous and continuous cases, and highlights one advantage of the former over the latter:

\begin{proposition}
\label{prop:stationary}
\leavevmode
\begin{enumerate}
[\textup(\itshape i\textup)]
\item
\label{itm:stationary-min}
Continuous Riemannian dynamics satisfy \eqref{eq:RS}. 
\item
\label{itm:stationary-full}
Discontinuous Riemannian dynamics satisfy \eqref{eq:NS}. 
\end{enumerate}
\end{proposition}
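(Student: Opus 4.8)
The plan is to characterize the rest points of \eqref{eq:RGD} by locating $\payv^{\sharp}(x)$ relative to the $g$-admissible cone $\metcone(x)$, and then translating this back to the payoff covector $\payv(x)$ via the defining identity \eqref{eq:sharp} of the sharp operator. The starting point is the chain of (in)equalities in the proof of Proposition \ref{prop:PC}, which shows that $\dynfield(x) \equiv \tproj_{x}(\payv^{\sharp}(x))$ vanishes if and only if $\braket{\payv(x)}{\dynfield(x)} = \norm{\dynfield(x)}_{x}^{2} = 0$; equivalently, $x$ is a rest point precisely when $\payv^{\sharp}(x)$ lies in the appropriate polar object of $\metcone(x)$. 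Since all three cases of Section \ref{sec:proj} are already laid out there, the remaining work is bookkeeping.

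For part \eqref{itm:stationary-min} (the continuous, minimal-rank case), recall that $\metcone(x) = \tspace_{\strat}(x) = \setdef{z \in \R^{\supp(x)}}{\insum_{\alpha} z_{\alpha} = 0}$ is a \emph{linear subspace}, $\domg(x) = \R^{\supp(x)} \ni \payv^{\sharp}(x)$, and $\tproj_{x}$ is the $\product{\argdot}{\argdot}_{x}$-orthogonal projection onto $\metcone(x)$. Hence $\dynfield(x) = 0$ iff $\payv^{\sharp}(x)$ is $\product{\argdot}{\argdot}_{x}$-orthogonal to $\tspace_{\strat}(x)$. Applying \eqref{eq:sharp} (extended to $\domg(x)$ as in Proposition \ref{prop:extension}, exactly as in the proof of Proposition \ref{prop:PC}), $\product{\payv^{\sharp}(x)}{z}_{x} = \braket{\payv(x)}{z}$ for every $z \in \domg(x)$, so the rest-point condition reads $\insum_{\alpha} \payv_{\alpha}(x) z_{\alpha} = 0$ for all $z \in \R^{\supp(x)}$ with $\insum_{\alpha} z_{\alpha} = 0$. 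The orthogonal complement of these $z$ inside $\R^{\supp(x)}$ is spanned by the indicator of $\supp(x)$, so the condition holds iff $\payv_{\alpha}(x)$ is constant over $\alpha \in \supp(x)$, i.e. iff $x$ is a restricted equilibrium. This is exactly \eqref{eq:RS}.

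For part \eqref{itm:stationary-full} (the discontinuous, full-rank case), $\domg(x) = \R^{\act}$ and $\metcone(x) = \tcone_{\strat}(x)$, so $\tproj_{x}$ is the $\product{\argdot}{\argdot}_{x}$-closest-point projection onto the (possibly proper) cone $\tcone_{\strat}(x)$. By Moreau's decomposition theorem — invoked just as in the proof of Proposition \ref{prop:PC} — $\dynfield(x) = 0$ iff $\payv^{\sharp}(x) = \payv^{\sharp}(x) - \tproj_{x}(\payv^{\sharp}(x)) \in \ncone_{\strat}(x)$, i.e. iff $\product{\payv^{\sharp}(x)}{z}_{x} \leq 0$ for all $z \in \tcone_{\strat}(x)$; by \eqref{eq:sharp} this is $\braket{\payv(x)}{z} \leq 0$ for all $z \in \tcone_{\strat}(x)$. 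Now $\tcone_{\strat}(x) = \bigcup_{\lambda > 0} \lambda(\strat - x)$: every $y - x$ with $y \in \strat$ lies in $\tcone_{\strat}(x)$, and conversely $x + \lambda z \in \strat$ for small $\lambda > 0$ whenever $z \in \tcone_{\strat}(x)$. Thus the rest-point condition becomes $\braket{\payv(x)}{y - x} \leq 0$ for all $y \in \strat$, which by Example \ref{ex:Nash} (the variational form \eqref{eq:Nash-variational}) is precisely the statement that $x$ is a Nash equilibrium; at interior states this specializes to ``all payoffs equal''. This is \eqref{eq:NS}.

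The only delicate point, and the one I would be most careful with, is the domain bookkeeping in the minimal-rank boundary case: $\payv^{\sharp}(x)$ and the scalar product $\product{\argdot}{\argdot}_{x}$ live only on $\domg(x) = \R^{\supp(x)}$, not on the ambient $\R^{\act}$, so \eqref{eq:sharp} must be used only against $z \in \domg(x)$ and the orthogonal-complement computation must be performed inside $\R^{\supp(x)}$ — this is exactly what makes the boundary rest points \emph{restricted} rather than unrestricted equilibria, and is the sole substantive difference between the two parts. Everything else is immediate from Section \ref{sec:proj} and the proof of Proposition \ref{prop:PC}.
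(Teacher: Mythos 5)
Your proposal is correct and takes essentially the same route as the paper: part (ii) is exactly the paper's argument (the variational characterization \eqref{eq:Nash-variational} of Nash equilibria plus Moreau's decomposition), and part (i) characterizes rest points by the position of $\payv^{\sharp}(x)$ relative to the admissible set, which you express as orthogonality to $\tspace_{\strat}(x)$ within $\domg(x)$ followed by an annihilator computation, whereas the paper uses the coordinate formula \eqref{eq:RGD-coords} to show $\payv^{\sharp}(\eq)\propto\normal(\eq)=\onerow^{\sharp}$ and then inverts $g^{\sharp}$ \textendash\ an equivalent statement, since the orthogonal complement of the tangent space inside $\domg(x)$ is spanned by $\normal(x)$. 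Your careful handling of the minimal-rank boundary case (working entirely inside $\R^{\supp(x)}$) matches the paper's reduction via the support-invariance of the coordinate formula, so there is no gap.
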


\begin{proof}
For \eqref{itm:stationary-min}, recall that the coordinate expression \eqref{eq:RGD-coords} for \eqref{eq:RGD} always holds when $g$ is minimal-rank extendable, and $\dot x_{\pure} = 0$ whenever $x_{\pure} = 0$.
Therefore, it suffices to check that $\eq\in\intstrat$ is a rest point if and only if all the components of $\payv(\eq)$ are equal.
To that end, note that $\eq$ is a rest point of \eqref{eq:RGD-coords} if and only if 
\begin{equation}
\label{eq:pfofrs}
\payv^{\sharp}(\eq)
	= \frac{\insum_{\gamma} \payv^{\sharp}_{\gamma}(\eq)}{\insum_{\gamma} \normal_{\gamma}(\eq)} \normal(\eq)
	\propto \normal(\eq).
\end{equation}
In turn, this means that $\eq$ is a rest point of \eqref{eq:RGD} if and only if $\payv^{\sharp}(\eq) \propto \normal(\eq)$;
our claim then follows from the fact that $g^{\sharp}(\eq)$ is invertible.

For \eqref{itm:stationary-full}, assume that $g$ if full-rank extendable and fix some $\eq\in\strat$.
It is easy to show that $\eq$ is a Nash equilibrium if and only if it satisfies the variational characterization
\begin{equation}\label{eq:Nash-variational} 
0	\leq
	\braket{\payv(\eq)}{x - \eq} = \product{\payv^{\sharp}(\eq)}{x-\eq}_{\eq}
	\quad
	\text{for all $x\in\strat$},
\end{equation}
which says that $\payv^{\sharp}(\eq)$ lies in the normal cone $\ncone_{\strat}(\eq)$ of $\strat$ at $\eq$ (cf.~Eq.~\ref{eq:ncone} above).
Moreau's decomposition theorem then yields $\payv^{\sharp}(\eq) \in \ncone_{\strat}(\eq)$ if and only if $\tproj_{\eq}(\payv^{\sharp}(\eq)) = 0$, so our assertion follows.
\end{proof}

\begin{remark}
We note without proof
that shifting all strategies' payoffs by the same amount has no effect on \eqref{eq:RGD},
and rescaling all strategies' payoffs by the same factor only changes the speed at which solution paths are traversed. 
In addition, 
on the face of $\strat$ spanned by a subset $\pures'$ of $\pures$, continuous dynamics are invariant to changes in the payoffs of strategies outside of $\pures'$.
\end{remark}

\subsection{Global convergence in potential games}
\label{sec:GCPG}

Recall here that $\game\equiv\gamefull$ is a potential game if $\payv_{\pure}(x) = \pd_{\pure}\pot(x)$ for some potential function $\pot\from\strat\to\R$ (cf.~\cref{ex:potential}).
It then follows from \cref{prop:PC} that $f$ is a \emph{strict global Lyapunov function} for \eqref{eq:RGD}, meaning that its value increases along \eqref{eq:RGD} whenever the dynamics are not at rest.%
\footnote{Definitions concerning stability and convergence are collected in \cref{app:stability}.}

For continuous Riemannian dynamics, a standard Lyapunov argument implies that all $\omega$-limit points of \eqref{eq:RGD} are rest points \textendash\ and hence, by \cref{prop:stationary}, restricted equilibria of $\game$.
However, this argument does not extend to discontinuous dynamics and \aclp{NE} because it requires continuity of solutions with respect to initial conditions, a requirement which is difficult to prove in our case.
To circumvent this obstacle, we establish a \ac{lsc} bound on the rate of change of the game's potential function.
This bound then allows us to apply \cref{prop:omega} in \cref{app:stability}, which shows that, for dynamics on a compact set, such a bound on the rate of change of a Lyapunov function guarantees global convergence.

\begin{proposition}
\label{prop:potential}
Let $\game$ be a potential game with potential function $\pot$.
Then, $\pot$ is a strict Lyapunov function for \eqref{eq:RGD} and every $\omega$-limit point of \eqref{eq:RGD} is a rest point of \eqref{eq:RGD}.
These are restricted equilibria if \eqref{eq:RGD} is continuous, and \aclp{NE} if \eqref{eq:RGD} is discontinuous.
\end{proposition}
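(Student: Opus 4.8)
The plan is to combine the potential-game identity $\payv(x)=d\pot(x)$ (Example~\ref{ex:potential2}) with the positive-correlation estimate already obtained inside the proof of Proposition~\ref{prop:PC}. Along any (Carathéodory) solution $x(\argdot)$ of~\eqref{eq:RGD}, the composite $t\mapsto\pot(x(t))$ is absolutely continuous, and since $\dot x(t)=\dynfield(x(t))\in\tcone_{\strat}(x(t))\subseteq\zspace^{\act}$, the chain rule gives $\tfrac{d}{dt}\pot(x(t))=\braket{d\pot(x(t))}{\dynfield(x(t))}=\braket{\payv(x(t))}{\dynfield(x(t))}$ for a.e.\ $t$. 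By~\eqref{eq:PC} the right-hand side is $\geq 0$, with equality only when $\dynfield(x(t))=0$, so $\pot$ is a strict Lyapunov function for~\eqref{eq:RGD}. Moreover, by the Moreau-type argument in~\eqref{eq:PC-derivation} the residual $\payv^{\sharp}(x)-\tproj_{x}(\payv^{\sharp}(x))$ is orthogonal to $\dynfield(x)=\tproj_{x}(\payv^{\sharp}(x))$ with respect to $\product{\argdot}{\argdot}_{x}$, so in fact $\braket{\payv(x)}{\dynfield(x)}=\norm{\dynfield(x)}_{x}^{2}$ at every $x$.

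For continuous Riemannian dynamics I would close the argument with the standard Lyapunov/LaSalle reasoning. By Proposition~\ref{prop:wp}\eqref{itm:wp-min} solutions are unique in both time directions and depend continuously on initial data, so $\omega$-limit sets are nonempty, compact and invariant. As $\pot$ is continuous, nondecreasing along solutions and bounded on the compact set $\strat$, it is constant on each $\omega$-limit set; invariance then forces $\tfrac{d}{dt}\pot\equiv 0$ along any solution lying in it, hence $\dynfield\equiv 0$ there by the equality case above. Thus every $\omega$-limit point is a rest point, and it is a restricted equilibrium by Proposition~\ref{prop:stationary}\eqref{itm:stationary-min}.

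The substantive case is the discontinuous one, where solutions need not depend continuously on initial data and $\omega$-limit sets need not be invariant, so LaSalle's argument does not apply. Here I would invoke the criterion of Appendix~\ref{app:dynamics} (Proposition~\ref{prop:omega}): it suffices to produce a lower semicontinuous $\rate\from\strat\to[0,\infty)$ vanishing exactly on the rest points of~\eqref{eq:RGD} with $\tfrac{d}{dt}\pot(x(t))\geq\rate(x(t))$ along solutions. The natural choice is $\rate(x)=\norm{\dynfield(x)}_{x}^{2}=\norm{\tproj_{x}(\payv^{\sharp}(x))}_{x}^{2}$: the differential inequality holds with equality by the first paragraph, and $\rate(x)=0\iff\dynfield(x)=0\iff x$ is a rest point (which Proposition~\ref{prop:stationary}\eqref{itm:stationary-full} identifies as a Nash equilibrium). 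It remains to check that $\rate$ is l.s.c. I would use that $\metcone(x)=\tcone_{\strat}(x)$ is a closed convex cone, so $\norm{\tproj_{x}(\payv^{\sharp}(x))}_{x}^{2}=\norm{\payv^{\sharp}(x)}_{x}^{2}-\dist_{x}\!\big(\payv^{\sharp}(x),\tcone_{\strat}(x)\big)^{2}$; in the full-rank case $g^{\sharp}$, hence $\payv^{\sharp}$ and the metric $\norm{\argdot}_{x}$, vary continuously over all of $\strat$, and $\tcone_{\strat}(x)\subseteq\tcone_{\strat}(x')$ for all $x'$ near $x$ (coordinates positive at $x$ stay positive, so active constraints can only drop), whence $x\mapsto\dist_{x}(\payv^{\sharp}(x),\tcone_{\strat}(x))$ is upper semicontinuous and $\rate$ is l.s.c.\ as a continuous function minus an u.s.c.\ one. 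Proposition~\ref{prop:omega} then gives that every $\omega$-limit point of~\eqref{eq:RGD} is a rest point, completing the proof.

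I expect the lower-semicontinuity verification for $\rate$ to be the main obstacle: the tangent cone $\tcone_{\strat}(x)$, and with it the projection $\tproj_{x}$ and the field $\dynfield$, jumps at $\bd(\strat)$, so one must check that the jumps go in the direction that keeps the \emph{subtracted} distance term merely upper semicontinuous rather than lower semicontinuous. The inner-semicontinuity of $x\mapsto\tcone_{\strat}(x)$ together with the continuity of the metric in the full-rank regime is precisely what makes this work; without either of them $\rate$ could fail to be l.s.c.\ and Proposition~\ref{prop:omega} would no longer apply.
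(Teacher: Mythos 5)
Your proposal is correct and takes essentially the same route as the paper: the positive-correlation identity makes $\pot$ a strict Lyapunov function, the continuous case is closed by the standard LaSalle-type argument, and the discontinuous case is handled by Proposition~\ref{prop:omega} with rate $\norm{\dynfield(x)}_{x}^{2}$. The only (minor) divergence is in verifying lower semicontinuity of the rate: you use the Moreau/Pythagoras identity $\norm{\tproj_{x}(\payv^{\sharp}(x))}_{x}^{2}=\norm{\payv^{\sharp}(x)}_{x}^{2}-\dist_{x}(\payv^{\sharp}(x),\tcone_{\strat}(x))^{2}$ together with upper semicontinuity of the distance term, whereas the paper's Lemma~\ref{lem:RLSC} uses the max-representation of the speed and a precursor of the maximum theorem \textendash\ both arguments rest on the same inner semicontinuity of $x\mapsto\tcone_{\strat}(x)$.
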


\begin{proof}
Let $\dynfield(x) = \tproj_{x}(\payv^{\sharp}(x))$ and let $x(t)$ be a solution of \eqref{eq:RGD}.
Then, \cref{prop:PC} yields
\begin{equation}
\label{eq:Lyapunov-pot}
\frac{d}{dt}\pot(x(t))
	= \braket{D\pot(x(t))}{\dot x(t)}
	= \braket{\payv(x(t))}{\dynfield( x(t))}
	\geq 0,
\end{equation}
with equality if and only if $\dynfield(x(t)) = 0$.
Hence, $\pot$ is a strict global Lyapunov function for \eqref{eq:RGD}.

When \eqref{eq:RGD} is (Lipschitz) continuous, a standard argument shows that every $\omega$-limit point of \eqref{eq:RGD} is a rest point thereof \cite[see e.g.][Theorem 7.B.3]{San10}.
The discontinuous case however requires a different treatment.
To start, note that
\begin{equation}
\frac{d}{dt}\pot(x(t))
	= \braket{\payv(x(t))}{\dynfield( x(t))}
	= \product{\payv^{\sharp}(x)}{\tproj_{x}(\payv^{\sharp}(x))}_x
	\geq \norm{\dynfield( x(t))}^2_x
	\geq 0,
\end{equation}
where the first inequality follows from Moreau's decomposition theorem.
Both inequalities bind if and only if $\dynfield(x(t)) = 0$;
since the speed function $x \mapsto \norm{\dynfield(x)}_x$ is \acl{lsc} (cf. \cref{lem:RLSC}),
\cref{prop:omega} shows that every $\omega$-limit point of \eqref{eq:RGD} is a rest point.
\end{proof}

The classic analyses of \cite{Kim58} and \cite{Sha79} showed that in common interest games, average payoffs are increased at a maximal rate under the replicator dynamics, provided that ``maximal'' is defined with respect to the Shahshahani metric.
We conclude this section by deriving an analogous principle for all Riemannian game dynamics.
To state it, define the \emph{gradient} of a smooth function $f\from\orthant \to \R$ with respect to $g$ by
\begin{equation}
\label{eq:gradient}
\grad f (x)
	= (Df(x) \,g^{-1}(x))^\ttop,
\end{equation}
that is, as the (necessarily unique) vector satisfying
\begin{equation}
\braket{Df(x)}{z}
	= \product{\grad f(x)}{z}_x 
	\quad
	\text{for all $z\in \R^{\pures}$, $x\in\orthant$}.
\end{equation}
Geometrically, the vector $\grad f(x)$ represents the direction of maximal increase of the function $f$ at $x$ with respect to the metric $g$.%
\footnote{Specifically, this means that $\grad f(x) = \argmax\setdef{D_{z}f(x)}{\norm{z}_{x} = 1}$;
that this is so follows from the definition of $\grad f(x)$ and the Cauchy-Schwarz inequality.}
We then have:

\begin{proposition}
Let $\game$ be a potential game with potential function $\pot$ and let $g$ be an extendable Riemannian metric.
Then, for all $x\in\intstrat$, the vector field that defines \eqref{eq:RGD} is the projection of $\grad\pot$ onto $\tspace_{\strat}(x)$ with respect to $g$.
\end{proposition}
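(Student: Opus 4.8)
The plan is to show that the claimed identity is a direct, essentially bookkeeping, consequence of three facts already established in the excerpt, chained together. First, by Example~\ref{ex:potential2}, a game is a potential game with potential $\pot$ precisely when $\payv(x) = d\pot(x)$ as covectors; for interior $x$ (where $\pot$ is taken to be defined on a neighbourhood of $\strat$, so that $d\pot(x)$ acts on all of $\R^{\act}$) this is an honest equality in the dual space. Second, applying the sharp operator $g^{\sharp}(x)$ and invoking the definition~\eqref{eq:gradient} of the Riemannian gradient, $\grad\pot(x) = (d\pot(x))^{\sharp}$, one gets $\payv^{\sharp}(x) = \grad\pot(x)$ for every $x\in\intstrat$. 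Third, by the interior-point discussion of Section~\ref{sec:proj}, for $x\in\intstrat$ one has $\domg(x) = \R^{\act}$ and $\metcone(x) = \zspace^{\act} = \tspace_{\strat}(x)$, so that $\tproj_{x}$ is precisely the $g$-orthogonal projection of $\R^{\act}$ onto $\tspace_{\strat}(x)$. Putting the three together, the vector field defining \eqref{eq:RGD} at $x$ is $\tproj_{x}(\payv^{\sharp}(x)) = \tproj_{x}(\grad\pot(x))$, which is exactly the $g$-orthogonal projection of $\grad\pot(x)$ onto $\tspace_{\strat}(x)$.

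For completeness, I would also record the resulting explicit formula obtained from \eqref{eq:tproj-orth} with the $g$-normal vector $\normal(x) = \onerow^{\sharp}$: the projection of $\grad\pot(x)$ onto $\tspace_{\strat}(x)$ equals $\grad\pot(x) - \bigl(\product{\normal(x)}{\grad\pot(x)}_{x}/\norm{\normal(x)}_{x}^{2}\bigr)\,\normal(x)$, which upon substituting $\grad\pot(x) = \payv^{\sharp}(x)$ is the right-hand side of \eqref{eq:RGD-coords1}. This makes transparent the ``steepest-ascent'' reading of the dynamics and the link to the Kimura--Shahshahani principle recalled in Section~\ref{sec:GCPG}.

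I do not expect any genuine obstacle: the statement is pure manipulation of the definitions of $\sharp$, of $\grad$, and of $\tproj_{x}$, together with the covector characterization of potential games. The only point that merits a line of care is the domain on which $\payv(x) = d\pot(x)$ is asserted: if one works with a potential defined only on $\strat$ (cf.\ Footnote~\ref{fn:PG}), the equality of covectors holds only against tangent vectors $z\in\zspace^{\act}$, but then $\grad\pot(x)$ and $\payv^{\sharp}(x)$ are themselves only characterized through such pairings, and $\tproj_{x}$ at interior $x$ already maps into $\zspace^{\act}$, so the argument goes through unchanged. It is also worth noting explicitly that, since $\tspace_{\strat}(x) = \zspace^{\act}$ is a genuine linear subspace at interior points (not merely a proper cone), the ``projection'' in the statement is an ordinary orthogonal projection, which is why the restriction to $x\in\intstrat$ is imposed and no convex program enters.
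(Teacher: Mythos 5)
Your proof is correct and follows exactly the paper's route: identify $\payv(x)=d\pot(x)$, apply the sharp operator so that $\payv^{\sharp}(x)=\grad\pot(x)$, and note that at interior $x$ the projection $\tproj_{x}$ is the $g$-orthogonal projection onto $\tspace_{\strat}(x)=\zspace^{\act}$, giving $\tproj_{x}(\payv^{\sharp}(x))=\tproj_{x}(\grad\pot(x))$. The paper's proof is precisely this one-line observation; your additional remarks on the domain of $\pot$ and the explicit coordinate formula are consistent extras, not a different argument.
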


\begin{proof}
Since $\payv(x) = Df(x)$, we have $\tproj_{x}(\payv^{\sharp}(x)) = \tproj_{x}(\grad f(x))$, as claimed.
\end{proof}

Hence, at interior states, the dynamics \eqref{eq:RGD} increase the value of potential at a maximal rate under the geometry defined by $g$, subject to feasibility.
For discontinuous dynamics, this conclusion remains true even at boundary states.
For continuous dynamics, the interior of each face of $\strat$ is invariant under \eqref{eq:RGD}, so this conclusion holds provided that feasibility is understood to incorporate this additional constraint.

\section{Hessian game dynamics}
\label{sec:HD}

By virtue of the integrability property that defines them, potential games have desirable convergence properties under a wide range of evolutionary dynamics.
By contrast, convergence results for other classes of games \textendash\ for instance, contractive games and games with an \ac{ESS} \textendash\ require additional structure, often taking the form of integrability properties built into the dynamics themselves.%
\footnote{See \cite{HS07}, \cite{San10c}, and \cite{Zus18}.}

In this section, we show that the integrability of \acl{HR} metrics allows us to generalize several properties of the replicator dynamics and the Euclidean projection dynamics to a substantially broader class of dynamics.  These \emph{Hessian game dynamics},  introduced in \cref{ex:HR}, take the form
\begin{equation}
\label{eq:HD}
\tag{HD}
\dot x
	= \argmax_{z \in \metcone(x)} \bracks[\Big]{\braket{\payv(x)}{z} - \tfrac12\norm{z}^2_{x}},
	\quad
g
	= \hess h,
\end{equation}
where the continuous function $h\from\clorthant\to\R$ is $C^{3}$-smooth on every positive suborthant of $\clorthant$, and where $\hess h(x)$ is positive definite for all $x\in\orthant$.
As we demonstrate below, the integrability built into the dynamics  \eqref{eq:HD} is the source of a variety of stability and convergence results.

A key element of our analysis is the so-called \emph{Bregman divergence}, which we introduce in \cref{sec:Bregman}.
In \cref{sec:contractive}, we establish global convergence to equilibrium in contractive games and local stability of \acp{ESS}, while \cref{sec:averages} demonstrates the convergence of time-averages of interior trajectories to \acl{NE} and provides sufficient conditions for permanence.
Finally, \cref{sec:dominated} establishes the elimination of strictly dominated strategies under continuous Hessian dynamics.

\subsection{Bregman divergences}
\label{sec:Bregman}

When used as a tool for establishing convergence, Lyapunov functions typically measure some sort of ``distance'' between the current state and a target state $\base$.
For Hessian dynamics, a natural point of departure is the potential function $h$ of the metric $g=\hess h$ that defines them.
However, since the (game-specific) target state $\base$ is independent of $g$, there is no reason that $h$ itself should serve as a Lyapunov function.
Instead, taking advantage of the convexity of $h$, we consider the difference between $h(\base)$ and the best linear approximation of $h(\base)$ from $\notbase$.

Formally, the \emph{Bregman divergence} of $h$ \citep{Bre67} is defined as
\begin{equation}
\label{eq:Bregman}
\breg(\base,\notbase)
	= h(\base) - h(\notbase ) - h'(\notbase ;\base - \notbase),
	\qquad
	\base, \notbase\in \strat,
\end{equation}
where $h'(\notbase ;\base - \notbase )$ is the one-sided derivative of $h$ at $\notbase $ along $\base - \notbase $, i.e.
\begin{equation}
h'(\notbase ;\base - \notbase )
	= \lim_{t\to0^{+}} t^{-1} \left[ h(\notbase + t (\base - \notbase ) ) - h(\notbase ) \right].
\end{equation}
Since $h$ is convex, we have 
\begin{equation}\label{eq:BPD}
\breg(\base,\notbase) \geq 0,\:\text{ with equality if and only if }\base = \notbase.
\end{equation} 
On the other hand, $\breg$ is not symmetric in $\base$ and $\notbase$, so it is not a bona fide distance function on $\strat$;
rather, $\breg(\base,\notbase )$ describes the remoteness of $\notbase$ from the base point $\base$, hence the name ``divergence''.

Revisiting our two archetypal examples, the Euclidean metric is generated by the quadratic potential $h(x) = \tfrac{1}{2} \sum_{\pure} x_{\pure}^{2}$.
\begin{subequations}
Definition \eqref{eq:Bregman} then yields the \emph{Euclidean divergence}
\label{eq:Bregs}
\begin{equation}
\label{eq:Breg-eucl}
\deucl(\base,\notbase )
	= \frac{1}{2}\insum_{\pure} (\notbase_{\pure} - \base_{\pure})^{2},
\end{equation}
which is (uncharacteristically) symmetric in $\base$ and $\notbase$.
Analogously, the Shahshahani metric is generated by the (negative) entropy $h(x) = \insum_{\pure} x_{\pure} \log x_{\pure}$.
A short calculation shows that the corresponding divergence function is the \acdef{KL} divergence
\begin{equation}
\label{eq:KL}
\dkl(\base,\notbase )
	= \insum_{\pure:\,\base_{\pure}>0} \base_{\pure} \log( \base_{\pure}/\notbase _{\pure}),
\end{equation}
which has been used extensively in the analysis of the replicator dynamics \citep{Wei95,HS98}.
\end{subequations}

The key qualitative difference between the Euclidean divergence \eqref{eq:Breg-eucl} and the \ac{KL} divergence \eqref{eq:KL} is that the former is finite for all $\notbase,\base\in\strat$, whereas the latter blows up to $+\infty$ when $\supp(\base) \nsubseteq \supp(\notbase)$.
The reason for this blow-up is that the entropy function $h(x) = \sum_{\pure} x_{\pure} \log x_{\pure}$ becomes infinitely steep as any boundary point $\notbase$ of $\strat$ is approached from the interior of $\strat$, i.e.
\begin{equation}
\label{eq:steep}
\txs
\sup_{\pure\in\pures} \abs{\pd_{\pure} h(\notbase_{n})}
	\to \infty
	\quad
	\text{for every interior sequence $\notbase_{n}$ converging to $\notbase$.}
\end{equation}
When this is the case for all $\notbase\in\bd(\strat)$, we say that $h$ is \emph{steep} \citep{HS02,ABB04}.
At the opposite end of the spectrum, if $Dh(x)$ exists for all $x\in\strat$, we say that $h$ is \emph{nonsteep}.

The link between the steepness of $h$ and the finiteness of the associated Bregman divergence is provided by the following lemma:

\begin{lemma}
\label{lem:Bregman}
Fix $\base\in\strat$ and let $\good(\base)$ denote the union of the relative interiors of the faces of $\strat$ that contain $\base$, i.e.
\begin{equation}
\label{eq:domain}
\good(\base)
	\equiv \setdef{\notbase\in\strat}{\supp(\base) \subseteq \supp(\notbase)}.
\end{equation}
If $h$ is steep, we have $\breg(\base,\notbase)<\infty$ for all $x\in\good(\base)$;
by contrast, if $h$ is nonsteep, we have $\breg(\base,\notbase) < \infty$ for all $x\in\strat$.
\end{lemma}

\begin{proof}
If $h$ is steep and $\notbase \in \good(\base)$, the smoothness of $h$ on the face of $\strat$ spanned by $\supp(x)\supseteq\supp(\eq)$ implies that the directional derivative $h'(\notbase;\base-\notbase)$ exists and is finite, so $\breg(\base,\notbase)$ is itself finite.
If instead $h$ is nonsteep, $h'(\notbase;\base-\notbase)$ exists and is finite for all $\notbase\in\strat$, so again $\breg(\base,\notbase) < \infty$.
\end{proof}

Beyond the positive definiteness property \eqref{eq:BPD}, the attribute of the Bregman divergence that recommends it as a Lyapunov function for \eqref{eq:HD} is that the level sets of $\breg(\base,\argdot)$ are perpendicular to all rays emanating from $\base$ under $g = \hess h.$
Formally, we have:

\begin{lemma}
\label{lem:Bregman-grad}
Let $g = \hess h $ be an extendable \ac{HR} metric and let $\base\in\strat$.
Then, for every smooth curve $\notbase(t)$ with constant support containing that of $\base$, we have:
\begin{equation}
\label{eq:Bregman-grad}
\frac{d}{dt} \breg(\base,\notbase(t))
	= \product{\dot \notbase(t)}{\notbase(t) - \base}_{\notbase(t)}.
\end{equation}
In particular, if $\breg(\base,x(t))$ is constant, $\dot x(t)$ is perpendicular to $x(t) - \base$.
Finally, if $h$ is nonsteep, the above conclusions hold for every smooth curve $\notbase(t)$ on $\strat$.
\end{lemma}

\begin{proof}
The proof is a direct application of the chain rule:
\begin{flalign}
\frac{d}{dt} \breg(\base, x)
	&= - \insum_{\pure}\left[
	\frac{\pd h}{\pd x_{\pure}} \dot \notbase_{\pure}
	+ \frac{\pd h}{\pd x_{\pure}} \frac{d}{dt}(\base_{\pure} - \notbase_{\pure})
	+ \insum_{\purealt} \frac{\pd^{2}h}{\pd x_{\pure} \pd x_{\purealt}} (\base_{\pure} - \notbase_{\pure}) \dot \notbase_{\purealt}
	\right]
	\notag\\
	&= \insum_{\pure,\purealt} (\base_{\pure} - x_{\pure}) g_{\pure\purealt}(x) \dot x_{\purealt}
	= \product{\dot x}{x-\base}_{\notbase},
\end{flalign}
where all summations are taken over the (constant) support $\pures'\equiv\supp(\notbase(t))$ of $\notbase(t)$ and we used the fact that $\dot \notbase_{\pure} = 0$ for $\pure\notin\pures'$.
Finally, in the nonsteep case, $h$ is smooth throughout $\strat$, so the above holds for every smooth curve $x(t)$.
\end{proof}


Within the class of \acl{HR} metrics, steepness of $h$ roughly corresponds to minimal-rank extendability of the metric $g =\hess h$, and nonsteepness to full-rank extendability.
These analogies fail when the steepness of $h$ does not adequately control the regularity of $g$ near the boundary of $\strat$, or when $g$ is minimal-rank extendable but generates non-Lipschitz dynamics.
Bearing this in mind, we use the term \emph{continuous Hessian dynamics} for Riemannian dynamics generated by a minimal-rank extendable metric $g = \hess h $ with steep $h$, and the term \emph{discontinuous Hessian dynamics} for Riemannian dynamics generated by a full-rank extendable metric $g = \hess h$ with nonsteep $h$.
In what follows, we will tacitly assume that the dynamics \eqref{eq:HD} are either continuous or discontinuous.

\subsection{Contractive games and \aclp{ESS}}
\label{sec:contractive}

Recall that a population game $\game\equiv\gamefull$ is called
\emph{contractive} if $\braket{\payv(x')-\payv(x)}{x' - x} \leq 0$ for all $x,x'\in\strat$,
\emph{strictly contractive} if the inequality is strict whenever $x \neq x'$,
and \emph{conservative} if the inequality always binds (cf. \cref{ex:contract}).
As is well known, the set of \aclp{NE} of any contractive game is convex, and every strictly contractive game admits a unique \acl{NE} \citep{HS09}.

Combining the defining inequality of strictly contractive games with the variational characterization of \aclp{NE} \eqref{eq:Nash-variational},
it follows that the (necessarily unique) \acl{NE} of a strictly contractive game satisfies the inequality
\begin{equation}
\label{eq:GESS}
\braket{\payv(x)}{x -\eq }
	\leq 0
	\quad
	\text{with equality only if $x=\eq$}.
\end{equation}
\cite{HS09} call a state satisfying \eqref{eq:GESS} a \acdef{GESS}.
This is the global version of the seminal local solution concept of \cite{MSP73}: if \eqref{eq:GESS} holds for all $x \neq \eq$ in a neighborhood of $\eq$, then $\eq$ is called an \acdef{ESS}.%
\footnote{This concise characterization of evolutionary stability is due to \cite{HSS79}.}

It is well known that the \ac{GESS} $\eq$ of a strictly contractive game
attracts all solutions of the replicator dynamics  whose initial support contains that of $\eq$;
by comparison, $\eq$ attracts \emph{all} orbits of the Euclidean projection dynamics \eqref{eq:PD}.
\cref{thm:contractive} extends these results to all Hessian dynamics \eqref{eq:HD}.

\begin{theorem}
\label{thm:contractive}
Let $\game$ be the \textup(necessarily unique\textup) \acl{NE} of a strictly contractive game $\game$.
Then:
\begin{enumerate}
[\textup(i\textup)]

\item
For all continuous Hessian dynamics, $\breg(\eq,\argdot)$ is a strict decreasing Lyapunov function on $\good(\eq)$, and $\eq$ is asymptotically stable with basin $\good(\eq)$.

\item
For all discontinuous Hessian dynamics, $\breg(\eq,\argdot)$ is a strict decreasing global Lyapunov function, and $\eq$ is globally asymptotically stable.
\end{enumerate}
\end{theorem}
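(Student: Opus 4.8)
The plan is to use the Bregman divergence $\breg(\eq,\argdot)$ as a Lyapunov function, differentiating it along solutions of \eqref{eq:HD} and showing strict decrease off $\eq$ via the strict contractivity of the game. The computation rests on Proposition \ref{prop:Bregman-grad}, which expresses $\frac{d}{dt}\breg(\eq,x(t))$ as a Riemannian inner product, together with the sharp-projection structure of \eqref{eq:HD}.

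\textbf{Step 1: differentiate the divergence.} Fix an initial condition $x(0)$ and let $x(t)$ be the corresponding solution of \eqref{eq:HD}. In the continuous case, $x(t)$ has constant support containing $\supp(\eq)$ (Proposition \ref{prop:wp}\eqref{itm:wp-min}), so $x(t) \in \good(\eq)$ for all $t$ by the invariance of the faces of $\strat$; in the discontinuous case, the steepness assumption fails but $h$ is smooth on all of $\strat$, so Proposition \ref{prop:Bregman-grad} applies directly to every (Carathéodory) solution. Either way,
\begin{equation}
\frac{d}{dt}\breg(\eq,x(t))
  = \product{\dot x(t)}{x(t)-\eq}_{x(t)}.
\end{equation}

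\textbf{Step 2: bound the inner product by payoffs.} Write $\dynfield(x) = \tproj_x(\payv^\sharp(x))$, so $\dot x = \dynfield(x)$. Since $x-\eq$ is tangent to $\strat$ and (in the continuous case) lies in $\domg(x)=\R^{\supp(x)}$, it is a $g$-admissible vector, i.e. $x-\eq \in \metcone(x)$. By Moreau's decomposition theorem, $\payv^\sharp(x)-\dynfield(x)$ lies in the normal cone $\ncone_\strat(x)$, so $\product{\payv^\sharp(x)-\dynfield(x)}{z}_x \leq 0$ for all $z\in\tcone_\strat(x)$; moreover $-(x-\eq)$ need not be admissible, so instead we argue: $x-\eq \in \tcone_\strat(x)$ gives $\product{\payv^\sharp(x)-\dynfield(x)}{\eq-x}_x \geq 0$, hence
\begin{equation}
\product{\dynfield(x)}{x-\eq}_x
  \leq \product{\payv^\sharp(x)}{x-\eq}_x
  = \braket{\payv(x)}{x-\eq},
\end{equation}
using the defining property \eqref{eq:sharp} of $\payv^\sharp$. (In the interior and minimal-rank boundary cases $\tproj_x$ is a linear orthogonal projection onto the tangent \emph{space} containing $x-\eq$, so this step is exact; the care is only needed in the full-rank case where the cone has active constraints. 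This is the step I expect to require the most attention — verifying that the sign works out because $x-\eq$ points from the current state toward $\eq$, not away.)

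\textbf{Step 3: invoke strict contractivity.} By \eqref{eq:GESS}, $\braket{\payv(x)}{x-\eq} \leq 0$ with equality iff $x=\eq$. Combining with Step 2,
\begin{equation}
\frac{d}{dt}\breg(\eq,x(t)) \leq \braket{\payv(x(t))}{x(t)-\eq} \leq 0,
\end{equation}
so $\breg(\eq,\argdot)$ is nonincreasing along solutions. For the strict-decrease claim: if the derivative vanishes at $x$, then $\braket{\payv(x)}{x-\eq}=0$, forcing $x=\eq$ by \eqref{eq:GESS}; thus $\breg(\eq,\argdot)$ is a \emph{strict} Lyapunov function. Since $\breg(\eq,\eq)=0$ and $\breg(\eq,\cdot)>0$ elsewhere (property \eqref{eq:BPD}), and $\breg(\eq,\cdot)$ is finite on $\good(\eq)$ when $h$ is steep (Proposition \ref{prop:Bregman}, continuous case) or on all of $\strat$ when $h$ is nonsteep (discontinuous case), standard Lyapunov theory for the continuous regime — and Proposition \ref{prop:omega} together with lower semicontinuity of the speed function (Lemma \ref{lem:RLSC}) for the discontinuous regime — yields convergence of every solution to the unique rest point $\eq$. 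Since sublevel sets of $\breg(\eq,\cdot)$ are forward-invariant and shrink to $\{\eq\}$, we get Lyapunov stability and hence asymptotic stability, with basin $\good(\eq)$ in case (i) and all of $\strat$ in case (ii). Finally, that $\eq$ is indeed the unique Nash equilibrium and satisfies \eqref{eq:GESS} was recalled just before the theorem statement, completing the argument.
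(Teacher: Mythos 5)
Your overall strategy is the paper's: differentiate $\breg(\eq,\argdot)$ via Proposition \ref{prop:Bregman-grad}, compare $\product{\dynfield(x)}{x-\eq}_{x}$ with $\braket{\payv(x)}{x-\eq}$ (equality in the minimal-rank case, an inequality via Moreau's theorem in the full-rank case), and conclude with \eqref{eq:GESS}. But two points need repair. First, a local one: in Step 2 you assert $x-\eq\in\tcone_{\strat}(x)$, which is false in general (if $x_{\alpha}=0<\eq_{\alpha}$ then $(x-\eq)_{\alpha}<0$), and the inequality you then write, $\product{\payv^{\sharp}(x)-\dynfield(x)}{\eq-x}_{x}\geq 0$, actually implies $\product{\dynfield(x)}{x-\eq}_{x}\geq\product{\payv^{\sharp}(x)}{x-\eq}_{x}$, i.e.\ the reverse of your displayed inequality. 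The correct argument is the paper's: $\eq-x\in\tcone_{\strat}(x)$ (since $\eq\in\strat$ and $\strat$ is convex), so Moreau gives $\product{\payv^{\sharp}(x)-\dynfield(x)}{\eq-x}_{x}\leq 0$, which is exactly $\product{\dynfield(x)}{x-\eq}_{x}\leq\product{\payv^{\sharp}(x)}{x-\eq}_{x}$. The final display is right; the justification as written is not.

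Second, and more substantively, the continuous case of Step 3 has a genuine gap. Under continuous Hessian dynamics $\eq$ is \emph{not} the unique rest point (rest points are all restricted equilibria, e.g.\ every vertex), and $\breg(\eq,\argdot)$ is finite only on $\good(\eq)$, which is neither closed nor compact. So ``standard Lyapunov theory'' does not by itself exclude the possibility that a solution starting in $\good(\eq)$ has $\omega$-limit points in $\strat\setminus\good(\eq)$ \textendash\ for instance at some other restricted equilibrium on the boundary \textendash\ and without ruling this out you cannot conclude convergence to $\eq$ with basin $\good(\eq)$. The paper closes this by a separate argument: if $\olimit\neq\eq$ were an $\omega$-limit point, then by \eqref{eq:GESS} and continuity $\braket{\payv(x)}{x-\eq}\leq -a<0$ on a neighborhood of $\olimit$, and since the speed of motion is bounded the trajectory spends at least some fixed time $\delta>0$ near $\olimit$ on infinitely many visits; integrating \eqref{eq:Breg-calc} then forces $\breg(\eq,x(t))\to-\infty$, contradicting $\breg\geq 0$. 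Your proposal contains no substitute for this step. (Your discontinuous case, by contrast, matches the paper: Moreau's inequality plus Proposition \ref{prop:omega}, with the lsc requirement met by the continuous function $x\mapsto\braket{\payv(x)}{\eq-x}$; Lemma \ref{lem:RLSC} is not actually needed there.)
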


\begin{proof}
We begin with the continuous case.
By \cref{prop:wp}\eqref{itm:wp-min}, every solution $x(t)$ of \eqref{eq:HD} has constant support.
Hence, if $x(0) \in \good(\eq)$, \cref{lem:Bregman-grad} yields:
\begin{subequations}
\label{eq:Breg-calc}
\begin{flalign}
\frac{d}{dt} \breg(\eq,x)
	&
	= \product{\dot x}{x - \eq}_{x}
	= \product{\tproj_{x}(\payv^{\sharp}(x))}{x - \eq}_{x}
	\notag\\
	&\label{eq:Breg-calc2}
	= \product{\payv^{\sharp}(x)}{x - \eq}_{x}
	= \braket{\payv(x)}{x - \eq},
	\\
	&\label{eq:Breg-calc3}
	\leq 0,
\end{flalign}
\end{subequations}
where we used the definition of $\tproj_{x}$ for minimal-rank extendable metrics (cf.~\cref{sec:geometry}) to obtain \eqref{eq:Breg-calc2} and the definition \eqref{eq:GESS} of a \ac{GESS} for \eqref{eq:Breg-calc3}.
Since equality in \eqref{eq:Breg-calc} holds if and only if $x = \eq$, we conclude that $\breg(\eq,x)$ is a strict Lyapunov function on $\good(\eq)$.
If we can show in addition that $x(t)$ has no $\omega$-limit points in $\strat\setminus\good(\eq)$, then asymptotic stability with basin $\good(\eq)$ follows from standard arguments (see e.g.~\cite{San10}, Theorem 7.B.3).

Assume therefore that $x(t)$ admits an $\omega$-limit point $\olimit \neq \eq$, so $x(t_{n})\to \olimit$ for some sequence of times $t_{n}\uparrow\infty$.
Since $\abs{\dot x_{\pure}(t)}$ is bounded from above by $\dynfield_{\max} \equiv \sup_{x\in\strat} \max_{\purealt} \abs{\dynfield_{\purealt}(x)} < \infty$, there exists an open neighborhood $U$ of $\olimit$ and positive $a,\delta,n_{0} > 0$ such that $x(t) \in U$ and $\braket{\payv(x(t))}{x(t) - \eq} \leq -a < 0$ for all $t\in[t_{n},t_{n}+\delta]$ and all $n\geq n_{0}$.
Hence, by \eqref{eq:Breg-calc}, we get
\begin{equation}\label{eq:and}
\breg(\eq,x(t_{n}+\delta)) - \breg(\eq,x(0))
	\leq \int_{0}^{t_{n}+\delta} \braket{\payv(x(s))}{x(s) - \eq} \dd s
	\leq -a(n-n_0)\delta.
\end{equation}
We thus get $\liminf_{t\to\infty} \breg(\eq,x(t)) = -\infty$, a contradiction.

For the discontinuous case, note first that since $\eq - x \in \tcone_{\strat}(x)$, Moreau's decomposition theorem implies that $\product{\tproj_{x}(\payv^{\sharp}(x))}{x - \eq}_{x} \leq \product{\payv^{\sharp}(x)}{x-\eq}_{x}$.
Thus, replacing the first equality in \eqref{eq:Breg-calc2} by an inequality, \eqref{eq:Breg-calc} shows that $\breg(\eq,\argdot)$ is a strict global Lyapunov function for \eqref{eq:HD}.
Global asymptotic stability then follows from \cref{prop:omega}.
\end{proof}

The only implication of $\game$ being strictly contractive used in the previous proof is that its \acl{NE} is a \ac{GESS}.
More generally, if a game admits an \ac{ESS} $\eq$, applying the above arguments in a neighborhood of $\eq$ defined by a level set of $\breg(\eq,\cdot)$ yields the following result:

\begin{theorem}
\label{thm:ESS}
Evolutionarily stable states are asymptotically stable under \eqref{eq:HD}.
\end{theorem}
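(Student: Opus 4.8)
The plan is to obtain asymptotic stability by localizing the proof of Theorem~\ref{thm:contractive} to a sublevel set of the Bregman divergence $\breg(\eq,\argdot)$. First note that an \ac{ESS} is in particular a Nash equilibrium, hence a restricted equilibrium, and therefore a rest point of \eqref{eq:HD} by Proposition~\ref{prop:stationary}. By definition of an \ac{ESS} there is a neighborhood $W$ of $\eq$ in $\strat$ on which $\braket{\payv(x)}{x-\eq}<0$ for all $x\neq\eq$; shrinking $W$ if necessary I may assume that this strict inequality holds on the closure $\cl(W)$ away from $\eq$. Since $\breg(\eq,\argdot)$ is continuous on its effective domain, vanishes only at $\eq$ (Eq.~\eqref{eq:BPD}), and has compact sublevel sets there (Proposition~\ref{prop:Bregman} and the remark following it), I can pick $c>0$ small enough that the sublevel set $K_{c}=\setdef{x}{\breg(\eq,x)\leq c}$ — taken in $\good(\eq)$ in the continuous case, and in $\strat$ in the discontinuous case — is a compact subset of $W$. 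It then suffices to show that $\breg(\eq,\argdot)$ is a strict Lyapunov function on $K_{c}$ whose only zero of the rate is $\eq$, which forces $K_{c}$ to be forward invariant and $x(t)\to\eq$.

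For continuous Hessian dynamics, every solution issuing from $\good(\eq)$ has constant support containing $\supp(\eq)$ by Proposition~\ref{prop:wp}\eqref{itm:wp-min}, so Proposition~\ref{prop:Bregman-grad} and the computation \eqref{eq:Breg-calc} give $\tfrac{d}{dt}\breg(\eq,x)=\braket{\payv(x)}{x-\eq}$ along solutions; on $K_{c}\subseteq W$ this is $\leq0$, with equality only at $\eq$. Ruling out $\omega$-limit points $\olimit\neq\eq$ then proceeds exactly as in the proof of Theorem~\ref{thm:contractive}(i): the trajectory stays in $\cl(W)$, where $\braket{\payv(x)}{x-\eq}$ is bounded above by some $-a<0$ on a neighborhood of any $\olimit\neq\eq$, and integrating along the recurrence times yields $\liminf_{t}\breg(\eq,x(t))=-\infty$, contradicting $\breg\geq0$. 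Hence $x(t)\to\eq$, and since the sets $\setdef{x\in\good(\eq)}{\breg(\eq,x)<c'}$ for $c'\leq c$ form a neighborhood basis of $\eq$ in $\good(\eq)$, this gives asymptotic stability (with basin containing a neighborhood of $\eq$ in $\good(\eq)$, in parallel with Theorem~\ref{thm:contractive}(i)).

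For discontinuous Hessian dynamics, steepness is absent, so $\breg(\eq,\argdot)$ is finite and continuous on all of $\strat$ and $K_{c}$ is a genuine neighborhood of $\eq$ in $\strat$. Along a Carathéodory solution, Proposition~\ref{prop:Bregman-grad} and Moreau's decomposition theorem give $\tfrac{d}{dt}\breg(\eq,x)=\product{\tproj_{x}(\payv^{\sharp}(x))}{x-\eq}_{x}\leq\product{\payv^{\sharp}(x)}{x-\eq}_{x}=\braket{\payv(x)}{x-\eq}$ (using $\eq-x\in\tcone_{\strat}(x)$), which is $\leq0$ on $K_{c}$ and vanishes only at $\eq$. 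Thus $K_{c}$ is forward invariant, $\breg(\eq,\argdot)$ is a strict Lyapunov function on it, and the rate bound $\tfrac{d}{dt}\breg(\eq,x)\leq\braket{\payv(x)}{x-\eq}$ has continuous (hence \acl{lsc}) right-hand side vanishing on $K_{c}$ only at $\eq$; Proposition~\ref{prop:omega} then shows that every $\omega$-limit point in $K_{c}$ equals $\eq$, so $\eq$ is asymptotically stable with basin containing $\setdef{x\in\strat}{\breg(\eq,x)<c}$.

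The only real work is the localization bookkeeping of the first paragraph: ensuring $K_{c}$ lies inside the region where the strict \ac{ESS} inequality is available and, in the continuous case, where $\breg(\eq,\argdot)$ is finite and the trajectory's support is pinned down, and ensuring $K_{c}$ is compact so the LaSalle-type attraction argument closes. Once $K_{c}$ is fixed, every analytic step is a verbatim restriction of the proof of Theorem~\ref{thm:contractive}; the global inequality \eqref{eq:GESS} invoked there is used only on $K_{c}\subseteq W$, where it is precisely the \ac{ESS} hypothesis, so no new estimate is required.
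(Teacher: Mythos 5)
Your proposal is correct and follows essentially the same route as the paper: the paper's own proof is the one-line observation that the arguments of Theorem~\ref{thm:contractive} apply verbatim in a neighborhood of $\eq$ cut out by a level set of $\breg(\eq,\argdot)$, which is exactly the localization you carry out (support invariance plus \eqref{eq:Breg-calc} in the continuous case, Moreau's theorem plus Proposition~\ref{prop:omega} in the discontinuous case). Your write-up merely makes explicit the bookkeeping (choice of $c$, forward invariance of the sublevel set) that the paper leaves implicit.
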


\cite{HS90}, \cite{Hop99b}, and \cite{Har11} all offer results on the local stability of \emph{interior} evolutionarily stable states under Riemannian game dynamics.
\cite{HS90} showed that under all Riemannian (not necessarily Hessian) dynamics \eqref{eq:RGD}, the function $\Lyap(x) = \product{x-\eq}{x-\eq}_{\eq}$ is a strict local Lyapunov function for interior \acp{ESS} $\eq$, implying that $\eq$ is asymptotically stable.
Likewise, \cite{Hop99b} used linearization to establish local stability of \emph{regular} interior \acp{ESS} \citep{TJ78} under \eqref{eq:RGD}.
Finally, \cite{Har11} employed a version of the argument above to prove asymptotic stability of \acp{ESS} for separable Hessian dynamics of the form \eqref{eq:separable}.

An important case of contractive games that do not admit an \ac{ESS} is the class of conservative games, which include population games generated by matching in symmetric zero-sum games.
Under the replicator dynamics, the \ac{KL} divergence does not provide a strict Lyapunov function for conservative games, but rather a constant of motion.
The following result extends this conclusion to all Hessian dynamics:

\begin{proposition}
\label{prop:conservative}
Let $\eq$ be a \acl{NE} of a conservative game $\game$.
Then, $\breg(\eq,\argdot)$ is a constant of motion along any interior solution segment of \eqref{eq:HD}.
\end{proposition}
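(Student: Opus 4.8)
The plan is to differentiate $\breg(\eq,\argdot)$ along an interior solution of \eqref{eq:HD} and show the derivative vanishes identically, reusing the computation in the proof of Theorem~\ref{thm:contractive} but with the inequality there upgraded to an equality. First I would note that an interior solution segment $x(t)$ has constant support equal to $\act$, which both contains $\supp(\eq)$ and puts $x(t)\in\good(\eq)$, so $\breg(\eq,x(t))$ is finite along the segment by Proposition~\ref{prop:Bregman} (trivially so when $h$ is nonsteep). Since $x(t)$ is then a smooth curve of constant support, Proposition~\ref{prop:Bregman-grad} applies and gives
\[
\frac{d}{dt}\,\breg(\eq,x(t)) = \product{\dot x(t)}{x(t)-\eq}_{x(t)}.
\]

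Next I would simplify the right-hand side using interiority. For $x\in\intstrat$ we have $\domg(x)=\R^{\act}$ and $\metcone(x)=\zspace^{\act}$, so $\tproj_{x}$ is the $g$-orthogonal projection of $\R^{\act}$ onto $\zspace^{\act}$ (cf.~\eqref{eq:ProjDomInt}), for both the continuous and the discontinuous regime of \eqref{eq:HD}. Because $x-\eq\in\zspace^{\act}$, the residual $\payv^{\sharp}(x)-\tproj_{x}(\payv^{\sharp}(x))$ is $g$-orthogonal to $x-\eq$, so $\dot x=\tproj_{x}(\payv^{\sharp}(x))$ may be replaced by $\payv^{\sharp}(x)$ inside $\product{\argdot}{\argdot}_{x}$; the defining relation \eqref{eq:sharp} of the sharp operator then converts $\product{\payv^{\sharp}(x)}{x-\eq}_{x}$ into $\braket{\payv(x)}{x-\eq}$. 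This is exactly the chain \eqref{eq:Breg-calc} with every step an equality, so $\tfrac{d}{dt}\breg(\eq,x(t))=\braket{\payv(x(t))}{x(t)-\eq}$.

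It then remains to show $\braket{\payv(x)}{x-\eq}=0$. Here I would invoke conservativeness: since \eqref{eq:contract2} binds for every pair, applying it to $(x,\eq)$ gives $\braket{\payv(x)-\payv(\eq)}{x-\eq}=0$, hence $\braket{\payv(x)}{x-\eq}=\braket{\payv(\eq)}{x-\eq}$. For an interior equilibrium $\eq$, condition \eqref{eq:Nash} forces every strategy to earn a common payoff $\payv^{\ast}$, i.e.~$\payv(\eq)=\payv^{\ast}\onerow$, so $\braket{\payv(\eq)}{x-\eq}=\payv^{\ast}\insum_{\alpha}(x_{\alpha}-\eq_{\alpha})=0$. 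Combining these equalities yields $\tfrac{d}{dt}\breg(\eq,x(t))\equiv 0$, which is the claim; it generalizes the familiar invariance of the \ac{KL} divergence $\dkl(\eq,\argdot)$ under the replicator dynamics in zero-sum matching games.

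I do not anticipate a genuine obstacle, since the analytic heavy lifting is already done in Proposition~\ref{prop:Bregman-grad}; the one step to treat carefully is dropping the tangent projection in passing from $\product{\tproj_{x}(\payv^{\sharp}(x))}{x-\eq}_{x}$ to $\product{\payv^{\sharp}(x)}{x-\eq}_{x}$. This is where interiority is used in an essential way: only in the interior do both regimes of \eqref{eq:HD} reduce to orthogonal projection onto the fixed subspace $\zspace^{\act}$ with $x-\eq$ lying in it, and this is precisely why at boundary states only the inequality $\product{\tproj_{x}(\payv^{\sharp}(x))}{x-\eq}_{x}\leq\product{\payv^{\sharp}(x)}{x-\eq}_{x}$ — hence mere monotonicity rather than invariance of $\breg(\eq,\argdot)$ — can be salvaged.
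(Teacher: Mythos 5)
Your argument is correct and follows the same route as the paper: the paper's proof is literally the remark that the chain \eqref{eq:Breg-calc} from the proof of Theorem \ref{thm:contractive} binds when $\game$ is conservative and $x\in\intstrat$, which is exactly your computation \textendash\ Proposition \ref{prop:Bregman-grad}, dropping the projection by $g$-orthogonality of the residual to $\zspace^{\act}$ at interior states, and then invoking conservativeness.

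The one substantive point is your final step, and it deserves emphasis. To get $\braket{\payv(x)}{x-\eq}=0$ you reduce, via conservativeness, to $\braket{\payv(\eq)}{x-\eq}$ and then assume $\eq$ is \emph{interior} so that $\payv(\eq)\propto\onerow$. The proposition as stated only requires the \emph{solution segment} to be interior, not $\eq$; for a boundary Nash equilibrium, conservativeness together with \eqref{eq:Nash-variational} yields only $\braket{\payv(x)}{x-\eq}=\braket{\payv(\eq)}{x-\eq}\leq 0$, and the conclusion can genuinely fail. For instance, matching in the symmetric zero-sum game with $A_{11}=A_{22}=0$, $A_{12}=1$, $A_{21}=-1$ is conservative and $\bvec_{1}$ is a (strict) Nash equilibrium, yet under the replicator dynamics one computes $\frac{d}{dt}\dkl(\bvec_{1},x(t))=-x_{2}(t)<0$ along interior solutions, so the divergence is strictly decreasing rather than constant. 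Thus the hypothesis you smuggled in (interiority of $\eq$, or more generally that \emph{all} strategies earn the common equilibrium payoff at $\eq$) is not a convenience but a necessity; the paper's one-line proof glosses over exactly this point. Your write-up is therefore the more careful one \textendash\ just state explicitly that you are proving the claim under that additional (and needed) assumption on $\eq$, rather than presenting it as following from the statement's hypotheses.
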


\begin{proof}
Simply note that \eqref{eq:Breg-calc} binds if $\game$ is conservative and $x\in\intstrat$.
\end{proof}

\begin{remark}
\label{rem:GenHess}
In the definition of \eqref{eq:HD}, we required that $h$ be finite throughout $\strat$.
This requirement is unnecessary for the preceding results when $\eq$ is interior;
however, if $\eq$ lies on the boundary of $\strat$, the proofs of \cref{thm:contractive,thm:ESS} do not go through because $\breg(\eq, \argdot)$ is no longer well-defined throughout $\good(\eq)$.
Nevertheless, the results themselves remain true if $g = \hess h$ is separable, allowing us to handle the $p$-replicator dynamics for $p \geq 2$ (\cref{ex:pReplicator}).
To prove this, it suffices to replace the implicit summation in $h'(x;\eq-x)$ over all strategies with a sum extending over only the strategies that lie in the support of $\eq$.
\end{remark}

\subsection{Convergence of time-averaged trajectories and permanence}
\label{sec:averages}

We now extend two classic results for the replicator dynamics in random matching games (\cref{ex:matching}) to Hessian dynamics.
The results for these games take advantage of the linearity of payoffs $\payv_{\pure}(x) = \insum_{\purealt\in\pures} A_{\pure\purealt} x_{\purealt}$ in the population state.

The first such result states that if a solution $x(t)$ of the replicator dynamics stays a positive distance away from the boundary of the simplex, then the time-averaged orbit
\(
\bar x(t)
	= t^{-1} \int_{0}^{t} x(s) \dd s
\)
converges to the set of \aclp{NE} of the underlying game \citep{SSHW81}.
The class of games to which this result applies includes zero-sum games (cf.~\cref{prop:conservative}) and games satisfying sufficient conditions for permanence (cf.~\cref{prop:perm} below).
The following proposition shows that this convergence property extends to all Hessian dynamics:

\begin{proposition}
\label{prop:averages}
Let $\game$ be a random matching game and let $x(t)$ be a solution orbit of \eqref{eq:HD}.
If $x(t)$ is contained in a compact subset of $\intstrat$, the time-averaged orbit $\bar x(t) = t^{-1} \int_{0}^{t} x(s) \dd s$ converges to the set of Nash equilibria of $\game$.
\end{proposition}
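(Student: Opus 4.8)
The plan is to integrate the Bregman identity of Proposition~\ref{prop:Bregman-grad} along the orbit, convert the resulting time average into a statement about $\bar x(t)$ by exploiting the linearity of payoffs in a random matching game, and then read off that every limit point of $\bar x(t)$ equalizes all payoffs. Fix first an interior reference point $\base\in\intstrat$. After replacing the compact set containing the orbit by the closed convex hull of $\{x(s):s\geq0\}$, we may assume it to be a compact convex $K\subseteq\intstrat$, so that $\bar x(t)\in K$ for all $t>0$ and the curve $x(\cdot)$ never meets $\bd(\strat)$. On $\intstrat$ the vector field of \eqref{eq:HD} is $C^{1}$ (payoffs are linear, $g^{\sharp}$ is $C^{1}$), so $x(\cdot)$ is a classical $C^{1}$ solution, irrespective of whether the dynamics are of the continuous or the discontinuous type; moreover $x(t)-\base\in\zspace^{\act}=\metcone(x(t))$ for all $t$, so Proposition~\ref{prop:Bregman-grad} and the definitions of the tangent projection and of $\payv^{\sharp}$ give
\[
\frac{d}{dt}\breg(\base,x(t))
	=\product{\dot x(t)}{x(t)-\base}_{x(t)}
	=\product{\payv^{\sharp}(x(t))}{x(t)-\base}_{x(t)}
	=\braket{\payv(x(t))}{x(t)-\base}.
\]
Integrating on $[0,t]$, dividing by $t$, and using that $\breg(\base,\argdot)$ is continuous \textendash\ hence bounded \textendash\ on $K$ (here $\base$ interior and $h\in C^{3}$ on $\intstrat$ make the possible blow-up of $h$ on $\bd(\strat)$ irrelevant), I obtain $\tfrac1t\int_0^t\braket{\payv(x(s))}{x(s)-\base}\dd s\to0$ as $t\to\infty$, for every $\base\in\intstrat$.

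Next I would use linearity. Since $\payv_{\alpha}(x)=\insum_{\beta}A_{\alpha\beta}x_{\beta}$, commuting the finite sum with the integral yields $\tfrac1t\int_0^t\braket{\payv(x(s))}{\base}\dd s=\braket{\payv(\bar x(t))}{\base}$; hence, writing $\phi(t)=\tfrac1t\int_0^t\braket{\payv(x(s))}{x(s)}\dd s$ for the ($\base$-independent) average self-payoff, the previous limit reads $\phi(t)-\braket{\payv(\bar x(t))}{\base}\to0$ for every $\base\in\intstrat$. Now take any sequence $t_{k}\to\infty$ with $\bar x(t_{k})\to\olimit$ (one exists because $\bar x(t)\in K$ compact). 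Fixing some $\base_{0}\in\intstrat$ and letting $k\to\infty$ gives $\phi(t_{k})\to\braket{\payv(\olimit)}{\base_{0}}=:c$; feeding this back into the same limit for an arbitrary $\base\in\intstrat$ forces $\braket{\payv(\olimit)}{\base}=c$ for all $\base\in\intstrat$, and hence, by continuity of $\base\mapsto\braket{\payv(\olimit)}{\base}$, for all $\base\in\strat$. Equivalently $\braket{\payv(\olimit)}{z}=0$ for all $z\in\zspace^{\act}$, i.e.\ $\payv(\olimit)\propto\onerow$: every strategy \textendash\ not only those in $\supp(\olimit)$ \textendash\ earns the same payoff at $\olimit$, so $\olimit$ satisfies \eqref{eq:Nash} and is a Nash equilibrium of $\game$. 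Since $\bar x(t)$ stays in the compact set $K$ and the Nash set is closed, having every limit point in the Nash set shows that $\bar x(t)$ converges to the Nash set of $\game$, as claimed.

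I expect the real obstacle to be the first step: establishing the identity $\tfrac{d}{dt}\breg(\base,x(t))=\braket{\payv(x(t))}{x(t)-\base}$ together with the boundedness of $\breg(\base,\argdot)$ along the orbit, simultaneously for continuous and discontinuous Hessian dynamics and in the presence of potentials that are infinite on $\bd(\strat)$ (as for the $p$-replicator dynamics with $p\geq2$, cf.\ Remark~\ref{rem:GenHess}). The two reductions made above dispose of this: taking $\base\in\intstrat$ keeps $\breg(\base,\argdot)$ finite and continuous on $K$, and an orbit confined to a compact subset of $\intstrat$ is a genuine $C^{1}$ curve that never sees the boundary, so only the smooth-curve case of Proposition~\ref{prop:Bregman-grad} is used. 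The remaining ingredients \textendash\ linearity, extraction of limit points, and the final compactness argument \textendash\ are routine.
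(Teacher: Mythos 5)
Your proposal is correct and follows essentially the same route as the paper: integrate the identity $\frac{d}{dt}\breg(\base,x(t))=\braket{\payv(x(t))}{x(t)-\base}$ (from Proposition \ref{prop:Bregman-grad} plus interiority of the orbit) along the solution, use compactness in $\intstrat$ to bound the Bregman divergence so the time average of the integrand vanishes, then use linearity of random-matching payoffs to transfer this to $\bar x(t)$ and conclude that every $\omega$-limit point of $\bar x(t)$ equalizes all payoffs. The only difference is cosmetic: the paper takes the reference points to be the vertices $\bvec_{\alpha}$ and forms pairwise differences $\legendre_{\alpha}-\legendre_{\beta}$, whereas you take arbitrary interior reference points and eliminate the average self-payoff term by varying $\base$ \textendash\ a variant that incidentally avoids evaluating $h$ at the vertices.
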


In the case of the replicator dynamics, this is proved by introducing the auxiliary variables $\legendre_{\pure} = \log x_{\pure}$ and using the fact that $\dot\legendre = \dot x/x$.
To extend this proof to \eqref{eq:HD}, we instead define $\legendre$ via the Bregman divergence of $h$:

\begin{proof}[Proof of \cref{prop:averages}]
Let $\legendre_{\pure} = \breg(\bvec_{\pure},x)$, so $\dot \legendre_{\pure} = \braket{\payv(x)}{x - \bvec_{\pure}} = \braket{\payv(x)}{x} - \payv_{\pure}(x)$ by \cref{lem:Bregman-grad}.
Then, for all $\pure,\purealt\in\pures$, we get
\begin{equation}
\label{eq:Legendre-diff}
\legendre_{\pure}(t) - \legendre_{\purealt}(t)
	= c_{\pure\purealt} + \int_{0}^{t} \left[ \payv_{\purealt}(x(s)) - \payv_{\pure}(x(s)) \right] \dd s,
\end{equation}
where $c_{\pure\purealt} = \legendre_{\pure}(0) - \legendre_{\purealt}(0)$.
Since $x(t)$ is contained in a compact subset of $\intstrat$, \cref{lem:Bregman} implies that $\sup_{t} \legendre_{\pure}(t) < \infty$ for all $\pure\in\pures$.
Thus, dividing both sides of \eqref{eq:Legendre-diff} by $t$ and taking the limit $t\to\infty$, we obtain
\begin{equation}
\label{eq:av3}
\lim_{t\to\infty} \left[ \payv_{\pure}(\widebar x(t)) - \payv_{\purealt}(\widebar x(t)) \right]
	= 0,
\end{equation}
where we have used the linearity of $\payv_\pure(x) = \sum_\purealt A_{\pure\purealt}x_\purealt$ in $x$ to bring the integral into the arguments of $\payv_\pure$ and $\payv_\purealt$.

Equation \eqref{eq:av3} implies that if $\bareq$ is an $\omega$-limit point of $\bar x(t)$, then $\payv_{\pure}(\bareq) = \payv_{\purealt}(\bareq)$ for all $\pure,\purealt\in\pures$, so $\bareq$ is a \acl{NE} of $\game$.
Since $\strat$ is compact, every solution of \eqref{eq:HD} converges to its $\omega$-limit set, and
our assertion follows.
\end{proof}

\cref{prop:averages} applies when the population share of each strategy remains bounded away from zero along all interior solution trajectories, a property known as \emph{permanence}. 
Formally, a dynamical system on $\strat$ is called \emph{permanent} if there exists a threshold $\delta>0$ such that every interior solution satisfies $\liminf_{t\to\infty} x_{\pure}(t) \geq \delta$ for all $\pure\in\pures$.

\cite{HS98} establish a sufficient condition for permanence under the replicator dynamics.
\cref{prop:perm} extends this result to all continuous Hessian dynamics, providing a sufficient condition for \cref{prop:averages} to apply:

\begin{proposition}
\label{prop:perm}
Let $\game$ be a random matching game.
Assume that the dynamics \eqref{eq:HD} are continuous and there exists some $p\in\intstrat$ such that
\begin{equation}
\label{eq:perm-cond}
\braket{\payv(\eq)}{p - \eq}
	> 0
	\quad
	\text{for all boundary rest points $\eq$ of \eqref{eq:HD}}.
\end{equation}
Then, the dynamics \eqref{eq:HD} are permanent.
\end{proposition}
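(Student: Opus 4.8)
The plan is to adapt the classical average Lyapunov function argument of \cite{HS98} for the replicator dynamics, transporting it to the Hessian setting via the Bregman divergence of $h$. Since the dynamics \eqref{eq:HD} are continuous, the potential $h$ is steep, and we set
\begin{equation*}
P(x) = \exp\!\Bigl(-\sum_{\alpha} p_{\alpha}\,\breg(\bvec_{\alpha},x)\Bigr)
	\quad\text{for }x\in\intstrat,
	\qquad
	P(x) = 0
	\quad\text{for }x\in\bd(\strat).
\end{equation*}
By steepness of $h$ (cf.\ Proposition \ref{prop:Bregman} and the ensuing remark), $\breg(\bvec_{\alpha},x)\to+\infty$ whenever $x_{\alpha}\to0^{+}$, so $P$ is continuous on $\strat$, strictly positive on $\intstrat$, and vanishes on $\bd(\strat)$. (For the Gibbs entropy $h(x)=\sum_{\alpha}x_{\alpha}\log x_{\alpha}$ one computes $\breg(\bvec_{\alpha},x)=-\log x_{\alpha}$, so $P(x)=\prod_{\alpha}x_{\alpha}^{p_{\alpha}}$ and $P$ reduces to the average Lyapunov function of \cite{HS98}.)

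The second step is to compute the growth rate of $P$ along interior solutions. By Proposition \ref{prop:wp}\eqref{itm:wp-min} every interior solution $x(t)$ of \eqref{eq:HD} has full, constant support, so $P(x(t))>0$ and $t\mapsto\breg(\bvec_{\alpha},x(t))$ is differentiable; Proposition \ref{prop:Bregman-grad}, combined with $\dot x=\tproj_{x}(\payv^{\sharp}(x))$ and the description of $\tproj_{x}$ for minimal-rank extendable metrics, gives
\begin{equation*}
\frac{d}{dt}\breg(\bvec_{\alpha},x(t))
	= \product{\dot x(t)}{x(t)-\bvec_{\alpha}}_{x(t)}
	= \braket{\payv(x(t))}{x(t)} - \payv_{\alpha}(x(t)).
\end{equation*}
Multiplying by $p_{\alpha}$, summing over $\alpha$, and using $\sum_{\alpha}p_{\alpha}=1$ yields
\begin{equation*}
\frac{d}{dt}\log P(x(t))
	= \braket{\payv(x(t))}{p - x(t)}
	=: \psi(x(t)),
\end{equation*}
where $\psi(x)=\braket{\payv(x)}{p-x}$ is continuous — in fact a quadratic polynomial, by linearity of the payoffs — on all of $\strat$. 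Condition \eqref{eq:perm-cond} states exactly that $\psi>0$ at every rest point of \eqref{eq:HD} lying on $\bd(\strat)$.

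The last step is to invoke Hofbauer's average Lyapunov function theorem for permanence (see \cite{HS98}). The state space $\strat$ is compact, and by Proposition \ref{prop:wp}\eqref{itm:wp-min} the boundary $\bd(\strat)$ is forward invariant, with the restriction of \eqref{eq:HD} to each face of $\strat$ again continuous Hessian dynamics for the corresponding sub-game. Positivity of $\psi$ at all rest points on $\bd(\strat)$ then forces the time-average $T^{-1}\int_{0}^{T}\psi(x(t))\,dt$ along every orbit contained in $\bd(\strat)$ to be eventually positive, by the same face-by-face argument used for the replicator dynamics in \cite{HS98}; hence $P$ is an average Lyapunov function for \eqref{eq:HD}, and Hofbauer's theorem produces a compact set $K\subset\intstrat$ that attracts every interior orbit with a uniform margin, i.e.\ \eqref{eq:HD} is permanent.

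The computation above is a verbatim transcription of the replicator case under the dictionary $-\log x_{\alpha}\leftrightarrow\breg(\bvec_{\alpha},x)$; the delicate points — and the place where care is really needed — are the two boundary issues: (i) the continuity of $P$ up to $\bd(\strat)$, which for a general (non-separable) steep $h$ hinges on the blow-up of $\breg(\bvec_{\alpha},\cdot)$ on faces omitting $\alpha$ (the strengthened steepness discussed after Proposition \ref{prop:Bregman}), and (ii) upgrading ``$\psi>0$ at boundary rest points'' to the hypothesis of Hofbauer's theorem, which proceeds exactly as in \cite{HS98}. For separable potentials that are infinite on $\bd(\strat)$ — e.g.\ the $p$-replicator dynamics with $p\geq2$ — one replaces each $\breg(\bvec_{\alpha},\cdot)$ by its truncation to the relevant support, as in Remark \ref{rem:GenHess}.
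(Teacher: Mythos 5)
Your proposal is correct and follows essentially the same route as the paper: the same average Lyapunov function $P(x)=\exp\bigl(-\sum_{\alpha}p_{\alpha}\breg(\bvec_{\alpha},x)\bigr)$, the same growth-rate identity $\frac{d}{dt}\log P=\braket{\payv(x)}{p-x}$ obtained from Proposition \ref{prop:Bregman-grad}, and the same appeal to the average Lyapunov theorem of \cite{HS98}. The only difference is one of compression: the step you delegate to ``the same face-by-face argument as in \cite{HS98}'' is where the paper spends most of its proof, namely an induction on the support of boundary initial conditions in which, for orbits that do not approach the boundary of their own face, the bounded Bregman variables $\breg(\bvec_{\alpha},x(t))$ together with linearity of payoffs force time-averaged states to accumulate at restricted equilibria, which by Proposition \ref{prop:stationary} are boundary rest points so that \eqref{eq:perm-cond} yields the required positive time-average of $\Psi$.
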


The proof of \cref{prop:perm} follows the proof technique of Theorem 13.6.1 of \cite{HS98}, and is presented in \cref{app:proofs}.

\subsection{Dominated strategies}
\label{sec:dominated}

We conclude by considering the elimination and survival of dominated strategies under \eqref{eq:HD}.
To that end, recall that $\pure\in\pures$ is \emph{strictly dominated} by $\purealt\in\pures$ if $\payv_{\pure}(x) < \payv_{\purealt}(x)$ for all $x\in\strat$.
More generally, $p\in\strat$ is \emph{strictly dominated} by $q\in\strat$ if $\braket{\payv(x)}{p} < \braket{\payv(x)}{q}$ for all $x\in\strat$, meaning that the average payoff of a small influx of mutants is always higher when the mutants are distributed according to $q$ rather than $p$.
We then say that $p\in\strat$ \emph{becomes extinct} along $x(t)$ if $\min\{x_{\pure}(t):\pure\in\supp(\base)\} \to 0$ as $t\to\infty$ \textendash\ or equivalently, if there are no $\omega$-limit points of $x(t)$ in $\good(p)$.

Under the replicator dynamics, it is well known that dominated strategies become extinct along every interior solution trajectory \citep{Aki80}.
As we show below, this elimination result extends to all continuous Hessian dynamics \eqref{eq:HD}:

\begin{proposition}
\label{prop:dominated}
Under all continuous Hessian dynamics \eqref{eq:HD}, strictly dominated strategies become extinct along every interior solution orbit.
\end{proposition}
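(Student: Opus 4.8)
The plan is to transplant Akin's cross-entropy argument for the replicator dynamics \citep{Aki80} to the Hessian setting, with the Bregman potentials $\breg(\bvec_{\alpha},\argdot)$ playing the role of the coordinate logarithms $\log x_{\alpha}$ (indeed $\breg(\bvec_{\alpha},\argdot)=-\log x_{\alpha}$ in the replicator case). Suppose $p\in\strat$ is strictly dominated by $q\in\strat$, fix an interior solution $x(t)$ of \eqref{eq:HD}, and put $\eps=\min_{x\in\strat}\braket{\payv(x)}{q-p}$; this minimum is attained and strictly positive because $\payv$ is continuous, $\strat$ is compact, and $\braket{\payv(x)}{q-p}>0$ for all $x$ by strict domination. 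Since the dynamics are continuous, Proposition \ref{prop:wp}\eqref{itm:wp-min} shows that $x(t)$ has constant support $\act$, so each $\breg(\bvec_{\alpha},x(t))$ is finite; moreover, at an interior point $\tproj_{x}$ is the $g$-orthogonal projection of $\R^{\act}$ onto $\zspace^{\act}$ and $x-\bvec_{\alpha}\in\zspace^{\act}$, so exactly as in the proof of Proposition \ref{prop:averages} one gets $\tfrac{d}{dt}\breg(\bvec_{\alpha},x(t))=\braket{\payv(x(t))}{x(t)}-\payv_{\alpha}(x(t))$. Consider then $L(t)=\sum_{\alpha\in\act}(p_{\alpha}-q_{\alpha})\,\breg(\bvec_{\alpha},x(t))$. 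Because $\sum_{\alpha}(p_{\alpha}-q_{\alpha})=0$, the common term $\braket{\payv(x)}{x}$ cancels and $\dot L(t)=\braket{\payv(x(t))}{q-p}\ge\eps>0$, so $L(t)\to+\infty$.

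To pass from $L$ to the divergence from $p$ itself, note the elementary identity $\sum_{\alpha}\mu_{\alpha}\,\breg(\bvec_{\alpha},x)=\breg(\mu,x)+\bigl(\sum_{\alpha}\mu_{\alpha}h(\bvec_{\alpha})-h(\mu)\bigr)$, valid for every $\mu\in\strat$ and every $x\in\intstrat$ (where $h$ is differentiable), the bracket being a finite constant. Hence $L(t)=\breg(p,x(t))-\breg(q,x(t))+c$ for a constant $c$, and since $\breg(q,\argdot)\ge0$ by \eqref{eq:BPD}, we obtain $\breg(p,x(t))\ge L(t)+c\to+\infty$. It therefore suffices to show that $\breg(p,x(t))\to+\infty$ precludes $x(t)$ from having an $\omega$-limit point in $\good(p)$. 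Assume to the contrary that $x(t_{n})\to\olimit$ along some $t_{n}\to\infty$ with $\olimit\in\good(p)$, i.e.\ $\supp(p)\subseteq\supp(\olimit)$; the goal is to bound $\breg(p,x(t_{n}))$ and derive a contradiction. Writing $\breg(p,x(t_{n}))=h(p)-h(x(t_{n}))-\sum_{\gamma\in\act}\partial_{\gamma}h(x(t_{n}))\,\bigl(p_{\gamma}-(x(t_{n}))_{\gamma}\bigr)$, the first two terms converge by continuity of $h$ on $\clorthant$. For $\gamma\in\supp(\olimit)$, the coordinate $(x(t_{n}))_{\gamma}$ stays bounded away from $0$ and, by convexity of $h$, $\partial_{\gamma}h(x(t_{n}))$ is squeezed between difference quotients of $h$ that converge to the (face-)derivative $\partial_{\gamma}h(\olimit)$, so those summands converge. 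For $\gamma\notin\supp(\olimit)$ we have $p_{\gamma}=0$ (as $\supp(p)\subseteq\supp(\olimit)$), while convexity of $h$ gives $h(x_{n})-h\bigl(x_{n}-(x_{n})_{\gamma}\bvec_{\gamma}\bigr)\le (x_{n})_{\gamma}\,\partial_{\gamma}h(x_{n})\le h\bigl(x_{n}+(x_{n})_{\gamma}\bvec_{\gamma}\bigr)-h(x_{n})$ (with $x_{n}=x(t_{n})$); both bounds tend to $h(\olimit)-h(\olimit)=0$ since $(x_{n})_{\gamma}\to0$ and $h$ is continuous on $\clorthant$, so $(x_{n})_{\gamma}\,\partial_{\gamma}h(x_{n})\to0$. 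Altogether $\breg(p,x(t_{n}))$ converges to a finite limit, contradicting $\breg(p,x(t))\to+\infty$; hence $p$ becomes extinct.

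The Lyapunov computation and the constant-shift identity are routine once Propositions \ref{prop:wp} and \ref{prop:Bregman-grad} are in hand. The genuine difficulty is the last part — controlling $\breg(p,\argdot)$, i.e.\ establishing its continuity along $\good(p)$, as the interior trajectory approaches a boundary $\omega$-limit point that sits on a lower-dimensional face of $\strat$. This is exactly where the finiteness of $h$ on the closed orthant $\clorthant$ (part of the definition of Hessian dynamics) and the smoothness of $h$ on each face are essential; for separable potentials the required boundedness of $\breg(p,\argdot)$ can alternatively be read off directly from the estimates underlying Proposition \ref{prop:Bregman} and its accompanying remark.
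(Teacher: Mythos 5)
Your proof is correct and follows essentially the same route as the paper: your $L(t)$ differs from the paper's Lyapunov quantity $\breg(p,x(t))-\breg(q,x(t))$ only by an additive constant, and both arguments use Proposition \ref{prop:Bregman-grad} together with interiority to get a time derivative equal to $\braket{\payv(x)}{q-p}\geq\eps>0$, conclude $\breg(p,x(t))\to\infty$, and rule out $\omega$-limit points in $\good(p)$. Your closing convexity-sandwich argument merely spells out the boundedness of $\breg(p,\argdot)$ along the trajectory near a limit point of $\good(p)$, a step the paper dispatches with a citation to Proposition \ref{prop:Bregman}.
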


\begin{proof}
The proof follows a standard argument for the replicator dynamics, replacing the \ac{KL} divergence \eqref{eq:KL} with the Bregman divergence \eqref{eq:Bregman}.
Specifically,
\cref{lem:Bregman-grad} implies that along any interior solution $x(t)$,
\begin{multline}
\label{eq:DSProof}
\frac{d}{dt}\left(\breg(p,x) - \breg(q,x)\right)
	= \product{\dot x}{x - p}_{x} - \product{\dot x}{x - q}_{x}
	\\
	= \product{\tproj_{x}(\payv^{\sharp}(x))}{q - p}_{x}
	= \product{\payv^{\sharp}(x)}{q-p}_x
	= \braket{\payv(x)}{q-p},
\end{multline}
where the penultimate equality uses the fact that $x\in\intstrat$.
Since $q$ strictly dominates $p$ and $\payv$ is continuous, we have $\braket{\payv(x)}{q-p} \geq a$ for some positive constant $a>0$, implying that $\breg(p,x(t))\to\infty$.
Hence, by \cref{lem:Bregman}, we conclude that $x(t)$ has no $\omega$-limit points in $\good(p)$.
\end{proof}

In general, the conclusion of \cref{prop:dominated} is false for discontinuous Hessian dynamics:
\cite{SDL08} construct a four-strategy game with a strictly dominated strategy that is played recurrently by a nonnegligible fraction of the population under the Euclidean projection dynamics \eqref{eq:PD}.
The argument above shows that this strategy must become less common when the state is in the interior of $\strat$;
however, solutions to \eqref{eq:PD} are able to enter and leave the boundary of $\strat$, and while there, dominated strategies may become more common.
We conjecture that this construction can be suitably extended to all discontinuous Hessian dynamics, but we do not tackle this issue here.

\section{Links with reinforcement learning}
\label{sec:RL}

\renewcommand{\vecspace}{\R^{n}}
\renewcommand{\dspace}{(\R^{n})^{\ast}}

Under a variety of reinforcement learning processes for normal form games, mixed strategies evolve according to the replicator dynamics \textendash\ see e.g. \cite{BS97}, \cite{Pos97}, \cite{Rus99}, \cite{Hop02} and \cite{HSV09}.
We conclude the paper by describing a broader connection between reinforcement learning and Hessian game dynamics.

\subsection{Reinforcement learning}

Our starting point is a class of reinforcement learning dynamics for $N$-player normal form games introduced by \cite{CGM15} and \cite{MS16}.
Over the course of play, each player maintains a \emph{score vector} representing the cumulative payoffs of each of his strategies;
then, at each moment in time, the player selects a mixed strategy by applying a \emph{choice map} to this score vector, similar in function to the perturbed best response maps used in stochastic fictitious play and perturbed best response dynamics \citep{FL98,HS02,HS07}.

Formally, let $\payv_{k\pure}(x)$ denote the expected payoff of the $\pure$-th strategy of player $k$ at mixed strategy profile $x = (x_{1},\dotsc, x_{N})$ in an $N$-player normal form game.
The \emph{choice map} $\choice_{k}$ of player $k$ is then defined as
\begin{equation}
\label{eq:choice}
\choice_{k}(y_{k})
	= \argmax_{x_k \in \strat_k} \{\braket{y_{k}}{x_{k}} - h_k(x_{k}) \},
\end{equation}
where $\strat_{k} \equiv \simplex(\pures_{k})$ denotes the mixed strategy space of player $k$ ($\pures_{k}$ being the corresponding strategy set), and $h_{k}\from\strat_k\to\R$ is a smooth, strongly convex \emph{penalty function}.
The reinforcement learning process described above can then be written as
\begin{equation}
\label{eq:RL}
\tag{RL}
\begin{aligned}
\dot y_{k}
	&= \payv_{k}(x)
	\\
x_{k}
	&= \choice_{k}(y_{k}).
\end{aligned}
\end{equation}

Now, let $g_{k} = \hess h_{k}$ and write $\normal_{k\pure}(x) = \sum_{\purealt\in\pures_{k}} g_{k,\pure\purealt}^{-1}(x)$.
\cite{MS16} showed that when the mixed strategy profile $x(t)$ is interior, its evolution under \eqref{eq:RL} is given by
\begin{equation}
\label{eq:RLD}
\tag{RLD}
\dot x_{k\pure}
	= \sum_{\purealt\in\pures_{k}} \left[
		g_{k,\pure\purealt}^{-1}(x) - \frac{\normal_{k\pure}(x) \normal_{k\purealt}(x)}{\sum_{\gamma} \normal_{k\gamma}(x)}
	\right]
	\payv_{k\purealt}(x).
\end{equation}
A comparison with  \eqref{eq:RGD-coords2} shows that the dynamics of mixed strategies under \eqref{eq:RL} agree with the Hessian dynamics \eqref{eq:HD} at interior states.%
\footnote{While we have defined Riemannian game dynamics for single population games and reinforcement learning for $N$-person normal form games, this difference is of no consequence.
One can similarly define Riemannian game dynamics for multipopulation games, or a symmetrized reinforcement learning process for symmetric two-player normal form games (cf.~ \cref{sec:HDandRL}).}

\subsection{A common derivation of \eqref{eq:HD} and \eqref{eq:RLD}}\label{sec:CDHDRLD}

The derivation of \eqref{eq:HD} here and of \eqref{eq:RLD} in \cite{MS16} have very different starting points, leaving the reasons behind their equivalence somewhat mysterious.   
We now make these reasons clearer by deriving both dynamics using a common set of tools. 
Here we present the basic idea behind the argument, using the theory of convex duality to establish the equivalence of versions of \eqref{eq:HD} of \eqref{eq:RLD} defined on the entire positive orthant $\orthant$.%
\footnote{The usefulness of convex conjugates in analyzing maps of the form \eqref{eq:choice} is well known in learning and optimization \textendash\ see e.g.~\cite{NY83}, \cite{HS02}, \cite{SS11}, and \cite{MZ18}.}  Establishing the equivalence of the original processes on $\intstrat$ using this approach requires further ideas, which we present in \cref{sec:HDandRL}.

The starting point for both \eqref{eq:HD} and \eqref{eq:RLD} is the potential function $h$, assumed here to be smooth, strongly convex, and \emph{steep} at the boundary of $\clorthant$ in the sense of \eqref{eq:steep}.
The \emph{convex conjugate} of $h$ is then defined as%
\footnote{For a comprehensive treatment, see \citealp{Roc70} or \citealp{HUL01}.}
\begin{equation}
\label{eq:conjugate}
h^\ast(y)
	= \sup_{x \in\orthant} \{\braket{y}{x} - h(x)\},
	\quad
	y\in(\R^\pures)^\ast.
\end{equation}
Since $h$ is steep and strongly convex, the supremum above is attained at a unique point $\choicef(y) \in \orthant$ \cite[Theorem 26.5]{Roc70}.%
\footnote{Since $h$ is strongly convex, it is bounded below by a strictly convex quadratic function \citep[Theorem B.4.1.1]{HUL01}, which in turn implies that the domain of $h^\ast$ is $\dspace$ \cite[Corollary 13.1]{Roc70}.}
By the first-order optimality conditions for \eqref{eq:conjugate}, we then get
\begin{equation}
\label{eq:choice-app}
\choicef(y)
	\equiv \argmax_{x\in\orthant} \{\braket{y}{x} - h(x)\}
	= (Dh)^{-1}(y),
\end{equation}
where $(Dh)^{-1} $ is the inverse function of $Dh$. 
Applying the envelope theorem to \eqref{eq:conjugate}, we then obtain 
\begin{equation}
\label{eq:Legendre}
Dh^{\ast}(y)
	= \choicef(y).
\end{equation}
It follows from \eqref{eq:choice-app} and \eqref{eq:Legendre} that $Dh(Dh^{\ast}(y)) \equiv y$, so differentiating and rearranging yields
\begin{equation}
\label{eq:Legendre-hess}
\hess h^{\ast} (y)
	=( \hess h (x))^{-1}
\end{equation}
with the inverse of the  Hessian matrix of $h$ being evaluated at $x=\choicef(y)$.

We now introduce and compare full-dimensional analogues of continuous Hessian dynamics and (symmetric) reinforcement learning, taking the orthant $\orthant$ as the state space (and so defining payoffs $\payv$ on $\orthant$).
In the case of \eqref{eq:HD}, the full-dimensional domain makes projections redundant, so the induced dynamics take the form
\begin{equation}
\label{eq:HD+}
\tag{HD$_{\clorthant}$}
\dot x
	=\payv^{\sharp}(x)
	=g^{-1}(x) \, \payv(x)^{\ttop}
	=( \hess h (x))^{-1}\, \payv(x)^{\ttop}.
\end{equation}
For its part, the full-dimensional analogue of \eqref{eq:RL} is
\begin{equation}
\label{eq:RL+}
\tag{RL$_{\clorthant}$}
\begin{aligned}
\dot y
	&= \payv(x),
	\\
x
	&= \choicef(y),
\end{aligned}
\end{equation}
with $\choicef$ defined as in \eqref{eq:choice-app} above.
Differentiating \eqref{eq:RL+} and applying \eqref{eq:Legendre} and \eqref{eq:Legendre-hess} then yields
\begin{equation}
\label{eq:RLD+}
\tag{RLD$_{\clorthant}$}
\dot x
	= D\choicef(y)\, \payv(x)^{\ttop}
	= \hess h^{\ast}(y)  \,\payv(x)^{\ttop}
	=(\hess h(x))^{-1} \, \payv(x)^{\ttop}.
\end{equation}

We can express this argument in words.
By definition, (full-dimensional) Riemannian dynamics are obtained by transforming payoffs at an interior population state $x$ using the matrix $g^{-1}(x)$.
In the Hessian case, the metric $g$ admits a potential function $h$, so we have $g^{-1}(x) = (\hess h(x))^{-1}$ by definition.
As for reinforcement learning, differentiation shows that the dynamics of the mixed strategy $x$ are obtained by transforming payoffs at $x$ by the derivative matrix $D\choicef(y) $ of the choice map \eqref{eq:choice-app}.
Basic facts about convex conjugacy imply that this derivative is equal to $(\hess h (x))^{-1}$, where $h$ is the penalty function that generates $\choicef$.
This argument establishes the equivalence of \eqref{eq:HD+} and \eqref{eq:RLD+}.
For a corresponding argument for the original processes \eqref{eq:HD} and \eqref{eq:RLD}, see \cref{sec:HDandRL}.

\subsection{Boundary behavior in the nonsteep regime}

Since continuous Hessian dynamics coincide with the reinforcement scheme \eqref{eq:RL} when the penalty functions $h_{k}$ are steep, certain results for interior trajectories \textendash\ \cref{prop:averages,prop:dominated} in particular \textendash\ can be obtained directly from the analysis of \cite{MS16}.
On the other hand, in the nonsteep regime, \eqref{eq:RLD} and \eqref{eq:HD} agree at interior states,
but their behaviors at the boundary differ in a fundamental way.
Specifically, while  the boundary behavior of discontinuous Hessian dynamics is defined using closest point projections, the reinforcement learning process \eqref{eq:RL} for nonsteep $h$ can no longer be reduced to mixed strategy dynamics at all.
Instead one must work explicitly with the score variables $y_k$, which continue to aggregate payoff data of all strategies, even those that are not used.
Among other things, this means that a strong cumulative performance of an unused strategy will return it to use.  

This difference between how the processes are defined on the boundary has important consequences.
For instance, while \cite{SDL08} show that strictly dominated strategies may survive under the Euclidean projection dynamics, \cite{MS16} prove that the reinforcement learning process \eqref{eq:RL}  \emph{always} eliminates dominated strategies, whether the penalty functions are steep or not.
Under both processes, dominated strategies are initially eliminated along interior solution trajectories.
But while they may resurface under \eqref{eq:HD}, the score variables of \eqref{eq:RL} continue to register the poor performance of these strategies, ensuring that they remain extinct for all time.

\appendix

\section{Connections with other game dynamics}
\label{app:dynamics}

\newcommand{\zvecspace}{\R_{0}^{n}}
\newcommand{\mxspace}{\R^{n\times n}}
\newcommand{\mxspacea}{\R^{\pures\times\pures}}
\newcommand{\SPD}{S}

Throughout this appendix, we write $\onecol\in\vecspace$ for the $n$-dimensional column vector of ones and $\zproj = I - \frac1n\onecol\onecol^\ttop$ for the Euclidean orthogonal projection of $\vecspace$ onto $\zvecspace = \setdef{z\in\vecspace}{\onecol^{\ttop} z = 0} = \vspan(\onecol)^{\bot}$.%
\footnote{In the above and what follows, $W^{\bot}$ denotes the orthogonal complement of a subspace $W$ of $\vecspace$, defined with respect to the ordinary Euclidean metric.
Even though this might seem to suggest that the Euclidean metric plays a special role in what follows, it is just an artifact of writing everything in coordinates instead of abstractly;
for a detailed discussion, see \cite{Lee97}.}
Recall also that the (\emph{Moore-Penrose}) \emph{pseudoinverse} of a matrix $M \in \mxspace$ is the unique matrix $M^+\in \mxspace$ such that
\begin{inparaenum}%
[\textup(\itshape i\textup)]
\item
$M^{+} y = 0$ whenever $y\in\range(M)^{\bot}$;
and
\item
$M^{+}y = x$ whenever $x\in \ker(M)^{\bot}$, $y\in\range(M)$, and $Mx = y$ \citep[Sec.~6.7]{FIS02}.
\end{inparaenum}
Since a symmetric matrix $M\in\mxspace$ satisfies $\range(M) = \ker(M)^{\bot}$, we have the following well-known algebraic characterization of pseudoinverses:

\begin{lemma}
\label{lem:pseudo}
Let $M\in \mxspace$ be a symmetric matrix.
Then, $M^{+}$ is the unique matrix that
\begin{inparaenum}%
[\textup(\itshape i\textup)]
\item
inverts $M$ on $\ker(M)^{\bot} = \range(M)$;
and
\item
satisfies $\ker(M^{+}) = \ker(M)$.
\end{inparaenum}
\end{lemma}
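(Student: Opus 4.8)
The plan is to read off both asserted properties of $M^{+}$, together with its uniqueness, directly from the defining clauses (i)--(ii) of the pseudoinverse recalled above, using nothing beyond the orthogonal decomposition $\R^{n} = \ker(M) \oplus \ker(M)^{\bot}$ and the symmetry identity $\range(M) = \ker(M)^{\bot}$ (equivalently, $\range(M)^{\bot} = \ker(M)$, since $\ker(M^{\ttop}) = \range(M)^{\bot}$ always holds and $M = M^{\ttop}$).

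First I would record the single structural fact that the argument rests on: because $M$ annihilates $\ker(M)$, is injective on $\ker(M)^{\bot}$, and satisfies $M(\ker(M)^{\bot}) = M(\R^{n}) = \range(M)$, the restriction $M|_{\ker(M)^{\bot}} \colon \ker(M)^{\bot} \to \range(M)$ is a linear bijection. Feeding in $\range(M) = \ker(M)^{\bot}$, this yields that every $z \in \R^{n}$ decomposes uniquely as $z = Mx + w$ with $x \in \ker(M)^{\bot}$ and $w \in \range(M)^{\bot} = \ker(M)$. With this decomposition at hand, property (i) of the lemma is merely a rephrasing of clause (ii) of the definition: for $x \in \ker(M)^{\bot}$ and $y := Mx \in \range(M)$ one has $M^{+}y = x$, i.e. $M^{+}$ inverts $M$ on $\ker(M)^{\bot} = \range(M)$. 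For property (ii), the inclusion $\ker(M) \subseteq \ker(M^{+})$ is exactly clause (i) of the definition combined with $\range(M)^{\bot} = \ker(M)$; for the reverse inclusion one takes $z \in \ker(M^{+})$, writes $z = Mx + w$ as above, and computes $0 = M^{+}z = M^{+}(Mx) + M^{+}w = x + 0$, forcing $x = 0$ and hence $z = w \in \ker(M)$. Uniqueness then follows in one line: if $N \in \R^{n\times n}$ satisfies (i) and (ii) and $z = Mx + w$ is the decomposition above, then $Nz = N(Mx) + Nw = x + 0 = M^{+}z$, so $N = M^{+}$.

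I do not expect a genuine obstacle here; this is a standard linear-algebra fact and the proof is essentially bookkeeping with the orthogonal splitting $\R^{n} = \ker(M) \oplus \ker(M)^{\bot}$. The one step meriting a sentence of care is the bijectivity of $M|_{\ker(M)^{\bot}}$ onto $\range(M)$ --- this is precisely the place where symmetry is used, entering through the identification $\range(M) = \ker(M)^{\bot}$ --- after which translating between the ``$\range(M)^{\bot}$'' language of the definition and the ``$\ker(M)$'' language of the lemma is automatic.
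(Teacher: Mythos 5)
Your proof is correct and is essentially the argument the paper has in mind: the lemma is stated there as a well-known consequence of the defining clauses of $M^{+}$ once symmetry gives $\range(M)=\ker(M)^{\bot}$, and your orthogonal-decomposition bookkeeping ($z = Mx + w$ with $x\in\ker(M)^{\bot}$, $w\in\range(M)^{\bot}=\ker(M)$) is exactly the verification the paper leaves implicit. One small wording point: the bijectivity of $M|_{\ker(M)^{\bot}}$ onto $\range(M)$ holds for every matrix, so symmetry is not needed there; it enters only through the identification $\range(M)^{\bot}=\ker(M)$ (equivalently $\range(M)=\ker(M)^{\bot}$), which is where your translation between the definition's clauses and the lemma's conditions actually uses it.
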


\subsection{Interior equivalence of Riemannian dynamics and Hopkins' dynamics}
\label{sec:Hopkins}

We now derive the equivalence between \eqref{eq:RGD} and \citeauthor{Hop99b}' dynamics \eqref{eq:Hopkins} on $\intstrat$, as noted in \cref{sec:previous}.
To begin with, we say that $\Hop\in\mxspace$ is a \emph{Hopkins matrix} if it is positive definite with respect to $\zvecspace$ and maps $\onecol$ to $0$.
The following lemma establishes a basic characterization of Hopkins matrices:

\begin{lemma}
\label{lem:HBoth}
\leavevmode
\begin{enumerate}
[\textup(i\textup)]
\item
If $\SPD\in\mxspace$ is symmetric positive-definite, then $(\zproj\SPD\zproj)^{+}$ is a Hopkins matrix and
\begin{equation}
\label{eq:H1}
(\zproj\SPD\zproj)^{+}
	= \SPD^{-1} - \frac{\SPD^{-1}\onecol\onecol^{\ttop}\SPD^{-1}}{\onecol^{\ttop}\SPD^{-1}\onecol}.
\end{equation}
\item 
Conversely, if $\Hop$ is a Hopkins matrix, then $\Hop = (\zproj\SPD\zproj)^+$, where $\SPD = \Hop +\onecol\onecol^{\ttop}$ is symmetric positive definite.
\end{enumerate}
\end{lemma}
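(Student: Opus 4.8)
The plan is to route both parts through the Moore\textendash Penrose characterization of Lemma \ref{lem:pseudo}, since each is at bottom a statement about the pseudoinverse of a compressed matrix $\zproj\SPD\zproj$.

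For part~(i), I would first record that $\zproj\SPD\zproj$ is symmetric and that $\ker(\zproj\SPD\zproj)=\vspan(\onecol)$: the inclusion $\onecol\in\ker(\zproj\SPD\zproj)$ is immediate from $\zproj\onecol=0$, and conversely $\zproj\SPD\zproj v=0$ gives $(\zproj v)^{\ttop}\SPD(\zproj v)=v^{\ttop}\zproj\SPD\zproj v=0$, whence $\zproj v=0$ by positive-definiteness of $\SPD$, i.e.\ $v\in\vspan(\onecol)$; consequently $\range(\zproj\SPD\zproj)=\vspan(\onecol)^{\bot}=\zvecspace$. Next I would let $N$ be the right-hand side of \eqref{eq:H1} and verify the two defining clauses of Lemma \ref{lem:pseudo}: (a) $N$ is symmetric and $\ker(N)=\vspan(\onecol)$ --- indeed $N\onecol=0$ (the coefficient of the rank-one term is $1$), while $Nv=0$ forces $\SPD^{-1}v\in\vspan(\SPD^{-1}\onecol)$, hence $v\in\vspan(\onecol)$; (b) $N$ inverts $\zproj\SPD\zproj$ on $\zvecspace=\range(\zproj\SPD\zproj)$, since for $z\in\zvecspace$ one has $\zproj\SPD\zproj z=\zproj\SPD z$ and, using $N\onecol=0$, $N(\zproj\SPD z)=N\SPD z=z-\tfrac{\onecol^{\ttop}z}{\onecol^{\ttop}\SPD^{-1}\onecol}\,\SPD^{-1}\onecol=z$ because $\onecol^{\ttop}z=0$. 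By Lemma \ref{lem:pseudo} this yields $N=(\zproj\SPD\zproj)^{+}$, which is precisely \eqref{eq:H1}. Finally, to see that $(\zproj\SPD\zproj)^{+}$ is a Hopkins matrix: it is symmetric, it annihilates $\onecol$ (its kernel equals $\ker(\zproj\SPD\zproj)=\vspan(\onecol)$), and restricted to $\zvecspace$ it is the inverse of the positive-definite bilinear form $z\mapsto z^{\ttop}\SPD z$, hence positive-definite on $\zvecspace$.

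For part~(ii), I would first check that $\SPD=\Hop+\onecol\onecol^{\ttop}$ is symmetric positive-definite: using the orthogonal splitting $v=\zproj v+\tfrac1n(\onecol^{\ttop}v)\,\onecol$ together with $\Hop\onecol=0$, one gets $v^{\ttop}\SPD v=(\zproj v)^{\ttop}\Hop(\zproj v)+(\onecol^{\ttop}v)^{2}$, which is strictly positive for $v\neq0$ by positive-definiteness of $\Hop$ on $\zvecspace$. In particular $\SPD$ is invertible, so part~(i) applies and produces an explicit expression for $(\zproj\SPD\zproj)^{+}$; it then remains to reconcile that expression with $\Hop$. The efficient route is to note that the compression $\zproj\SPD\zproj$ collapses to a simple matrix --- the rank-one part $\onecol\onecol^{\ttop}$ drops out because $\zproj\onecol=0$, while $\Hop$ is already fixed by $\zproj$ on both sides since it kills $\onecol$ --- and then to invoke the involution identity $(X^{+})^{+}=X$ for the Moore\textendash Penrose inverse, together with $\Hop\Hop^{+}=\Hop^{+}\Hop=\zproj$ (valid because $\Hop$ is symmetric with kernel exactly $\vspan(\onecol)$, hence range $\zvecspace$; the kernel computation uses positive-definiteness of $\Hop$ on $\zvecspace$).

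The step I expect to be the main obstacle is part~(i): one must verify \emph{both} conditions of Lemma \ref{lem:pseudo} --- inversion on the correct subspace $\range(\zproj\SPD\zproj)=\zvecspace$ \emph{and} the kernel condition $\ker(N)=\vspan(\onecol)$ --- since neither clause alone characterizes the pseudoinverse, and it is easy (and fatal) to work on all of $\vecspace$ instead of on $\zvecspace$. Once \eqref{eq:H1} and the Hopkins property of $(\zproj\SPD\zproj)^{+}$ are established, part~(ii) is little more than the positive-definiteness check for $\SPD$ plus a one-line pseudoinverse manipulation.
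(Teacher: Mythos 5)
Your part~(i) is correct and is essentially the paper's own argument: you verify both clauses of Lemma \ref{lem:pseudo} for the right-hand side of \eqref{eq:H1} (inversion on $\range(\zproj\SPD\zproj)=\zvecspace$ together with the kernel condition), and your identification of $\ker(\zproj\SPD\zproj)=\vspan(\onecol)$ and of the Hopkins property of the pseudoinverse is sound; if anything, you are more explicit than the paper about the kernel check.

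Part~(ii) has a genuine gap at the final step. Your computation $\zproj\SPD\zproj=\Hop$ for $\SPD=\Hop+\onecol\onecol^{\ttop}$ is correct, but it reduces the claim to $(\zproj\SPD\zproj)^{+}=\Hop^{+}=\Hop$, and the identities you invoke --- $(X^{+})^{+}=X$ and $\Hop\Hop^{+}=\Hop^{+}\Hop=\zproj$ --- do not deliver this; indeed it fails in general. For instance, $\Hop=2\zproj$ is a Hopkins matrix, yet $\bigl(\zproj(\Hop+\onecol\onecol^{\ttop})\zproj\bigr)^{+}=(2\zproj)^{+}=\tfrac12\zproj\neq\Hop$. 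So no ``one-line pseudoinverse manipulation'' can close the argument with this choice of $\SPD$. What your observation actually shows is that the representing matrix should be $\SPD=\Hop^{+}+\onecol\onecol^{\ttop}$ (equivalently $(\Hop+\onecol\onecol^{\ttop})^{-1}$, which differs only in the coefficient of the rank-one part that the compression kills): $\Hop^{+}$ is itself a Hopkins matrix (it is symmetric, $\Hop^{+}\onecol=0$ because $\ker\Hop^{+}=\ker\Hop$, and it is positive-definite on $\zvecspace$ since it inverts $\Hop$ there), so your splitting argument gives positive-definiteness of $\SPD$, the compression is $\zproj\SPD\zproj=\Hop^{+}$, and the involution $(\Hop^{+})^{+}=\Hop$ then yields $(\zproj\SPD\zproj)^{+}=\Hop$ --- exactly the representation needed for the implication from \eqref{eq:HopkinsMat} to \eqref{eq:RGDMP} in Proposition \ref{prop:HopDRD}. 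For comparison, the paper's proof of part~(ii) substitutes $\SPD=\Hop+\onecol\onecol^{\ttop}$ into the right-hand side of \eqref{eq:H1} and asserts that the simplification returns $\Hop$; carrying that computation out via $(\Hop+\onecol\onecol^{\ttop})^{-1}=\Hop^{+}+n^{-2}\onecol\onecol^{\ttop}$ in fact returns $\Hop^{+}$, so the emendation just described is required on either route --- your shortcut has the merit of making this visible immediately.
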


\begin{proof}
To prove part (i), let $\SPD$ be symmetric positive-definite.
Then the symmetric matrix $\zproj \SPD \zproj $ is positive definite with respect to $\zvecspace$ and maps $\onecol$ to 0, so $\range(\zproj \SPD \zproj)=\ker(\zproj \SPD \zproj)^{\bot}= \zvecspace$.
If we denote the right-hand side of \eqref{eq:H1} by $\bar\SPD$, a straightforward calculation shows that $\bar\SPD \zproj \SPD \zproj z = z$ for all $z \in \zvecspace$ and $\bar\SPD\onecol = 0$.
Thus \cref{lem:pseudo} implies that $\bar\SPD=(\zproj\SPD\zproj)^+$.
That this is a Hopkins matrix is immediate from the fact that $\range(\zproj \SPD \zproj)=\ker(\zproj \SPD \zproj)^\bot= \zvecspace$ and \cref{lem:pseudo}.

To prove part (ii), let $\Hop$ be a Hopkins matrix and let $\SPD = \Hop +\onecol\onecol^{\ttop}$.
Clearly $\SPD$ is symmetric positive-definite.
Moreover, writing out $(\zproj\SPD\zproj)^+$ using the right-hand side of \eqref{eq:H1} and simplifying the result yields $(\zproj\SPD\zproj)^+=\Hop$.
\end{proof}

\cref{prop:HopDRD} below establishes the equivalence between \eqref{eq:Hopkins} and \eqref{eq:RGD} on $\intstrat$, and provides a concise third representation for both dynamics.  Observe that \eqref{eq:RGDNewMat} is expression \eqref{eq:RGD-coords2} for \eqref{eq:RGD}  on $\intstrat$, but written in matrix form.

\begin{proposition}
\label{prop:HopDRD}
Let $\dot x = \dynfield(x)$ be a dynamical system on $\intstrat$.
Then, the following are equivalent:
\begin{subequations}
\begin{enumerate}
[\textup(i\textup)]
\item
There is a smooth field of Hopkins matrices $\HopFcn\from\intstrat\to\mxspacea$ such that
\begin{alignat}{2}
\label{eq:HopkinsMat}
\dynfield(x)
	&= \HopFcn(x)\,\payv(x)^{\ttop}
	&\quad
	&\text{for all $x\in\intstrat$}.
\intertext{\item There is a smooth field of symmetric positive-definite matrices $\hessmat\from\intstrat\to\mxspacea$ such that}
\label{eq:RGDMP}
\dynfield(x)
	&= (\zproj \hessmat(x) \zproj)^{+} \payv(x)^{\ttop}
	&\quad
	&\text{for all $x\in\intstrat$}.
\intertext{\item There is a smooth Riemannian metric $g$ on $\orthant$ such that}
\label{eq:RGDNewMat}
\dynfield(x)
	&= \left(g^{-1}(x) - \frac{g^{-1}(x)\onecol\onecol^{\ttop} g^{-1}(x)}{\onecol^{\ttop} g^{-1}(x) \onecol}\right)\payv(x)^\ttop
	&\quad
	&\text{for all $x\in\intstrat$}.
\end{alignat}
\end{enumerate}
\end{subequations}
\end{proposition}

\begin{proof}
The equivalence of (i) and (ii) follows from \cref{lem:HBoth}, and
the equivalence of   (ii) and (iii) follows from \cref{lem:HBoth}(i) with $\SPD=\hessmat(x)=g(x)$.%
\footnote{To formally complete the argument that (ii) implies (iii), we observe without proof that the field $\SPD$ on $\intstrat$ can be smoothly extended to $\orthant$.}
\end{proof}

\subsection{Continuous Hessian dynamics and reinforcement learning}
\label{sec:HDandRL}

We now complete the common derivation of \eqref{eq:HD} and \eqref{eq:RLD} on $\intstrat$ initiated in \cref{sec:CDHDRLD}.
\cref{prop:HopDRD} above shows that the continuous Hessian dynamics \eqref{eq:HD} can be expressed as
\begin{equation}
\label{eq:NewDyn}
\dot x
	= (\zproj \hessmat(x) \zproj)^{+} \payv(x)^{\ttop}.
\end{equation}
We now extend the argument from \cref{sec:CDHDRLD} to show that the reinforcement learning dynamics \eqref{eq:RLD} also take this form.

To that end, let $\Undx = \{x - \frac1{n}\onecol \from x \in \intstrat\}\subset \zspace^{n}$, and define $\hh \from \Undx\to\R$ as
\begin{equation}
\label{eq:hh}
\hh(\undx)
	= h(\undx+ \tfrac{1}{n}\onecol).
\end{equation}
Since $\intUndx$ is open relative to $\zspace^{n}$, the derivative $D\hh$ of $\hh$ at $\undx \in \intUndx$ can be represented by a covector $D\hh(\undx)$ in  $(\zspace^{n})^\ast$.  Imposing the Euclidean metric on $\zspace^{n}$ for convenience, we can identify $(\zspace^{n})^\ast$ with the set of covectors whose components sum to zero, and write the derivative and Hessian of $\hh$ at $\undx \in \intUndx$ as
\begin{flalign}
\label{eq:dhh}
D\hh(\undx)
	&=  Dh(\undx+ \tfrac1{n}\onecol)\zproj,
	\\
\label{eq:Hesshh}
\hess \hh(\undx)
	&= \zproj H(\undx+ \tfrac1{n}\onecol)\zproj.
\end{flalign}
The convex conjugate $\hh^\ast $ of $\hh$ is then defined as
\begin{equation}
\label{eq:hhast}
\hh^\ast(y_0) = \max_{x \in\Undx} \{\braket{y}{\undx} - \hh(\undx)\},
	\quad
	y_0\in (\zspace^{n})^\ast ,
\end{equation}
where the fact that the domain of $\hh^\ast$ is all of $(\zspace^{n})^\ast$ follows from the compactness of $\Undx$.
By the basics of convex conjugation \citep[Chapter 26]{Roc70}, the maps $D\hh^\ast \from(\zspace^{n})^\ast\to\Undx $ and $D\hh \from\Undx \to (\zspace^{n})^\ast $ are inverse to one another, so we have
\begin{equation}
\label{eq:bothHess}
\hess \hh^\ast(y_0)
	=  (\hess \hh(d\hh^\ast(y_0)) )^{-1},
\end{equation}
with both sides of the equality in \eqref{eq:bothHess} understood as linear maps from $\zspace^{n}$ to itself.

Now recall that the (symmetric) reinforcement learning process is defined by
\begin{equation}\tag{\ref{eq:RL}}
\begin{aligned}
\dot y
	&= \payv(x)
	\\
x
	&= \choice(y).
\end{aligned}
\end{equation}
The choice map $\choice$, defined  in \eqref{eq:choice} in terms of $h$, satisfies $\choice(y) = \choice(y\zproj)$ for all $y \in \R^n$.
Thus, applying definition \eqref{eq:hh}, we can express  $\choice$ in terms of $\hh$ as
\begin{equation}
\label{eq:Choicehh}
\choice(y)
	= \argmax_{x \in \strat} \{y\zproj x - h(x) \} 
	= \argmax_{\undx \in \Undx} \{y\zproj \undx - \hh(\undx) \} + \tfrac{1}{n}\onecol
	= D\hh^{\ast}(y\zproj) + \tfrac{1}{n}\onecol.
\end{equation}
Hence, substituting \eqref{eq:Choicehh} into \eqref{eq:RL}, differentiating, and using \cref{eq:bothHess,eq:Choicehh,eq:RL,eq:Hesshh} yields
\begin{flalign}
\dot x
	&= \hess \hh^\ast(\payv(x)\zproj) \,\zproj \payv(x)^\ttop
	= (\hess \hh (D\hh^\ast(\payv(x)\zproj)))^{-1} \,\zproj \payv(x)^\ttop
	\notag\\
	&= (\hess \hh (x - \tfrac1{n}\onecol))^{-1}\,\zproj \payv(x)^\ttop
	= (\zproj H(x)\zproj)^+\payv(x)^\ttop,
\end{flalign}
as specified in \eqref{eq:NewDyn}.

\section{Extensions of Riemannian metrics}
\label{app:geometry}

Here we present some technical results concerning the extension of Riemannian metrics from $\orthant$ to $\clorthant$.  \cref{prop:extension} shows that an extendable metric $g$ on $\clorthant$ induces a well-defined scalar product at all points of $\clorthant$:

\begin{proposition}
\label{prop:extension}
Let $g$ be an extendable Riemannian metric on $\clorthant$.
Then, for all $x\in\clorthant$, there exists a unique scalar product $\product{\argdot}{\argdot}_{x}$ on $\domg(x) $ such that $\product{w}{w'}_{x_{k}} \to \product{w}{w'}_{x}$ for all $w,w'\in\domg(x)$ and for every interior sequence $x_{k}\to x$.
Moreover, if $g$ is minimal-rank extendable, we have $g_{\pure\purealt}^{\sharp}(x) = 0$ whenever $\pure,\purealt\notin\supp(x)$.
\end{proposition}

\begin{proof}
Fix some $x\in\clorthant$ and write $g^{\sharp}(x) = Q^{\top}\Lambda Q$ where the diagonal matrix $\Lambda$ consists of the eigenvalues of $g^{\sharp}(x)$ and $Q$ is an orthogonal matrix ($Q^{\top} = Q^{-1}$) whose columns are the eigenvectors of $g^{\sharp}(x)$.
Since $g^{\sharp}(x)$ is positive-semidefinite, its eigenvalues are nonnegative.
Furthermore, since $\domg(x) = \im g^{\sharp}(x)$, every eigenvector of a nonzero eigenvalue of $g^{\sharp}(x)$ must lie in $\domg(x)$:
indeed, if $g^{\sharp}(x) z = \lambda z$ for some $\lambda>0$, we will also have $z = g^{\sharp}(x)z\lambda^{-1}$, i.e. $z\in\im g^{\sharp}(x) = \domg (x)$.
As a result, we may write $g^{\sharp}(x) = \sum_{\lambda>0} \lambda\, \eigvec_{\lambda} \eigvec_{\lambda}^{\top}$ where the summation is taken over all positive eigenvalues $\lambda>0$ of $g^{\sharp}(x)$ (assumed for convenience to be distinct) and $\eigvec_{\lambda}$ is the corresponding column of $Q$.
The metric tensor of the induced scalar product $\product{\argdot}{\argdot}_{x}$ at $x$ is then defined as the pseudoinverse $( g^{\sharp}(x) )^{+} $ of $g(x)$ (see \cref{app:dynamics}), given here by
\begin{equation}
\label{eq:product-eig}
( g^{\sharp}(x) )^{+} 
= \insum_{\lambda>0} \lambda^{-1} \eigvec_{\lambda} \eigvec_{\lambda}^{\top}.
\end{equation}
Our continuity and uniqueness claims are then immediate.

Finally, to show that $g_{\pure\purealt}^{\sharp}(x) = 0$ if $\pure\notin\supp(x)$ and $g$ is minimal-rank extendable, simply note that $g_{\pure\purealt}^{\sharp}(x) = \bvec_{\pure}^{\top} g^{\sharp}(x) \bvec_{\purealt} = \insum_{\lambda} \lambda\, \bvec_{\pure}^{\top} \eigvec_{\lambda} \eigvec_{\lambda}^{\top} \bvec_{\purealt} = 0$ because all eigenvectors of $g^{\sharp}(x)$ with positive eigenvalues lie in $\R^{\supp(x)} = \domg(x)$.
\end{proof}

\begin{remark}
\label{rem:gsharpBdMin}
In the minimal-rank case, the scalar product $\product{\argdot}{\argdot}_{x}$ on $\domg(x)$ can be represented by the matrix $g(x) = (g^{\sharp}(x))^+$.
This means that the submatrix $(g_{\pure\purealt}(x))_{\pure,\purealt \in \supp(x)}$ is the inverse of the submatrix $(g_{\pure\purealt}^{\sharp}(x))_{\pure,\purealt \in \supp(x)}$ while the remaining components (which have no geometric significance) are set to $0$.
\end{remark}

Next we establish the correctness of the formulas \eqref{eq:RGD-coords} for dynamics generated by a minimal-rank extendable metrics.

\begin{proposition}\label{prop:RGDMinFormula}
The formulas \eqref{eq:RGD-coords}
describe \eqref{eq:RGD} generated by a minimal-rank extendable metric $g$ at all states $x \in\strat$;
moreover, the sums in \eqref{eq:RGD-coords} need only be taken over $\purealt\in\supp(x)$.
\end{proposition}

\begin{proof}
The case $x \in \intstrat$ is covered in the text.
Otherwise, if $x \in \bd(\strat)$,
the corresponding $g$-admissible set is
\begin{equation}
\label{eq:ProjDomBdMin}
\metcone(x)
	= \tcone_{\strat}(x) \cap \tspace_{\clorthant}(x)
	 = \zspace^{\pures} \cap \R^{\supp(x)}= \tspace_{\strat}(x).
\end{equation}
By definition, $\payv^\sharp(x)$ and $\normal(x)$ lie in $\im g^{\sharp}(x) =\R^{\supp(x)}$.
Moreover, $\normal(x)$ is normal to $\tspace_{\strat}(x)=\zspace^{\pures} \cap \R^{\supp(x)}$ because
\begin{equation}
\label{eq:CheckNormal2}
\product{\normal(x)}{z}_{x}
	= \onerow\hspace{1pt} g^{\sharp}(x)\hspace{1pt} g(x) z
	= \braket{\onerow}{z}
	= 0
	\quad
	\text{for all $z\in\zspace^{\pures}$},
\end{equation}
where the second equality follows from \cref{rem:gsharpBdMin}.

Eq.~\eqref{eq:GeoRGDComp} shows that the \acl{RHS} of \eqref{eq:RGD} is equal to $\tproj_{x}(\payv^{\sharp}(x))$.
In light of the facts above, \eqref{eq:tproj2} shows that $\tproj_{x}(\payv^{\sharp}(x))$ is equal to the \acl{RHS} of \eqref{eq:RGD-coords1}, the novelty being that $\payv^\sharp_\pure(x)$ and $\normal_{\pure}(x)$ vanish whenever $\pure\notin\supp(x)$.
These claims prove the first statement in the proposition.
To establish the second statement, simply observe that $g_{\pure\purealt}^{\sharp}(x) = 0$ whenever $\pure$ or $\purealt$ is not in $\supp(x)$ by \cref{prop:extension}, so including $\purealt\notin\supp(x)$ in the sums in \eqref{eq:RGD-coords} is irrelevant.
\end{proof}

\section{Convergence and stability in dynamical systems}
\label{app:stability}

\newcommand{\rest}{\mathrm{RP}}
\newcommand{\ball}{B}

\subsection{Definitions}

Throughout this appendix, we focus on the dynamics
\begin{equation}
\tag{\ref*{eq:ED}}
\dot x
	= \dynfield(x),
	\quad
	x\in\strat,
\end{equation}
and we assume that they admit unique solutions from every initial condition.
With this in mind, we say that $\eq$ is an $\omega$-\emph{limit point} of the solution orbit $x(t)$ if there is an increasing sequence of times $t_{n}\uparrow\infty$ such that $x(t_{n}) \to \eq$.
We further say that $\eq$ is \emph{Lyapunov stable} if, for every neighborhood $U$ of $\eq$, there exists a neighborhood $U'$ of $\eq$ such that every solution orbit $x(t)$ that starts in $U'$ is contained in $U$ for all $t\geq0$.
Finally, we say that
$\eq$ is \emph{attracting} if there is a neighborhood $U$ of $\eq$ such that every solution that starts
in $U$ converges to $\eq$,
and
$\eq$ is called \emph{asymptotically stable} if it is Lyapunov stable and attracting.
In this case, the maximal (relatively) open set of states from which solutions converge to $\eq$ is called the \emph{basin} of $\eq$;
if the basin of $\eq$ is all of $\strat$, we say that $\eq$ is \emph{globally asymptotically stable}.

\subsection{A global convergence result}

A standard result from dynamical systems states that if a smooth dynamical system on a compact set admits a strict global Lyapunov function, all $\omega$-limit points are rest points \citep[see e.g.][Theorem 7.B.3]{San10}.  
The proof of this result relies on the continuity of solutions on initial conditions, a property which is not easily established for discontinuous dynamics.
In \cref{prop:omega} below, we present a global convergence result that does not require continuity of solutions in initial conditions, but instead relies on a \ac{lsc} lower bound on the derivative of the Lyapunov function.
To state it, let
\begin{equation}
\rest	
	= \setdef{x\in\strat}{\dynfield(x) = 0}
\end{equation}
denote the set of rest points of the dynamics \eqref{eq:ED}.
We then have:

\begin{proposition}
\label{prop:omega}
Let $x(t)$ be an absolutely continuous solution orbit of \eqref{eq:ED} and let $\Gamma_{+} = x(\R_{+})$ denote the set of points visited by $x(t)$.
Assume further that $\rest$ is closed and there exist functions $\Lyap\from\strat\to\R$ and $\rate\from\strat\to\R_{+}$ such that
\begin{enumerate}
[\textup(i\textup)]
\item
\label{itm:omega-smooth}
$\Lyap$ is differentiable in a neighborhood of $\Gamma_{+}$.
\item
\label{itm:omega-rest}
$\rate$ is \acl{lsc} and $\rate(x) = 0$ if and only if $x \in \rest$.
\item
\label{itm:omega-Lyap}
$\braket{D\Lyap(x)}{\dynfield(x)} \geq \rate(x)$ for all $x\in \Gamma_{+}$.

\end{enumerate}
Then, $x(t)$ converges to $\rest$.
\end{proposition}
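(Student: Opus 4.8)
The plan is to integrate the Lyapunov inequality (iii) along the orbit, deduce that $\int_0^\infty\rate(x(t))\,dt<\infty$, and then argue by contradiction: I will use the lower semicontinuity of $\rate$ to obtain a uniform positive lower bound for $\rate$ on any region bounded away from $\rest$, and a bound on the speed of the orbit to turn ``the orbit revisits that region infinitely often'' into ``$\int_0^\infty\rate(x(t))\,dt=\infty$''.

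\emph{Step 1 (integrate (iii)).} Since $x(\cdot)$ is absolutely continuous and $\Lyap$ is differentiable on a neighbourhood of $\Gamma_{+}$, the composition $t\mapsto\Lyap(x(t))$ is absolutely continuous and the chain rule gives $\tfrac{d}{dt}\Lyap(x(t))=\braket{d\Lyap(x(t))}{\dot x(t)}=\braket{d\Lyap(x(t))}{\dynfield(x(t))}$ for a.e.\ $t$; by (iii) and (ii) this is $\geq\rate(x(t))\geq0$. Hence $\Lyap(x(t))$ is nondecreasing, and since $\Lyap$ is continuous while $\Gamma_{+}$ has compact closure in $\strat$, $\Lyap$ is bounded along the orbit, so $\Lyap^{\ast}:=\lim_{t\to\infty}\Lyap(x(t))$ exists and is finite. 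Integrating on $[0,T]$ and letting $T\to\infty$ yields $\int_{0}^{\infty}\rate(x(t))\,dt\leq\Lyap^{\ast}-\Lyap(x(0))<\infty$.

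\emph{Step 2 (contradiction).} Suppose $x(t)$ does not converge to $\rest$: then there are $\eps>0$ and times $t_{n}\nearrow\infty$ with $\dist(x(t_{n}),\rest)\geq\eps$. The set $K:=\setdef{x\in\strat}{\dist(x,\rest)\geq\eps/2}$ is closed (here we use that $\rest$ is closed), hence compact, and disjoint from $\rest$, so $\rate>0$ on $K$. As $\rate$ is lower semicontinuous it attains its infimum on the compact set $K$; call this infimum $m$, so $m>0$ and $\rate(x)\geq m$ for all $x\in K$. Writing $M:=\sup_{x\in\strat}\norm{\dynfield(x)}<\infty$, the orbit is $M$-Lipschitz, so with $\tau:=\eps/(2M)$ and $t\in[t_{n},t_{n}+\tau]$ we get $\norm{x(t)-x(t_{n})}\leq M\tau=\eps/2$, whence $x(t)\in K$. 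Passing to a subsequence so that the intervals $[t_{n},t_{n}+\tau]$ are pairwise disjoint (possible since $t_{n}\nearrow\infty$), we obtain $\int_{0}^{\infty}\rate(x(t))\,dt\geq\sum_{n}\int_{t_{n}}^{t_{n}+\tau}\rate(x(t))\,dt\geq\sum_{n}m\tau=\infty$, contradicting Step 1. Therefore $x(t)\to\rest$.

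\emph{Main obstacle.} The crux is the uniform bound $M=\sup_{\strat}\norm{\dynfield}<\infty$ used in Step 2 — equivalently, Lipschitz continuity of the orbit. Some such regularity is genuinely needed beyond mere absolute continuity of $x(\cdot)$: without it an absolutely continuous orbit could spike into the bad set $K$ only on a time-set of arbitrarily small measure near each $t_{n}$, keeping $\int_{0}^{\infty}\rate(x(t))\,dt$ finite. In the setting of this paper this causes no trouble, since $\dynfield$ is bounded on the compact state space $\strat$ (the payoffs are Lipschitz and the metric is $C^{1}$) — precisely the bound $\dynfield_{\max}<\infty$ already invoked in the proof of Proposition~\ref{prop:potential} — so one may take $M=\dynfield_{\max}$. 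A minor technical point in Step 1 is the a.e.\ validity of the chain rule for $\Lyap\circ x$ when $\Lyap$ is only assumed differentiable; this holds because $x(\cdot)$ is differentiable a.e.\ with $\dot x\in L^{1}$, and at the a.e.\ points where $\dot x$ vanishes both sides of the identity vanish.
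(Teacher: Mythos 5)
Your argument is correct and rests on exactly the same two quantitative ingredients as the paper's proof \textendash\ lower semicontinuity giving a uniform positive lower bound for $\rate$ on a compact set bounded away from $\rest$, and the speed bound $\sup_{x\in\strat}\norm{\dynfield(x)}<\infty$ giving a minimum sojourn time near each ``bad'' time $t_{n}$ \textendash\ but it packages them differently. You first extract $\int_{0}^{\infty}\rate(x(t))\,dt<\infty$ from the integrated Lyapunov inequality and then contradict this finiteness directly, working with $\dist(x(t),\rest)$ and the single compact set $K=\setdef{x}{\dist(x,\rest)\geq\eps/2}$, on which the lsc function $\rate$ attains a positive minimum. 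The paper instead argues through $\omega$-limit points: it supposes some $\omega$-limit point is not a rest point, uses lsc to find a compact neighborhood on which $\rate\geq a>0$, and then splits into two cases (the orbit eventually stays in that neighborhood, or it enters and leaves it infinitely often, with each crossing lasting at least $\dist(\cl(U),K\setminus U')/\dynfield_{\max}$), deriving $\Lyap(x(t))\to\infty$ in either case. Your route avoids the case split and the $\omega$-limit formalism, at the cost of explicitly needing $\lim_{t\to\infty}\Lyap(x(t))$ finite in Step 1; the paper needs the same implicit boundedness of $\Lyap$ along the orbit to call $\Lyap(x(t))\to\infty$ a contradiction, so nothing is lost. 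Your closing observation about the speed bound is apt: the paper likewise invokes $\dynfield_{\max}<\infty$ inside the proof without listing it as a hypothesis, and in the intended application to \eqref{eq:RGD} on the compact state space it holds automatically. (One small quibble: closedness of $K$ does not actually use closedness of $\rest$, since $\dist(\argdot,\rest)$ is continuous for any nonempty set; this does not affect the argument.)
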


\begin{proof}
By absolute continuity and Conditions \eqref{itm:omega-smooth} and \eqref{itm:omega-Lyap} above, we get
\begin{equation}
\label{eq:Lyapunov-int}
\Lyap(x(t)) - \Lyap(x(0))
	= \int_{0}^{t} \braket{D\Lyap(x(s))}{\dynfield(x(s))} \dd s
	\geq \int_{0}^{t} \rate(x(s)) \dd s
	\geq 0,
\end{equation}
i.e. $\Lyap$ is nondecreasing along $x(t)$.
Furthermore, since $\strat$ is compact, $x(t)$ admits at least one $\omega$-limit point $\eq\in\cl(\Gamma_{+}) \subseteq \strat$.
Assume now that $x(t)$ admits an $\omega$-limit point $\olimit$ such that $\dynfield(\olimit) \neq 0$.
Since $\rest$ is closed, Condition \eqref{itm:omega-rest} implies that there is a compact neighborhood $K$ of $\olimit$ and some $a>0$ such that $\rate(x) \geq a > 0$ for all $x\in K$.
With this in mind, we consider two complementary cases below:

\smallskip
\paragraph{\emph{Case 1}}
Suppose there exists some $T\geq 0$ such that $x(t) \in K$ for all $t \geq T$.
Then $\rate(x(t)) \geq a$ for all $t\geq T$, so \eqref{eq:Lyapunov-int} yields $\lim_{t\to\infty} \Lyap(x(t)) = \infty$, a contradiction.

\smallskip
\paragraph{\emph{Case 2}}
Assume instead that, for all $T \geq0$, we have $x(t) \notin K$ for some $t\geq T$.
In this case, there exist open neighborhoods $U$ and $U'$ of $\olimit$ with $\cl(U) \subseteq U' \subset K$,
and interlaced sequences $t_{n},t_{n}'\uparrow\infty$ such that, for all $n$:
\begin{inparaenum}%
[\textup(\itshape i\textup)]
\item
$t_{n} < t_{n}' < t_{n+1}$;
\item
$x(t_{n}) \in U$, $x(t_{n}') \in K\setminus U'$;
and
\item
$x(t)\in K$ whenever $t\in[t_{n},t_{n}']$.
\end{inparaenum}
Then, since $\abs{\dot x_{\pure}(t)}$ is bounded from above by $\dynfield_{\max} \equiv \sup_{x\in\strat} \max_{\purealt} \abs{\dynfield_{\purealt}(x)} < \infty$, the time intervals $\delta_{n} \equiv t_{n}' - t_{n}$ will be bounded from below by $\delta_{\min} \equiv \dist(\cl(U),K\setminus U') / \dynfield_{\max} > 0$.
We thus get
\begin{flalign}
\Lyap(x(t_{n}')) - \Lyap(x(0))
	\geq \int_{0}^{t_{n}'} \rate(x(s)) \dd s
	\geq \sum_{j=1}^{n} \int_{t_{j}}^{t_{j}'} \rate(x(s)) \dd s
	\geq  an\delta_{\min},
\end{flalign}
i.e. $\Lyap(x(t_{n}')) \to \infty$, a contradiction.
\end{proof}

To apply \cref{prop:omega} to discontinuous Riemannian dynamics in potential games, we need the following result:

\begin{lemma}
\label{lem:RLSC}
The speed of motion $\norm{\dynfield(x)}_{x}$ of the dynamics \eqref{eq:RGD} is \ac{lsc} on $\strat$.
\end{lemma}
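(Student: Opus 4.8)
The plan is to reduce everything to the identity, valid at every $x\in\strat$,
$\norm{\dynfield(x)}_{x}^{2}=\norm{\payv^{\sharp}(x)}_{x}^{2}-\dist_{x}\!\big(\payv^{\sharp}(x),\metcone(x)\big)^{2}$,
where $\dynfield(x)=\tproj_{x}(\payv^{\sharp}(x))$: this is just the Pythagorean theorem applied to the orthogonal splitting of $\payv^{\sharp}(x)$ into its $\tproj_{x}$-image and the complementary component (a linear-subspace decomposition in the minimal-rank case, and Moreau's decomposition over the cone $\tcone_{\strat}(x)=\metcone(x)$ in the full-rank case, cf.~\citep{HUL01}). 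Since $t\mapsto\sqrt{t}$ is continuous and nondecreasing on $[0,\infty)$, it is enough to show the right-hand side is lower semicontinuous, and for that I would show the first term is continuous and the second is upper semicontinuous.

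The first term is easy: $\norm{\payv^{\sharp}(x)}_{x}^{2}=\braket{\payv(x)}{\payv^{\sharp}(x)}=\payv(x)\,g^{\sharp}(x)\,\payv(x)^{\ttop}$, which is continuous on $\strat$ because $g^{\sharp}$ is $C^{1}$ on $\clorthant$ (extendability) and $\payv$ is Lipschitz. In the minimal-rank (continuous) case this already finishes things more directly: the coordinate formula \eqref{eq:RGD-coords2} holds throughout $\strat$, its denominator $\sum_{\gamma}\normal_{\gamma}(x)=\onerow g^{\sharp}(x)\onerow^{\ttop}$ is continuous and strictly positive, hence bounded below on the compact set $\strat$, so $\dynfield$ itself is continuous; combining this with the subspace orthogonality $\product{\payv^{\sharp}(x)-\dynfield(x)}{\dynfield(x)}_{x}=0$ gives $\norm{\dynfield(x)}_{x}^{2}=\braket{\payv(x)}{\dynfield(x)}=\sum_{\alpha}\payv_{\alpha}(x)\dynfield_{\alpha}(x)$, which is continuous and a fortiori lower semicontinuous.

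So the real work is the full-rank (discontinuous) case, where $\domg(x)=\R^{\act}$, $g(x)=(g^{\sharp}(x))^{-1}$ is $C^{1}$ (the inverse of an invertible $C^{1}$ matrix field), and I must show $d(x):=\dist_{x}(\payv^{\sharp}(x),\tcone_{\strat}(x))$ is upper semicontinuous. The key structural fact is a one-sided semicontinuity of the tangent cone: if $x_{k}\to x$ in $\strat$, then every coordinate $\alpha$ with $x_{\alpha}>0$ satisfies $x_{k,\alpha}>0$ for all large $k$, so the zero-set of $x_{k}$ is eventually contained in that of $x$, which yields the inclusion $\tcone_{\strat}(x)\subseteq\tcone_{\strat}(x_{k})$ for all large $k$ (moving toward the interior only deletes sign constraints). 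Granting this, fix $x$, pick a minimizer $z^{\ast}\in\tcone_{\strat}(x)$ of $z\mapsto\norm{\payv^{\sharp}(x)-z}_{x}$ (it exists, since $\tcone_{\strat}(x)$ is closed and nonempty and the norm is coercive), note $z^{\ast}\in\tcone_{\strat}(x_{k})$ for large $k$, so $d(x_{k})\le\norm{\payv^{\sharp}(x_{k})-z^{\ast}}_{x_{k}}=\big[(\payv^{\sharp}(x_{k})-z^{\ast})^{\ttop}g(x_{k})(\payv^{\sharp}(x_{k})-z^{\ast})\big]^{1/2}$; since $\payv^{\sharp}(x_{k})\to\payv^{\sharp}(x)$ and $g(x_{k})\to g(x)$ the right-hand side converges to $\norm{\payv^{\sharp}(x)-z^{\ast}}_{x}=d(x)$, giving $\limsup_{k}d(x_{k})\le d(x)$. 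Thus $d$ and $d^{2}$ are upper semicontinuous, $\norm{\dynfield(\cdot)}_{\cdot}^{2}$ is lower semicontinuous as a continuous function minus an upper semicontinuous one, and taking square roots completes the argument.

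I expect the only delicate point to be phrasing the tangent-cone inclusion cleanly — it is exactly the direction of set-valued semicontinuity that makes the distance function usc, and the opposite inclusion (which would be false) is never needed — together with keeping the minimal-rank and full-rank cases separate, so that the discontinuity of $g$ at the boundary in the minimal-rank case is never invoked (there one works only with $g^{\sharp}$ and the coordinate formula).
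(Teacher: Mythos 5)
Your proof is correct, and in the discontinuous case it takes a route that differs from the paper's in an instructive way. The paper characterizes the speed as a supremum, $\norm{\dynfield(x)}_{x} = \max\setdef{\product{\payv^{\sharp}(x)}{z}_{x}}{z\in\tcone_{\strat}(x)\cap\ball(x)}$ (obtained from Moreau's decomposition and Cauchy\textendash Schwarz), observes that the constraint correspondence $x\mapsto\tcone_{\strat}(x)\cap\ball(x)$ is lower semicontinuous because $\tcone_{\strat}(x)\subseteq\tcone_{\strat}(y)$ whenever $\supp(x)\subseteq\supp(y)$, and then invokes a precursor of Berge's maximum theorem to conclude lower semicontinuity of the value. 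You instead use the complementary half of Moreau's decomposition, writing $\norm{\dynfield(x)}_{x}^{2}=\norm{\payv^{\sharp}(x)}_{x}^{2}-\dist_{x}(\payv^{\sharp}(x),\tcone_{\strat}(x))^{2}$, and prove upper semicontinuity of the distance term by hand: the same support-monotonicity of the tangent cone gives $\tcone_{\strat}(x)\subseteq\tcone_{\strat}(x_{k})$ along any sequence $x_{k}\to x$, so the minimizer at $x$ is feasible at $x_{k}$, and continuity of $g^{\sharp}$ (hence of $g=(g^{\sharp})^{-1}$ in the full-rank case) does the rest. The two arguments rest on exactly the same geometric fact about tangent cones; what yours buys is self-containedness (no appeal to the maximum-theorem lemma, only a fixed feasible point and a limit), while the paper's buys brevity by outsourcing the semicontinuity bookkeeping to a standard correspondence result. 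Your treatment of the continuous case (coordinate formula \eqref{eq:RGD-coords2} with strictly positive denominator, plus the subspace orthogonality giving $\norm{\dynfield(x)}_{x}^{2}=\braket{\payv(x)}{\dynfield(x)}$) matches the paper's terse claim that the speed is in fact continuous there, and is careful to avoid the (genuinely discontinuous) tensor $g$ at the boundary.
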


\begin{proof}
If the dynamics \eqref{eq:RGD} are continuous, our claim follows immediately from the continuity of the underlying metric \textendash\ in fact, $\norm{\dynfield(x)}_{x}$ is continuous in this case.
Otherwise, if \eqref{eq:RGD} is discontinuous, recall that $\dynfield(x) \equiv \tproj_{x}(\payv^{\sharp}(x))$ is simply the projection of $\payv^{\sharp}(x)$ on the tangent cone $\tcone_{\strat}(x)$ to $\strat$ at $x$ (because $\metcone(x) = \tcone_{\strat}(x)$ in that case).
Therefore, if we write $\dynfield^{\perp}(x) = \payv^{\sharp}(x) - \dynfield(x)$ for the projection of $\payv^{\sharp}(x)$ on the normal cone $\ncone_{\strat}(x)$ to $\strat$ at $x$, Moreau's decomposition theorem and the Cauchy-Schwarz inequality yield
\begin{equation}
\product{\payv^{\sharp}(x)}{z}_{x}
	= \product{\dynfield(x) + \dynfield^{\perp}(x)}{z}_{x}
	= \product{\dynfield(x)}{z}_{x}
	\leq \norm{\dynfield(x)}_{x} \norm{z}_{x},
\end{equation}
for all $z\in\tcone_{\strat}(x)$,
with the inequality binding if and only if $z\propto\dynfield(x)$.
We thus obtain the characterization
\begin{equation}
\label{eq:SpeedMin}
\norm{\dynfield(x)}_{x}
	= \max_{z \in \tcone_\strat(x)\cap\ball(x)} \product{\payv^{\sharp}(x)}{z}_{x},
\end{equation}
where $\ball(x) = \setdef{z\in\R^{\pures}}{\norm{z}_{x}\leq1}$.

Note now that the correspondence $x \mapsto \tcone_\strat(x)$ is \ac{lsc} because it is constant on the interior of each face of $\strat$ and  $\tcone_{\strat}(x) \subseteq \tcone_{\strat}(y)$ whenever $\supp(x) \subseteq \supp(y)$.
This shows that the constraint correspondence $x \mapsto \tcone_{\strat}(x) \cap \ball(x)$ of \eqref{eq:SpeedMin} is \ac{lsc};
since the objective function $\product{\payv^{\sharp}(x)}{z}_{x}$ of \eqref{eq:SpeedMin} is jointly continuous in $x$ and $z$, a precursor to the maximum theorem \citep[Lemma 16.30]{AB99} implies that $x \mapsto \norm{\dynfield(x)}_{x}$ is itself \ac{lsc}, as claimed.
\end{proof}

\section{Additional proofs}
\label{app:proofs}

In this appendix, we collect some proofs that are too technical for the main text.
We begin with the proof of \cref{prop:wp} regarding the existence and uniqueness of solutions to \eqref{eq:RGD}.
As noted in \cref{sec:analysis}, we only need to prove part \eqref{itm:wp-full}, which concerns the case of full-rank extendable metrics.
Existence of forward solutions of \eqref{eq:RGD} on $\strat$ follows from general results of \cite{AC84} on solutions to discontinuous differential equations;
see \cite{LS08} for a summary of their argument.
Thus, it remains to show that forward solutions to \eqref{eq:RGD} from each initial condition in $\strat$ are unique.
This conclusion follows from the following lemma:

\begin{lemma}
\label{lem:Gronwall}
Let $x(t)$ and $x'(t)$ be solutions to \eqref{eq:RGD}, and let 
\begin{equation}
\label{eq:DefP}
P(t)
	= \norm{x'(t)-x(t)}_{x(t)}^{2} \, e^{-\lambda t}.
\end{equation}
If $\lambda>0$ is large enough, then $P(t)$ is nonincreasing. 
\end{lemma}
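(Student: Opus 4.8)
The plan is to differentiate $P(t)$ and show that, for $\lambda$ chosen large enough in terms of Lipschitz-type constants for $g$, $\payv$, and the induced geometry on $\strat$, we have $\dot P(t) \leq 0$ for almost every $t$. Since both $x(t)$ and $x'(t)$ are absolutely continuous Carathéodory solutions, the function $t \mapsto \norm{x'(t) - x(t)}_{x(t)}^{2}$ is absolutely continuous, and its derivative exists almost everywhere; we compute it by the chain rule, treating the dependence on $x(t)$ through the metric tensor $g(x(t))$ separately from the dependence through the difference $x'(t) - x(t)$. Writing $\Delta(t) = x'(t) - x(t)$ and $Q(t) = \norm{\Delta(t)}_{x(t)}^{2} = \Delta(t)^{\top} g(x(t)) \Delta(t)$, we get
\begin{equation}
\dot Q(t)
	= 2\,\Delta(t)^{\top} g(x(t))\, \dot\Delta(t)
	+ \Delta(t)^{\top} \Big(\tfrac{d}{dt} g(x(t))\Big) \Delta(t),
\end{equation}
and then $\dot P(t) = e^{-\lambda t}\big(\dot Q(t) - \lambda Q(t)\big)$, so it suffices to bound $\dot Q(t) \leq \lambda Q(t)$ with a uniform $\lambda$.

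For the second term, since $g$ is $C^{1}$ (indeed we may use that it is Lipschitz on the compact set $\strat$, or rather on a neighborhood; note the footnote in the excerpt stresses that smoothness of $g$ is used in an essential way for this uniqueness claim), the matrix $\frac{d}{dt} g(x(t))$ is bounded in operator norm by $C\,\norm{\dot x(t)}$, and $\norm{\dot x(t)} = \norm{\dynfield(x(t))}$ is bounded by $\dynfield_{\max}$; combined with equivalence of the norms $\norm{\cdot}_{x}$ and the Euclidean norm uniformly over the compact set, this term is $\leq C_{1} Q(t)$. The real work is the first term: here I would use the variational characterization of $\dynfield$. By definition $\dynfield(x) = \tproj_{x}(\payv^{\sharp}(x))$ is the closest-point projection of $\payv^{\sharp}(x)$ onto $\tcone_{\strat}(x)$ with respect to $\product{\cdot}{\cdot}_{x}$, so by Moreau's decomposition theorem (as invoked already in the proof of Proposition~\ref{prop:PC}), $\payv^{\sharp}(x) - \dynfield(x) \in \ncone_{\strat}(x)$, i.e.
\begin{equation}
\product{\payv^{\sharp}(x) - \dynfield(x)}{z}_{x} \leq 0
	\quad \text{for all } z \in \tcone_{\strat}(x).
\end{equation}
The key inequality to extract is that the projection operator $w \mapsto \tproj_{x}(w)$ is firmly nonexpansive for a \emph{fixed} base point $x$, so if $x = x'$ the map $w \mapsto \dynfield$ would be monotone and the term would be handled by contractiveness-type arguments — but the base points differ and, crucially, the metrics $\product{\cdot}{\cdot}_{x(t)}$ and $\product{\cdot}{\cdot}_{x'(t)}$ and the cones $\tcone_{\strat}(x(t))$, $\tcone_{\strat}(x'(t))$ differ. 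To deal with this, I would evaluate $\Delta^{\top} g(x)(\dot x' - \dot x) = \product{x' - x}{\dynfield(x') - \dynfield(x)}_{x}$ and insert and subtract $\payv^{\sharp}(x)$, $\payv^{\sharp}(x')$: write
\begin{equation}
\product{x' - x}{\dynfield(x') - \dynfield(x)}_{x}
	= \product{x'-x}{\payv^{\sharp}(x') - \payv^{\sharp}(x)}_{x}
	+ \product{x'-x}{(\dynfield(x') - \payv^{\sharp}(x')) - (\dynfield(x) - \payv^{\sharp}(x))}_{x}.
\end{equation}
The first bracket is $\leq C_{2}\,\norm{x'-x}^{2}$ because $\payv$ is Lipschitz and $g^{\sharp}$ is $C^{1}$ (hence $\payv^{\sharp}$ is Lipschitz on the compact set). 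For the second bracket, I would use the normal-cone membership of $w(x) := \dynfield(x) - \payv^{\sharp}(x) = -(\payv^{\sharp}(x) - \dynfield(x))$: we have $-w(x) \in \ncone_{\strat}(x)$ evaluated in the $\product{\cdot}{\cdot}_{x}$ metric, and similarly $-w(x') \in \ncone_{\strat}(x')$ in the $\product{\cdot}{\cdot}_{x'}$ metric. Since $x' - x \in \tcone_{\strat}(x)$ and $x - x' \in \tcone_{\strat}(x')$ (the tangent cone of a polytope at a point contains the direction to any other point of the polytope), we obtain $\product{w(x)}{x' - x}_{x} \leq 0$ and $\product{w(x')}{x - x'}_{x'} \leq 0$. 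The first gives us exactly the control we want for the $x$-term; for the $x'$-term we must convert $\product{\cdot}{\cdot}_{x'}$ to $\product{\cdot}{\cdot}_{x}$, paying an error of size $C_{3}\,\norm{x'-x}\cdot\norm{w(x')}\cdot\norm{x'-x} \leq C_{4}\,\norm{x'-x}^{2}$ since $\norm{w(x')} = \norm{\payv^{\sharp}(x') - \dynfield(x')} = \norm{\dynfield^{\perp}(x')}$ is uniformly bounded. Collecting, $\product{x'-x}{\dynfield(x')-\dynfield(x)}_{x} \leq C_{5}\,\norm{x'-x}^{2} \leq C_{6}\,Q(t)$.

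Putting all the pieces together yields $\dot Q(t) \leq (C_{1} + 2C_{6})\,Q(t)$ for a.e.\ $t$, so choosing $\lambda \geq C_{1} + 2C_{6}$ gives $\dot P(t) \leq 0$ a.e., and absolute continuity of $P$ then gives that $P$ is nonincreasing. (Uniqueness of forward solutions then follows: if $x(0) = x'(0)$ then $P(0) = 0$, so $P(t) \equiv 0$, hence $x(t) = x'(t)$ for all $t \geq 0$.) The main obstacle is precisely the step where the two solutions sit at different base points with different metrics and different tangent/normal cones — one cannot simply invoke monotonicity of a single projected vector field, and must instead carefully account for the geometry varying with the state, using the uniform equivalence of the norms $\norm{\cdot}_{x}$ on the compact simplex and the $C^{1}$ (or Lipschitz) dependence of $g$ on $x$ to absorb all cross-terms into a $Q(t)$-proportional bound. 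This is exactly where, as the paper's footnote warns, smoothness of $g$ enters essentially.
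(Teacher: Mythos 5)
Your proposal follows essentially the same route as the paper: differentiate $P$, bound the contribution of $\tfrac{d}{dt}\,g(x(t))$ via the $C^{1}$ regularity of $g$ and the bounded speed, and establish a one-sided Lipschitz estimate for $\dynfield$ by splitting off $\payv^{\sharp}$ and using Moreau's decomposition at both base points together with $x'-x\in\tcone_{\strat}(x)$ and $x-x'\in\tcone_{\strat}(x')$, absorbing the change-of-metric error into a $\norm{x'-x}_{x}^{2}$ term \textendash\ this is exactly the paper's Lemma~\ref{lem:1SL} and its use in the proof of Lemma~\ref{lem:Gronwall}. One bookkeeping slip: with your definition $w(x)=\dynfield(x)-\payv^{\sharp}(x)$, the normal-cone memberships give $\product{w(x)}{x'-x}_{x}\geq 0$ and $\product{w(x')}{x-x'}_{x'}\geq 0$ (not $\leq 0$ as written), and these corrected signs are precisely the inequalities your subsequent steps actually use.
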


Given this lemma, we immediately obtain:
\begin{proof}[Proof of \cref{prop:wp}]
Let $x(t)$ and $x'(t)$ be solutions of \eqref{eq:RGD} with $x(0)= x'(0)$.
We then get $P(t) = P(0) = 0$ for all $t\geq 0$, so $x(t)= x'(t)$ for all $t \geq 0$.
\end{proof}

To prove \cref{lem:Gronwall}, we need one final auxiliary result.
Let $\dynfield(x) = \tproj_{x}(\payv^{\sharp}(x))$ denote the \acl{RHS} of \eqref{eq:RGD}.
Then, as we show below, $\dynfield$ satisfies a one-sided Lipschitz condition with respect to the underlying metric:

\begin{lemma}
\label{lem:1SL}
There exists some $K_{\dynfield} >0$ such that 
\begin{equation}
\label{eq:1SL}
\product{\dynfield(x')-\dynfield(x)}{x'-x}_{x}
	\leq K_{\dynfield} \norm{x'-x}_{x}^{2}
	\quad
	\text{for all $x, x' \in \strat$}.
\end{equation}
\end{lemma}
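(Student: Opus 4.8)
The plan is to establish the one-sided Lipschitz estimate \eqref{eq:1SL} by decomposing the difference $\dynfield(x')-\dynfield(x)$ into a part that comes from the $x$-dependence of the projection $\tproj_x$ and a part that comes from the $x$-dependence of the sharp operator $g^\sharp$, and then to use two structural facts: (i) for fixed $x$, the projection $\tproj_x$ onto the convex cone $\tcone_\strat(x)$ is firmly nonexpansive with respect to $\product{\argdot}{\argdot}_x$, which is the standard obtuse-angle inequality from Moreau's decomposition theorem (already invoked in the proof of Proposition \ref{prop:PC}); and (ii) the data $x\mapsto \payv^\sharp(x)$, $x\mapsto g^\sharp(x)$, and $x\mapsto g(x)$ are all $C^1$ (hence Lipschitz) on the compact set $\strat$, since $g$ is $C^1$ and full-rank extendable so $g^\sharp$ is $C^1$ up to the boundary.

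First I would reduce to comparing projections \emph{at a common base point}. Fix $x,x'\in\strat$. Write $w = \payv^\sharp(x)$, $w' = \payv^\sharp(x')$, $z=\dynfield(x)=\tproj_x(w)$, $z'=\dynfield(x')=\tproj_{x'}(w')$. The obstacle is that $z$ and $z'$ are projections onto \emph{different} cones using \emph{different} inner products, so I cannot directly apply the nonexpansiveness of a single projection. The key observation is that in the full-rank case $\metcone(x)=\tcone_\strat(x)$ depends on $x$ only through $\supp(x)$, and — crucially for the one-sided estimate — we only need to control the inner product $\product{z'-z}{x'-x}_x$, not $\norm{z'-z}_x$. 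Using Moreau's decomposition at $x$: since $z=\tproj_x(w)$, the vector $w-z$ lies in the normal cone $\ncone_\strat(x)$, so $\product{w-z}{\zeta - z}_x \le 0$ for all $\zeta\in\tcone_\strat(x)$; taking $\zeta = z'$ (which lies in $\tcone_\strat(x')\supseteq$ or relates to $\tcone_\strat(x)$ after a small correction term for strategies newly in the support) gives $\product{z'-z}{z'-z}_x \le \product{w'-w}{z'-z}_x + (\text{base-point correction})$. I would then handle the base-point correction — the difference between $\product{\argdot}{\argdot}_x$ and $\product{\argdot}{\argdot}_{x'}$, and the difference between the cones — by Lipschitz continuity of $g$, bounding it by $C\norm{x'-x}_x \cdot \norm{z'-z}_x$, absorbing into the left side, and using that $z,z'$ are uniformly bounded (so $\norm{z'-z}_x \le C'$ and the correction is $\le C''\norm{x'-x}_x^2$ after a Cauchy--Schwarz / Young split).

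Next, combining the projection inequality with the chain $\product{z'-z}{x'-x}_x = \product{z'-z}{z'-z}_x^{1/2}\cdot(\cdots)$ is not quite how I want to proceed; instead I would directly estimate
\[
\product{\dynfield(x')-\dynfield(x)}{x'-x}_x
= \product{z'-z}{x'-x}_x,
\]
split $x'-x = (x'-\base_0) - (x-\base_0)$ is not needed; rather, use that $\product{w-z}{\zeta}_x\le 0$ for $\zeta\in\tcone_\strat(x)$ combined with $x'-x\in\zspace^\act$ (tangent to the simplex) to write, schematically,
\[
\product{z'-z}{x'-x}_x
\le \product{w'-w}{x'-x}_x + \text{(base-point and cone corrections)}.
\]
The term $\product{w'-w}{x'-x}_x = \product{\payv^\sharp(x')-\payv^\sharp(x)}{x'-x}_x$ is $O(\norm{x'-x}_x^2)$ by Lipschitz continuity of $x\mapsto\payv^\sharp(x)$ and equivalence of the norms $\norm{\argdot}_x$ (uniform on the compact set $\strat$). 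The corrections are likewise $O(\norm{x'-x}_x^2)$ by Lipschitz continuity of $g$ and $g^\sharp$. Summing, \eqref{eq:1SL} holds with $K_\dynfield$ the sum of these Lipschitz/equivalence constants.

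The main obstacle I expect is the careful treatment of the cone mismatch $\tcone_\strat(x)$ vs.\ $\tcone_\strat(x')$ when $\supp(x)\ne\supp(x')$: when $x'$ has strictly larger support, $z'$ may have components on strategies where $\tcone_\strat(x)$ imposes a nonnegativity constraint, so $z'\notin\tcone_\strat(x)$ in general and one cannot naively plug $\zeta=z'$ into the Moreau inequality. I would resolve this either by a limiting argument (approximating $z'$ by its feasible truncation and controlling the error by $\norm{x'-x}$, since a strategy's weight being $0$ at $x$ but positive at $x'$ forces $|x'_\alpha - x_\alpha|$ to control that component) or by working with the Euclidean-geometry reformulation via pseudoinverses from Appendix \ref{app:geometry} and the known one-sided Lipschitz theory for projected dynamical systems, adapted to a varying metric. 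This is precisely the place where, as the authors note in the main text, the argument is "considerably harder than the corresponding result of \cite{LS08}" because the metric is not constant.
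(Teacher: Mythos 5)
Your second, ``direct'' estimate is indeed the route the paper takes, but as sketched it leaves a gap at the decisive step. Write $w(x)=\payv^{\sharp}(x)$ and $w^{\perp}(x)=w(x)-\tproj_{x}(w(x))$, and expand $\product{\dynfield(x')-\dynfield(x)}{x'-x}_{x}$ into $\product{w(x')-w(x)}{x'-x}_{x}+\product{w^{\perp}(x)}{x'-x}_{x}-\product{w^{\perp}(x')}{x'-x}_{x}$. The first term is $O(\norm{x'-x}_{x}^{2})$ by Cauchy\textendash Schwarz and Lipschitz continuity of $w$, as you say; the second is $\leq 0$ because $w^{\perp}(x)\in\ncone_{\strat}(x)$ and $x'-x\in\tcone_{\strat}(x)$ by convexity of $\strat$ (this is the fact you need \textendash\ membership in $\zspace^{\act}$ alone does not suffice at boundary points). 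The gap is the third term: it is \emph{not} $O(\norm{x'-x}_{x}^{2})$ ``by Lipschitz continuity of $g$ and $g^{\sharp}$''; generically it is only of order $\norm{x'-x}_{x}$, with a coefficient $\norm{w^{\perp}(x')}$ that does not vanish as $x'\to x$. The paper extracts the quadratic bound by using sign information at the \emph{other} base point: $-\product{w^{\perp}(x')}{x'-x}_{x}=\product{w^{\perp}(x')}{x-x'}_{x'}+(w^{\perp}(x'))^{\ttop}(g(x')-g(x))(x'-x)$, where the first summand is $\leq 0$ by Moreau's theorem at $x'$ (since $x-x'\in\tcone_{\strat}(x')$, again by convexity), and only the metric-variation remainder is quadratic, by $C^{1}$-smoothness of $g$ and boundedness of $w^{\perp}$. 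Note that once you pair against $x'-x$ and $x-x'$ there is no cone mismatch left to handle at all, so the ``main obstacle'' of your closing paragraph is moot for this route; what is genuinely needed, and missing from your sketch, is this second normal-cone inequality evaluated in the $x'$-metric.

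By contrast, the first route you sketch (plugging $\zeta=z'$ into the variational inequality at $x$, aiming at $\norm{z'-z}_{x}^{2}\leq\product{w'-w}{z'-z}_{x}+\text{corrections}$) cannot be repaired, and the truncation fix you propose rests on a false claim. Any bound of that shape with corrections controlled by $\norm{x'-x}$ would make $\dynfield$ Lipschitz, whereas in the full-rank case $\dynfield$ is genuinely discontinuous at $\bd(\strat)$. Concretely, for the Euclidean projection dynamics with two strategies and $\payv\equiv(1,0)$, take $x=\bvec_{1}$ and $x'=(1-\eps,\eps)$: then $w'=w$, $\dynfield(x)=0$ and $\dynfield(x')=(1/2,-1/2)$, so the cone-mismatch ``correction'' is of order one while $\norm{x'-x}\to 0$; the same example shows that $x_{\alpha}=0<x'_{\alpha}$ small does not force the $\alpha$-component of $\dynfield(x')$ to be small. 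Only the one-sided pairing with $x'-x$, completed with the base-point switch described above, yields \eqref{eq:1SL}.
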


\begin{proof}[Proof of \cref{lem:1SL}]
Write $w(x)= \payv^\sharp(x)$ and $w^\perp(x) = w(x) - \tproj_{x}(w(x))$.
Since $g$ is full-rank extendable, $\tproj_{x}$ is the orthogonal projection onto $\tcone_{\strat}(x)$ with respect to $\product{\argdot}{\argdot}_{x}$.
We thus obtain
\begin{flalign}
\label{eq:1SLpf1}
\langle\dynfield(x') - \dynfield(x),x'-x\rangle_{x}
	&= \product{w(x') - w(x)}{x' - x}_{x} - \product{w^\perp(x') - w^\perp(x)}{x' - x}_{x}
	\notag\\
	&= \product{w(x') - w(x)}{x' - x}_{x}
	+ \product{w^{\perp}(x)}{x' - x}_{x}
	\notag\\
	&+ \product{w^{\perp}(x')}{x - x'}_{x'}
	+ (w^{\perp}(x'))^\ttop(g(x') - g(x))(x' - x)\notag\\
	&\leq K_{w} \norm{x' - x}^2_{x} + (w^{\perp}(x'))^\ttop(g(x') - g(x))(x' - x),
\end{flalign}
where the bound for the first term in the last line follows from the Cauchy-Schwarz inequality and the Lipschitz continuity of $w$, while the rest follows from Moreau's decomposition theorem.
To bound the last term, write $g_{\pure}(x)$ for the $\pure$-th row of $g(x)$, let $W_{\max}^{\perp} = \max_{\pure\in\pures}\max_{x\in\strat} w_{\pure}^{\perp}(x)$, and let $\norm{\cdot}_{2}$ denote the standard Euclidean norm.
Then, if $C>0$ is chosen sufficiently large, we get
\begin{flalign}
\label{eq:1SLpf2}
(w^{\perp}(x'))^{\ttop}
	&(g(x') - g(x)) (x' - x)
	\notag\\
	&\leq \insum_{\pure\in\pures} w_{\pure}^{\perp}(x') \norm{g_\pure(x') - g_\pure(x)}_{2} \norm{x'-x}_{2}
	\notag\\
	&\leq W_{\max}^{\perp} \norm{x' - x}_{2} \insum_{\pure\in\pures} \norm{g_\pure(x') - g_\pure(x)}_{2}
	\leq W_{\max}^{\perp} C \norm{x' - x}_{x}^{2}.
\end{flalign}
In the above, the first inequality is an immediate corollary of the Cauchy-Schwarz inequality;
the last one follows from the equivalence of norms on $\R^{\pures}$ and the fact that $g$ is $C^{1}$ on $\strat$;
finally, $C$ can be chosen independently of $x$ and $x'$ because $\strat$ is compact.
Combining \eqref{eq:1SLpf1} and \eqref{eq:1SLpf2} completes our proof.
\end{proof}

With \cref{lem:1SL} at hand, we finally obtain:

\begin{proof}[Proof of \cref{lem:Gronwall}]
Define $\dot g(x) = (\dot g_{\pure\purealt}(x))_{\pure,\purealt\in\pures}$ by $\dot g_{\pure\purealt}(x) = \braket{Dg_{\pure\purealt}(x)}{\dynfield(x)} = \insum_{\kappa\in\pures} \dynfield_{\kappa}(x) \, \pd_{\kappa} g_{\pure\purealt}(x)$, and let $K_{g} = \max_{\pure,\purealt\in\pures}\max_{x\in\strat} \dot g_{\pure\purealt}(x) < \infty$ (recall that $g$ is $C^{1}$).
Then, for all $t \geq 0$ such that $x(t)$ and $x'(t)$ are differentiable, we have
\begin{flalign}
\dot P
	&= 2 \product{\dynfield(x') - \dynfield(x)}{x' - x}_{x} \,e^{-\lambda t}
	+(x' - x)^\ttop \dot g(x) (x' - x) \, e^{-\lambda t}
	- \lambda \norm{x' - x}_{x}^{2} \, e^{-\lambda t}
	\notag\\
	&\leq -(\lambda - 2 K_{\dynfield} - K_{g}) \,\norm{x'-x}^2_{x}\,\, e^{-\lambda t},
\end{flalign}
where we used \cref{lem:1SL} to bound the second term in the first line.
Taking $\lambda > 2K_{\dynfield} + K_{g}$ then yields $\dot P \leq 0$;
since $x(t)$ and $x'(t)$ are absolutely continuous, we conclude that $P(t)$ is nondecreasing.
\end{proof}

We close this appendix with the proof of our permanence criterion:

\begin{proof}[Proof of \cref{prop:perm}]
Define $P\from\clorthant\to\R$ as $P(x) = -\exp\left(\sum_{\pure} p_{\pure} \breg(\bvec_{\pure}, x)\right)$ for $x\in\orthant$ and $P(x) = 0$ for $x\in\bd(\clorthant)$.
The steepness of $h$ implies that $P$ is continuous, while \cref{lem:Bregman-grad} implies that $\frac{d}{dt}\log(P(x)) = \Psi(x) \equiv \braket{\payv(x)}{p - x}$ for all $x \in \intstrat$.
Hence, by Theorem 12.2.1 of \cite{HS98}, it suffices to show that the function $\Psi$ is an average Lyapunov function for \eqref{eq:HD}, meaning that, for every initial condition $x(0)\in\bd(\strat)$, there is a $t>0$ such that
\begin{equation}
\label{eq:Lyap-avg}
\frac{1}{t}\int_{0}^{t} \Psi(x(s)) \dd s
	> 0.
\end{equation}

We proceed by induction on the cardinality of the support of the initial condition.
The claim is trivial if this cardinality is $1$.
For the inductive step, suppose that \eqref{eq:Lyap-avg} holds when the cardinality is $k \in \{1, \ldots , \abs{\pures}-2\}$, and consider an initial condition $x(0)$ whose support $\pures'$ has cardinality $k +1$.
If $x(t)$ converges to the boundary of the face $\strat'$ of $\strat$ spanned by $\pures'$, then our claim follows from the inductive hypothesis and the same arguments as in the proof of Theorem 12.2.2 in \cite{HS98}.
If instead $x(t)$ does not converge to the boundary of $\strat'$, then there exists a $\delta>0$ and an increasing sequence of times $t_{n}\uparrow\infty$ with $x_{\pure}(t_{n}) \geq \delta > 0$ for all $\pure\in\pures'$.
Then, letting $\bar x_{\pure}(t) = t^{-1} \int_{0}^{t} x_{\pure}(s) \dd s$ and $\bar u(t) = t^{-1} \int_{0}^{t} \braket{\payv(x(s))}{x(s)} \dd s$, we may assume (by descending to a subsequence of $t_{n}$ if necessary) that $\bar x(t_{n})$ and $\bar u(t_{n})$ converge to some $\bareq$ and $\bar u^{\ast}$ respectively as $n\to\infty$.

We now claim that $\payv_{\pure}(\bareq) = \bar u^{\ast}$ for all $\pure\in\pures'$, implying that $\bareq$ is a restricted equilibrium of $\game$.
Indeed, let $\legendre_{\pure}(t) = \breg(\bvec_{\pure},x(t))$ for all $\pure\in\pures'$.
Then, $\dot \legendre_{\pure} = \braket{\payv(x)}{x} - \payv_{\pure}(x)$ by \cref{lem:Bregman-grad}, so the linearity of $\payv(x)$ in $x$ implies that
\begin{equation}
\label{eq:PermAvgs}
\frac{1}{t} \int_{0}^{t} \braket{\payv(x(s))}{x(s)} \dd s - \payv_{\pure}(\bar x(t)) 
	= \frac{\legendre_{\pure}(t) - \legendre_{\pure}(0)}{t}.
\end{equation}
Given that $x(t_{n})$ remains a minimal positive distance away from $\bd(\strat')$, it follows that $\legendre_{\pure}(t_n)$ is bounded from above for all $\pure\in\pures'$.
Therefore, the \acl{RHS} of \eqref{eq:PermAvgs} vanishes as $t_{n}\to\infty$, implying in turn that $\payv_{\pure}(\bareq) = \bar u^{\ast}$ for all $\pure\in\pures'$, as claimed.

Now, since $\bareq$ is a restricted equilibrium of $\game$, \cref{prop:stationary} implies that it is a boundary rest point of \eqref{eq:HD}, so $\bar u^{\ast} = \braket{\payv(\bareq)}{\bareq} < \braket{\payv(\bareq)}{p}$ by \eqref{eq:perm-cond}.
Moreover, since $\Psi(x) = \braket{\payv(x)}{p - x}$, setting $t=t_{n}$ in \eqref{eq:Lyap-avg} yields
\begin{equation}
t_{n}^{-1}\int_{0}^{t_{n}} \Psi(x(s)) \dd s
	= t_{n}^{-1} \int_{0}^{t_{n}} \braket{\payv(x(s))}{p - x(s)} \dd s
	= \braket{\payv(\bar x(t_{n}))}{p} - \bar u(t_{n})
\end{equation}
so $\lim_{n\to\infty} t_{n}^{-1} \int_{0}^{t_{n}} \Psi(x(s)) \dd s = \braket{\payv(\bareq)}{p} - \bar u^{\ast} > 0$.
This establishes \eqref{eq:Lyap-avg} for large enough $t = t_{n}$, completing our proof.
\end{proof}


\bibliographystyle{apalike}
\bibliography{Bibliography-GD}

\end{document}